\documentclass[12pt]{extarticle}
\usepackage{amsmath, amsthm, amssymb, mathtools, hyperref, color}
\usepackage[shortlabels]{enumitem}
\usepackage{graphicx}
\usepackage{adjustbox}
\usepackage{xspace}
\usepackage{lmodern}
\usepackage{cleveref}
\usepackage{hyperref}
\usepackage{tikz}
\usepackage{multirow}
\usepackage{booktabs}
\usepackage{array}
\usepackage{tabularx}
\usepackage{fancyvrb}
\usetikzlibrary{arrows,calc,fit,matrix,positioning,cd}
\usepackage{diagbox}
\usepackage{comment}
\usepackage{float}
\usepackage{xfrac}

\definecolor{cb-yellow}{RGB}{221,170,51}
\definecolor{cb-red} {RGB}{187,85,102}
\definecolor{cb-green}{RGB}{17,119,51}

\hypersetup{
    colorlinks=true,
    linkcolor=cb-red,
    filecolor=cb-red,      
    urlcolor=cb-green,
    citecolor=cb-green,
}

\usepackage{todonotes}

\oddsidemargin \evensidemargin
\newtheorem{theorem}{Theorem}
\newtheorem*{theorem*}{Theorem}
\numberwithin{theorem}{section}
\newtheorem{proposition}[theorem]{Proposition}
\newtheorem{lemma}[theorem]{Lemma}
\newtheorem{corollary}[theorem]{Corollary}

\newtheorem{definition}{Definition}[section]

\newtheorem{remark}[theorem]{Remark}
\newtheorem{example}[theorem]{Example}

\newcommand{\RR}{\mathbb{R}}

\newcommand{\ZZ}{\mathbb{Z}}
\newcommand{\NN}{\mathbb{N}}

\newcommand{\bmf}[1]{\mathbf{#1}}

\newcolumntype{P}[1]{>{\centering\arraybackslash}p{#1}}
\newcolumntype{C}[1]{>{\centering\arraybackslash}p{#1}}

\usepackage{etoolbox}
\patchcmd{\thebibliography}
  {\settowidth}
  {\setlength{\itemsep}{0pt plus 0.1pt}\settowidth}
  {}{}
\apptocmd{\thebibliography}
  {\small}
  {}{}

\date{}

\title{\textbf{Likelihood Geometry of Moving Average \\ and Autoregressive Processes}}
\author{Carlos Am\'endola \and Gabriel Riffo}

\begin{document}
\maketitle

\begin{abstract}
We study the problem of maximum likelihood estimation for moving average (MA) time series models from the perspective of algebraic statistics, with a focus on the structure and number of solutions of their likelihood equations. Of particular interest is to classify the critical points that lead to non-invertible models. We consider the composite likelihood as an alternative estimation method and analyze its critical points. We extend our algebraic analysis to autoregressive processes (AR). We provide algebraic closed form formulas for the parameters when possible. We also explore in simulations how methods from numerical algebraic geometry perform against traditional optimization for these models. 
\end{abstract}

\section{Introduction}

Moving average (MA) and autoregressive (AR) models play a central role in time series analysis due to their wide range of applications, including modeling financial returns \cite{box2015time}, analyzing climate and environmental data \cite{wilks2011statistical}, and processing engineering signals \cite{OppenheimSchaferBuck1999}. From an algebraic perspective, \cite{autocovariance} introduced the concept of \emph{autocovariance varieties} for MA processes, which represent all possible autocovariance vectors generated by a process of fixed order. This algebraic-geometric viewpoint provides a natural framework for studying parameter identifiability, analyzing the structure of critical points, and enumerating solutions to the maximum likelihood equations. Similarly, AR processes can be analyzed using their autocovariance structure and characteristic polynomials, allowing for a unified algebraic treatment of parameter estimation problems.

\begin{definition}
    Let $q \in \NN$. A \emph{moving average process of order} $q$, denoted by \emph{MA($q$)}, is defined as a sequence of random variables $(Y_t)_{t \in \mathbb{Z}}$ given by
\begin{equation*}
Y_t = \sum_{k=0}^{q} a_k \, Z_{t-k},
\end{equation*}
where $a_k \in \mathbb{R}$ and $(Z_t)_{t \in \mathbb{Z}}$ are i.i.d standard normal random variables. 
\end{definition}

In other words, each observation $Y_t$ is a linear combination of $q+1$ finitely many innovations. The \emph{autocovariance function} of the process, $\gamma: \ZZ \rightarrow \RR$ is
\begin{equation*}
\gamma(h) :=  \mathrm{Cov}(Y_{t+h},Y_t)= \mathrm{Cov}(Y_{h},Y_0)= \begin{cases}
    \sum_{k=0}^{q-|h|} a_k a_{k+h} \quad &\text{if }  0 \leq |h| \le q \\ 
    \qquad  0 \quad  &\text{if }  |h|> q.
\end{cases}
\end{equation*}
Formally, the process is said to be \emph{invertible} if it admits a representation 
as an autoregressive process of infinite order. To characterize invertibility, 
one can associate to the MA($q$) process the characteristic polynomial
\begin{equation}
\Theta(x) = \sum_{k=0}^{q} a_k x^k, \label{eq:charpoly}
\end{equation}
whose roots $z_1,\dots,z_q$ determine the invertibility of the process via the 
spectral factorization identity
\begin{equation*}
\sigma^2 \Theta(x)\Theta(x^{-1}) = \sum_{t \in \mathbb{Z}} \gamma(t)\, x^t.
\end{equation*}
Since $\Theta(x)\Theta(x^{-1})$ is invariant under the substitution 
$z_i \mapsto z_i^{-1}$, any root configuration yields an equivalent autocovariance 
structure. The process is invertible under the $a_i$ parametrization if all roots lie outside the unit 
disk, i.e., $|z_i| > 1$ for all $i$. The process still admits an invertible representation as long as all roots satisfy $|z_i|\neq 1$. Consequently, the process is non-invertible if and only if at least one root satisfies 
$|z_i| = 1$.  

Autoregressive processes are defined as follows.

\begin{definition}\label{Def:Autoregressive}
    Let $p \in \NN$. An \emph{autoregressive process of order} $p$, denoted by \emph{AR($p$)}, is defined as a sequence of random variables $(X_t)_{t \in \mathbb{Z}}$ given by
\begin{equation}
X_t = \sum_{i=1}^{p} \phi_i X_{t-i} + \sigma Z_t,   \label{Eq:autoregresive_process}
\end{equation}
where $\phi_i \in \mathbb{R}$, $\sigma>0$ and $(Z_t)_{t\in \ZZ}$ are i.i.d standard normal random variables. 
\end{definition}
Each $X_t$ depends linearly on its $p$ previous values, and its autocovariance function satisfies the classical \emph{Yule--Walker equations} \cite{brockwell2009time}, which provide a direct link between the parameters $\phi = (\phi_1, \dots, \phi_p)$ and the covariance structure. The process is always causal (invertible), but some parameter choices make the process non-stationary \cite{brockwell2009time}.

In both settings, estimation of the parameters from observed data is a central problem. Let $\mathbf{y} := (y_1, \dots, y_n)^\top$ and $\mathbf{x} := (x_1, \dots, x_n)^\top$ denote finite samples for $t=1,2,\dots,n$ from MA($q$) and AR($p$) processes, respectively. Since the innovations $(Z_t)_{t\in \mathbb{Z}}$ are Gaussian, the random samples $Y=(Y_1,\dots,Y_n)$ and $X=(X_1,\dots,X_n)$ are multivariate Gaussian with covariance matrices $\Sigma(a)$ and $\Sigma(\phi)$ that depend on the model parameters. The corresponding log-likelihood functions are
\begin{align}
\ell_{\text{MA}}(a) = - \log \left(\det\left(\Sigma(a)\right)\right) -  \bmf{y}^\top \Sigma(a)^{-1} \bmf{y} , \label{eq:likMA}\\
\ell_{\text{AR}}(\phi) = - \log \left( \det \left(\Sigma(\phi)\right) \right) - \bmf{x}^\top \Sigma(\phi)^{-1} \bmf{x} , \label{eq:likAR}
\end{align}
and the maximum likelihood estimates (MLEs) are defined as
\begin{align}
\hat{a}_n := \arg \max_{a \in \mathcal{A}} \Big\{  - \log \left(\det\left(\Sigma(a)\right)\right) -  \bmf{y}^\top \Sigma(a)^{-1} \bmf{y} \Big\}, \label{eq:loglikMA} \\
\hat{\phi}_n := \arg \max_{\phi \in \mathcal{F}} \Big\{ - \log \left( \det \left(\Sigma(\phi)\right) \right) - \bmf{x}^\top \Sigma(\phi)^{-1} \bmf{x} \Big\}, \label{eq:loglikAR}
\end{align}
where $\mathcal{A} \subset \mathbb{R}^{q+1}$ and $\mathcal{F} \subset \mathbb{R}^{p}$ are compact parameter spaces. Even for moderate orders $q$ and $p$, the inverse covariance matrices $\Sigma^{-1}$ are typically dense, making the analysis of likelihoods and their critical points both theoretically challenging and computationally demanding. 

The algebraic complexity of maximum likelihood estimation is measured by the \emph{maximum likelihood degree} (ML-degree). 
The \emph{(parametric) maximum likelihood degree} \cite[Definition 2.4]{drton2009lectures} is defined as the number of complex solutions of the likelihood equations obtained by setting the gradient of the log-likelihood with respect to the model parameters equal to zero, for generic data. When the statistical model is described implicitly as an algebraic variety, the \emph{(implicit) maximum likelihood degree} \cite[Corollary 2.2.5]{drton2009lectures} is defined as the number of critical points of the likelihood function restricted to the model, for generic data. 

In this work, we build on the algebraic statistics \cite{sullivant2023algebraic} perspective  of \cite{autocovariance} to study the likelihood problem for MA($q$) models, and we extend this approach to AR($p$) models. The maximum likelihood estimate will typically be a critical point of the maximum likelihood function, so we aim to study the set of all critical points. To compute and analyze them, we use computational algebra tools. These include symbolic computation via Gr\"obner bases and numerical algebraic geometry tools provided by \cite{HomotopyContinuation.jl} and \cite{brysiewicz2025pandora}, which allow for a systematic exploration of all complex critical points in the likelihood landscape.

The paper is organized as follows. In Section~\ref{Sec:MA1}, we analyze the MA($1$) model, proving the conjecture in \cite[Conjecture 5.7]{autocovariance} regarding its maximum likelihood degree (Theorem~\ref{teo:conjetura_amendola}) and providing a complete characterization of its non-invertible solutions (Theorem~\ref{teo:MA1noinvertible}). This extends the partial characterizations presented in \cite[Proposition 5.5 and Example 5.6]{autocovariance} to arbitrary sample sizes, enabling the direct computation of all solutions. This analysis is then extended to the MA($2$) model, where the set of non-invertible solutions exhibit a richer structure. We identify the different types of non-invertible solutions in Theorem~\ref{teo:soluciones_noinv_ma2}. In Section~\ref{Section:maximo_no_invertible}, we study the probability that a non-invertible point corresponds to a maximum, providing both a closed-form expression of the relevant Hessian matrix and Monte-Carlo estimation. In Section~\ref{Section:MA(3)} we extend this analysis to the MA($3$) model to calculate the maximum likelihood degree. Section~\ref{sec:Composite_Likehood} introduces composite likelihood methods, emphasizing their computational advantages and computing the associated algebraic degree. Finally, Section~\ref{sec:autoregresive} extends these techniques to the AR($p$) process, analyzing its likelihood function, and computing its maximum likelihood degree. Throughout, in Section \ref{section:Estimation_MA_process}, Section \ref{sec:Estimation:Composite} and Section \ref{Estimation:AR} we  complement our theoretical results by conducting computational experiments comparing our estimation methodology with a classical optimization approach.  

The code accompanying this paper is available at:
\begin{center}
\url{https://github.com/GabrielRiffoJ/Time_Series_Likehood_Geometry}
\end{center}

\section{Model \texorpdfstring{MA($1$)}{MA(1)}}\label{Sec:MA1}

We begin by addressing the simplest case arising from the general MA($q$) maximum likelihood problem: the MA($1$) process. The autocovariance function of an MA($1$) is known in closed form and vanishes after lag one. Its autocovariance matrix $\Sigma$ takes the simplified form: 

\begin{equation}
\Sigma =
\sigma^2
\begin{pmatrix}
a_1^2+a_0^2 & a_0a_1 & 0 & \cdots & 0 \\
a_0a_1 & a_1^2+a_0^2 & a_0a_1 & \cdots & 0 \\
0 & a_0a_1 & a_1^2+a_0^2 & \cdots & 0 \\
\vdots & \vdots & \vdots & \ddots & \vdots \\
0 & 0 & 0 & \cdots & a_1^2+a_0^2
\end{pmatrix}
\label{eq:Sigma1}
\end{equation}

However, the inverse covariance matrix~$\Sigma^{-1}$ does not admit such a nice representation for arbitrary $n$. In particular, it is no longer sparse.

In Table~\ref{tab:Ma1}, we present the results of evaluating the parametric ML degree for the parameters $(a_0,a_1)$ and the implicit ML degree in $(\gamma_0,\gamma_1)$ coordinates, counting complex critical points of the score equations in each case via symbolic–numeric methods. 

\begin{table}[H]
    \centering
    \begin{tabular}{|c|ccccccccc|}
\hline
Sample Size $n$ & 2 & 3 & 4 & 5 & 6 & 7 & 8 & 9 & 10 \\
\hline
Parametric ML degree & 8 & 16 & 24 & 32 & 40 & 48 & 56 & 64 & 72 \\
Implicit ML degree & 1 & 3 & 5 & 7 & 9 & 11 & 13 & 15 & 17 \\
\hline
\end{tabular}
    \caption{ML degrees of a $\text{MA($1$)}$ process.}
    \label{tab:Ma1}
\end{table}

The table suggests that the parametric ML degree of the MA($1$) model is $8(n-1)$. Indeed, the ML degree of the MA($1$) model was conjectured to be $4(n-1)$ when restricted to solutions $(a_0,a_1)$ where $\mathrm{Re}(a_0)>0$, see \cite[Conjecture 5.7]{autocovariance}.

We will now present a proof of this conjecture. Our argument exploits the Toeplitz structure of the covariance matrix. We point out that such strategy was also used in \cite[Section 3]{anderson1986noninvertible} to study maximum likelihood estimation for MA($1$), and the result could be derived by using the computations therein.

\begin{theorem}\label{teo:conjetura_amendola}{(\cite[Conjecture 5.7]{autocovariance})} 
For $n>1$, the parametric maximum likelihood degree of the \emph{MA$(1)$} model equals $8(n-1)$. 
\end{theorem}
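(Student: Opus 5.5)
The plan is to diagonalize $\Sigma$ simultaneously for all parameter values, compute the implicit ML degree in $(\gamma_0,\gamma_1)$ coordinates, and then lift to $(a_0,a_1)$ through the map $(a_0,a_1)\mapsto(\gamma_0,\gamma_1)=(a_0^2+a_1^2,\,a_0a_1)$, absorbing $\sigma^2$ into the $a_i$. The target decomposition is $8(n-1)=4(2n-3)+4$: four times the implicit degree $2n-3$ from Table~\ref{tab:Ma1}, plus four extra critical points from the ramification locus of the map.

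\textbf{Step 1 (diagonalization).} The matrix in \eqref{eq:Sigma1} is tridiagonal Toeplitz. Its eigenvectors are the sine vectors $v_k=(\sin(jk\pi/(n+1)))_j$, which do not depend on the parameters, and its eigenvalues are $\lambda_k=\gamma_0+c_k\gamma_1$ with $c_k=2\cos(k\pi/(n+1))$. Setting $w_k=(v_k^\top\mathbf y)^2/\|v_k\|^2$, which are generic positive numbers for generic data, the log-likelihood becomes
\[
\ell=-\sum_{k=1}^n\Big(\log\lambda_k+\frac{w_k}{\lambda_k}\Big).
\]
Write $g_0=\partial\ell/\partial\gamma_0$ and $g_1=\partial\ell/\partial\gamma_1$. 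Then $g_0=\sum_k(w_k-\lambda_k)/\lambda_k^2$ and $g_1=\sum_k c_k(w_k-\lambda_k)/\lambda_k^2$.

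\textbf{Step 2 (implicit degree $2n-3$).} This step is the main obstacle.
\begin{itemize}
\item Use homogeneity. The scaling condition $\gamma_0g_0+\gamma_1g_1=0$ reads $\sum_k w_k/\lambda_k=n$, which is linear in the overall scale.
\item Put $\gamma_1=r\gamma_0$ and eliminate $\gamma_0$. What remains is a single equation $F(r)=0$: after clearing the denominators $\prod_k(1+rc_k)$, it is a polynomial equation in $r$.
\item Bound the degree of the numerator and show that its leading coefficients cancel. The symmetry $c_{n+1-k}=-c_k$ should be what produces the cancellation, leaving degree exactly $2n-3$.
\item Show, for generic $w$, that no root lies on a pole $1+rc_k=0$, that the roots are simple, and that none has $\gamma_0=0$.
\end{itemize}
If the degree bookkeeping gets messy, the fallback is to check that the generic count equals the count at one explicit choice of $w$, for example one with many $w_k$ equal to zero, combined with a parameter-continuation or discriminant-nonvanishing argument.

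\textbf{Step 3 (lifting to $(a_0,a_1)$).} The Jacobian of the map is $J=\begin{pmatrix}2a_0&2a_1\\ a_1&a_0\end{pmatrix}$, with $\det J=2(a_0^2-a_1^2)$. The parametric score equals $J^\top(g_0,g_1)^\top$.

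Away from $a_0=\pm a_1$, the map is $4{:}1$, with fibres generated by $(a_0,a_1)\mapsto(a_1,a_0)$ and $(a_0,a_1)\mapsto(-a_0,-a_1)$. On these points the critical points are exactly the preimages of implicit critical points. I must check that implicit critical points avoid the branch locus $\gamma_0=\pm2\gamma_1$, which holds generically since the branch locus is a fixed curve. This contributes $4(2n-3)$.

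On $a_0=\epsilon a_1=t$ with $\epsilon=\pm1$, $J^\top$ has rank one, and the score reduces to the single scalar condition $t(2g_0+\epsilon g_1)=0$. This is exactly the derivative of $\ell$ along the line. There $\lambda_k=t^2(2+\epsilon c_k)$, so
\[
\ell=-n\log t^2-\sum_k\log(2+\epsilon c_k)-t^{-2}\sum_k\frac{w_k}{2+\epsilon c_k}.
\]
Its critical points satisfy $t^2=\frac1n\sum_k w_k/(2+\epsilon c_k)$. This gives two values of $t$ per sign $\epsilon$, hence $4$ points in total, all nonzero and distinct generically. I would also check that these points are isolated solutions of the full parametric system, and that $t=0$ and other degenerate points are excluded since $\Sigma$ must be invertible.

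Summing the two contributions gives $4(2n-3)+4=8(n-1)$. The main difficulty is the exact degree count in Step 2; the rest is bookkeeping with the explicit spectrum.
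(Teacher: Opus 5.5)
Your proposal is correct in outline, but it runs in the opposite direction from the paper. The paper never passes to $\gamma$-coordinates. It sets $b=a_1/a_0$ and uses the combination $a_0\,\partial_{a_0}\ell+a_1\,\partial_{a_1}\ell=0$ (your scaling relation) to get $a_0^2=\frac1n\sum_k z_k^2/(b^2+1+2b\cos\frac{k\pi}{n+1})$. It substitutes this into $\partial_{a_0}\ell=0$ and obtains one polynomial in $b$ of degree $4n-3$ with the spurious root $b=0$. That gives $4(n-1)$ values of $b$, each with two signs of $a_0$. The implicit degree $2n-3$ is only deduced afterwards, in the corollary following Theorem~\ref{teo:MA1noinvertible}, by removing the four non-invertible solutions and dividing by the fibre size $4$. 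You do the same elimination downstairs in $r=\gamma_1/\gamma_0=b/(1+b^2)$, with a polynomial of half the degree, and then lift. The paper's roots $b=\pm1$ are your ramification points, and its remaining roots come in pairs $b,1/b$ over your roots $r$. The paper's route is a single computation in which the non-invertible solutions appear automatically. Yours proves the theorem and its corollary at once and explains the decomposition $8(n-1)=4(2n-3)+4$, at the price of the extra genericity checks in Step 3. Pairing the scaling relation with $g_1=0$ also leaves you no spurious root to discard, as long as $\gamma_0\neq0$.

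Two details need fixing.

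\emph{The cancellation in Step 2 does not come from the symmetry.} Write $d_k=1+rc_k$ and $F(r)=n\sum_k c_kw_k/d_k^2-\bigl(\sum_j w_j/d_j\bigr)\bigl(\sum_k c_k/d_k\bigr)$, and let $C=\prod_k c_k$. For even $n$, the coefficient of $r^{2n-2}$ in $\prod_k d_k^2\,F$ is $C^2\bigl(n\sum_k w_k/c_k-\sum_{j,k}w_j/c_j\bigr)$, which is $0$ identically. The symmetry, i.e.\ $\sum_k 1/c_k=0$, only simplifies the next order. There $F(r)=-\frac{n}{r^{3}}\sum_k w_k/c_k^2+O(r^{-4})$, which gives degree exactly $2n-3$. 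For odd $n$ you must treat $c_{(n+1)/2}=0$ separately. Clear only $\prod_{k\neq(n+1)/2}d_k^2$; the leading coefficient is then $-(n-1)\,w_{(n+1)/2}\prod_{k\neq(n+1)/2}c_k^2$, with no cancellation at all. At a pole $d_k=0$ the cleared polynomial equals $(n-1)c_kw_k\prod_{m\neq k}d_m^2\neq0$, so no root sits on a pole.

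\emph{Branch-locus avoidance.} ``The branch locus is a fixed curve'' is not a reason by itself: your own Step 3 exhibits the fixed lines $a_0=\pm a_1$ carrying critical points for every $\mathbf y$. Use instead that $F(\pm\frac12)$ is a linear form in $w$. Its coefficient of $w_k$ is $d_k^{-1}\bigl(nc_k/d_k-\sum_m c_m/d_m\bigr)$. These are not all zero, because $c\mapsto c/(1\pm c/2)$ is injective and the $c_k$ are distinct.

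The same fact settles isolation of the four ramification points. There $g_1=F(\epsilon/2)/(n\gamma_0^2)\neq0$. In the basis $(1,-\epsilon),(1,\epsilon)$ the Hessian of $\ell\circ g$ is diagonal with entries $-4\epsilon g_1$ and $-4n/t^2$, both nonzero.
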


\begin{proof}
Let $\mathbf{y}$ be a sample of size $n$ from an MA($1$) process. Since the matrix $\Sigma$ in \eqref{eq:Sigma1} is a symmetric tridiagonal Toeplitz matrix, its eigenvalues are known \cite{gray2006toeplitz} and given by
\begin{equation*}
\lambda_k = a_0^2 + a_1^2 + 2 a_0 a_1 \cos\left( \frac{k\pi}{n+1} \right),
\qquad k = 1,\dots,n.
\end{equation*}
The corresponding eigenvectors for $1\leq k \leq n$ are
\begin{equation*}
q_k = \left(
\sin\left(\frac{k\pi}{n+1}\right),
\sin\left(\frac{2k\pi}{n+1}\right),
\dots,
\sin\left(\frac{nk\pi}{n+1}\right)
\right)^\top.
\end{equation*}
Let $Q$ be the matrix whose $k$-th column is $q_k$, that is, $Q=(q_1,\dots,q_n)$.
This matrix is orthogonal and does not depend on $a_0$ or $a_1$. Writing $\Sigma = Q D Q^\top$, where $D=\mathrm{diag}(\lambda_1,\dots,\lambda_n)$, the log-likelihood function can be written as
\begin{equation*}
\ell(a_0,a_1)
= \sum_{k=1}^n \left[
\log(\lambda_k) + \frac{z_k^2}{\lambda_k}
\right],
\end{equation*}
where $\bmf{z}=(z_1,\dots,z_n)^\top := Q^\top \bmf{y}$.
Differentiating with respect to $a_0$ and $a_1$, we obtain the likelihood equations:
\begin{align*}
\sum_{k=1}^{n}
\left( 2a_0 + 2a_1 \cos\left( \frac{k\pi}{n+1} \right) \right)
\frac{\lambda_k - z_k^2}{\lambda_k^2} &= 0,\\
\sum_{k=1}^{n}
\left( 2a_1 + 2a_0 \cos\left( \frac{k\pi}{n+1} \right) \right)
\frac{\lambda_k - z_k^2}{\lambda_k^2} &= 0.
\end{align*}
To simplify the system, we can set $b= \frac{a_1}{a_0}$, since $a_0 \neq 0$. Substituting yields
\begin{align*}
\sum_{k=1}^{n}
\left( 1 + b \cos\left( \frac{k\pi}{n+1} \right) \right)
\dfrac{a_0^2\left(b^2+1+2b\cos\left( \frac{k\pi}{n+1} \right)\right) - z_k^2}
{\left( b^2+1+2b\cos\left( \frac{k\pi}{n+1} \right) \right)^2} &= 0,\\
\sum_{k=1}^{n}
\left( b + \cos\left( \frac{k\pi}{n+1} \right) \right)
\dfrac{a_0^2\left(b^2+1+2b\cos\left( \frac{k\pi}{n+1} \right)\right) - z_k^2}
{\left( b^2+1+2b\cos\left( \frac{k\pi}{n+1} \right) \right)^2} &= 0.
\end{align*}
Multiplying the second equation by $b$ and adding both equations, we obtain
\begin{equation*}
\sum_{k=1}^{n}
\dfrac{a_0^2\left(b^2+1+2b\cos\left( \frac{k\pi}{n+1} \right)\right) - z_k^2}
{b^2+1+2b\cos\left( \frac{k\pi}{n+1} \right)} = 0.
\end{equation*}
Solving for $a_0^2$ gives
\begin{equation}
a_0^2 =
\frac{1}{n}\sum_{k=1}^{n}
\dfrac{z_k^2}{b^2+1+2b\cos\left(\frac{k\pi}{n+1}\right)}.\label{eq:a0^2} 
\end{equation}
Evaluating $a_0^2$ in the first equation, we obtain
\begin{align}
\dfrac{1}{n}\sum_{j=1}^{n} \sum_{k=1}^{n} 
& \left( 1 + b \cos\left( \frac{k\pi}{n+1} \right) \right)
\dfrac{z_j^2}{
\left( b^2+1+2b\cos\left( \frac{k\pi}{n+1} \right) \right)
\left(b^2+1+2b\cos\left(\frac{j\pi}{n+1}\right)\right)
} \notag \\
&= \sum_{k=1}^{n} 
\left( 1 + b \cos\left( \frac{k\pi}{n+1} \right) \right)
\dfrac{z_k^2}{\left( b^2+1+2b\cos\left( \frac{k\pi}{n+1} \right) \right)^2}.
\label{Equation:polynomial}
\end{align}
Multiplying the equation by 
$\prod_{k=1}^{n} \left( b^2 + 1 + 2b \cos\!\left( \frac{k\pi}{n+1} \right) \right)^2$
yields a polynomial of degree $4(n-1)+1$ in $b$. Since $b=0$ is a solution of \eqref{Equation:polynomial} but does not correspond to an MA$(1)$ process, there are generically $4(n-1)$ possible values for $b$. Substituting into \eqref{eq:a0^2}, we obtain that for every $b$, there are 2 possible solutions for $a_0$, which in turn determines $a_1$ through $a_1= b \cdot a_0$, so that in total there are $2 \cdot 4(n-1) = 8(n-1)$ solutions for $(a_0,a_1)$. 
since and hence at most $8(n-1)+2$ solutions. 
 admissible solutions for the MA$(1)$ model, which proves the claim.
\end{proof}

\begin{remark}
For identifiability of the \emph{MA$(1)$} model, we could retain solutions in Equation~\eqref{eq:a0^2} for which the real part of $a_0$ is positive. Under this restriction, we obtain the $4(n-1)$ solutions of \cite[Conjecture 5.7]{autocovariance}.   
\end{remark}

Table \ref{tab:Ma1} also clearly suggests that the maximum likelihood degree of MA$(1)$  with $n$ samples equals $2n-3$ when considered implicitly in $(\gamma_0,\gamma_1)$-coordinates. Before proving this statement, we turn to the important concept of non-invertibility.  

In the MA($1$) case, the characteristic polynomial in \eqref{eq:charpoly} takes the form $\Theta(x)=a_0+a_1x$, and thus the unique root $x=-a_0/a_1$ lies in the unit circle precisely when $\left|a_0\right |=\left|a_1\right| $, which corresponds to a boundary configuration where the model fails to admit an invertible representation. This situation is particularly delicate, as the usual asymptotic distribution of the estimators breaks down at $\left|a_0\right| = \left|a_1\right|$, rendering standard inference invalid \cite{plosser1977estimation}. Moreover, it has been observed that the maximum likelihood estimator (MLE) for the MA($1$) model may lie on this boundary and, contrary to earlier beliefs, this happens with positive probability \cite{cryer1981small,anderson1986noninvertible}.

In \cite[Proposition 5.5 and Example 5.6]{autocovariance}, non-invertible solutions were characterized only for $n \leq 3$. Here, we extend this analysis to all sample sizes $n$. We first make the following observation regarding the inverses of symmetric tridiagonal Toeplitz matrices. While a general formula for the inverse of tridiagonal matrices can be expressed 
in terms of Chebyshev polynomials via a recursive procedure 
\cite[Corollary 4.1]{da2001explicit}, we derive here a closed-form expression for two particular matrices. 

\begin{lemma}\label{Lemma:inverse_toeplitz}
Let $S^+$ and $S^-$ be the $n\times n$ matrices 
\begin{equation}
S^+ =
\begin{bmatrix} 
2 & 1 & 0 & \cdots & 0 \\
1 & 2 & 1 & \ddots & \vdots \\
0 & 1 & 2 & \ddots & 0 \\
\vdots & \ddots & \ddots & \ddots & 1 \\
0 & \cdots & 0 & 1 & 2 
\end{bmatrix}, \qquad S^- =
\begin{bmatrix} 
2 & -1 & 0 & \cdots & 0 \\
-1 & 2 & -1 & \ddots & \vdots \\
0 & -1 & 2 & \ddots & 0 \\
\vdots & \ddots & \ddots & \ddots & -1 \\
0 & \cdots & 0 & -1 & 2 
\end{bmatrix}. \label{eq:matrixS}
\end{equation}
The (lower) entries of the symmetric matrices $(S^+)^{-1}$ and $(S^-)^{-1}$ are given explicitly by
\begin{equation}
(S^+)^{-1}_{ij} = (-1)^{i+j} \frac{j(n+1-i)}{n+1}, \quad 
(S^-)^{-1}_{ij} = \frac{j(n+1-i)}{n+1}, \quad i \ge j. \label{eq:invS}
\end{equation}
\end{lemma}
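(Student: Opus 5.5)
The plan is to verify directly that the proposed matrix is a two-sided inverse. Define $T$ by $T_{ij} = \frac{\min(i,j)\,(n+1-\max(i,j))}{n+1}$. This is the symmetric extension of the lower-triangular formula in \eqref{eq:invS}, so it suffices to show $S^- T = I$. Since $S^-$ and $T$ are symmetric, $TS^- = (S^-T)^\top = I$ follows automatically.

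Fix a column index $j$ and view the column $t_i := T_{ij}$ as a function of $i \in \{0,1,\dots,n+1\}$, using the same formula at the endpoints. The formula gives $t_0 = t_{n+1} = 0$, which handles the boundary rows of $S^-$. The $i$-th entry of $S^-T$ is then $-t_{i-1} + 2t_i - t_{i+1}$, the negative discrete second difference of $t$. On each side of $j$ the column is linear in $i$:
\[
t_i = \frac{i(n+1-j)}{n+1} \quad \text{for } i \le j, \qquad t_i = \frac{j(n+1-i)}{n+1} \quad \text{for } i \ge j,
\]
and the two expressions agree at $i=j$. The second difference of a linear sequence vanishes, so every entry with $i \neq j$ is $0$. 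At $i = j$ the second difference equals the jump in slope:
\[
2t_j - t_{j-1} - t_{j+1} = \frac{n+1-j}{n+1} - \left(-\frac{j}{n+1}\right) = 1.
\]
Hence $S^- T = I$, which gives the formula for $(S^-)^{-1}$.

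For $S^+$, let $D = \mathrm{diag}\big((-1)^1, \dots, (-1)^n\big)$. Then $D S^- D = S^+$: the diagonal is unchanged, and each off-diagonal entry is multiplied by $(-1)^{i+j} = -1$ because $|i-j| = 1$. Since $D^{-1} = D$, we get $(S^+)^{-1} = D (S^-)^{-1} D$, whose entries are $(-1)^{i+j} T_{ij}$. This is exactly the stated formula.

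No step is genuinely hard. The only point needing care is the boundary rows $i=1$ and $i=n$. Extending $t$ by the formula to the indices $0$ and $n+1$, where it vanishes, removes that case distinction, including the degenerate cases $j=1$ and $j=n$.
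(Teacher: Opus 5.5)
Your proof is correct and follows the same route as the paper: you check entrywise that the tridiagonal matrix times the candidate inverse is the identity, with off-diagonal entries vanishing because each column is linear on either side of the diagonal, and the diagonal entry equal to the jump in slope. Your version is slightly more complete. The extension $t_0=t_{n+1}=0$ makes explicit why the boundary rows need no separate treatment, which the paper leaves implicit. You also cover the case $i<j$, which the paper omits. Finally, the conjugation $DS^-D=S^+$ derives one formula from the other in one line, where the paper only says ``analogous''.
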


\begin{proof}
We prove the first equality for  $(S^+)^{-1}$; the case of $S^-$ is analogous.
Let $Q$ be the matrix defined by the right hand side values in the first equation of \eqref{eq:invS}. We verify that $S^+ Q = I$. Since $S^+$ is tridiagonal with 
diagonal entries $2$ and off-diagonal entries $1$, we have
\[
(S^+ Q)_{ij} = Q_{i-1,j} + 2Q_{i,j} + Q_{i+1,j},
\]
where boundary terms are omitted when $i=1$ or $i=n$. We consider two cases.

\textit{Case 1: $i > j$.} We have
\begin{align*}
(S^+ Q)_{ij} 
&= (-1)^{i-1+j}\frac{j(n+1-i+1)}{n+1} + 2(-1)^{i+j}\frac{j(n+1-i)}{n+1} 
+ (-1)^{i+1+j}\frac{j(n+1-i-1)}{n+1} \\
&= \frac{(-1)^{i+j}\,j}{n+1}\bigl[-(n+2-i) + 2(n+1-i) - (n-i)\bigr] = 0.
\end{align*}

\textit{Case 2: $i = j$.} Using symmetry $Q_{i-1,i} = Q_{i,i-1}$,
\begin{align*}
(S^+ Q)_{ii} 
&= Q_{i-1,i} + 2Q_{i,i} + Q_{i+1,i} \\
&= (-1)^{2i-1}\frac{(i-1)(n+1-i)}{n+1} + 2(-1)^{2i}\frac{i(n+1-i)}{n+1} 
+ (-1)^{2i+1}\frac{i(n-i)}{n+1} \\
&= \frac{1}{n+1}\bigl[-(i-1)(n+1-i) + 2i(n+1-i) - i(n+1-i-1)\bigr] \\
&= \frac{1}{n+1}\bigl[(n+1-i) + i\bigr] = 1.
\end{align*}
Hence $S^+Q = I$ and $(S^+)^{-1}=Q$, as claimed.
\end{proof}

\begin{theorem}[Non-invertible solutions of the MA($1$) process]\label{teo:MA1noinvertible}
Let $\mathbf{y}=(y_1, \dots, y_n)^{\top}$ be a sample from a \emph{MA($1$)} process. The non-invertible critical points of the likelihood, corresponding to the cases $|a_0| = |a_1|$, are explicitly given as follows:

\begin{enumerate}
    \item For $a_0 = a_1$, the solutions are
    \begin{equation*}
        a_0 = a_1 = \pm \sqrt{ \sum_{k=1}^n \frac{k(n+1-k)}{n+1} y_k^2 + \sum_{i=2}^{n} \sum_{j=1}^{i-1} (-1)^{i+j} \frac{j(n+1-i)}{n+1} y_i y_j }.
    \end{equation*}
    
    \item For $a_0 = -a_1$, the solutions are
    \begin{equation*}
        a_0 = -a_1 = \pm \sqrt{ \sum_{k=1}^n \frac{k(n+1-k)}{n+1} y_k^2 + \sum_{i=2}^{n} \sum_{j=1}^{i-1} \frac{j(n+1-i)}{n+1} y_i y_j }.
    \end{equation*}
\end{enumerate}
\end{theorem}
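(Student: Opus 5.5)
The plan is to reduce the problem to a one‑dimensional likelihood along the two lines $a_0=a_1$ and $a_0=-a_1$. I would use a symmetry of $\Sigma$ to show that on these lines the full gradient vanishes as soon as one directional derivative does. I would then solve that single scalar equation using Lemma~\ref{Lemma:inverse_toeplitz}.

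\textbf{Step 1 (symmetry).} The entries of $\Sigma$ in \eqref{eq:Sigma1} depend only on $a_0^2+a_1^2$ and $a_0a_1$. These are invariant under the swap $\tau_+:(a_0,a_1)\mapsto(a_1,a_0)$ and under $\tau_-:(a_0,a_1)\mapsto(-a_1,-a_0)$. Hence $\ell(a_0,a_1)=\ell(\tau_\pm(a_0,a_1))$. Differentiating $\ell(a_0,a_1)=\ell(a_1,a_0)$ and evaluating at a point with $a_0=a_1$ gives $\partial_{a_0}\ell=\partial_{a_1}\ell$ there. Similarly, on $a_0=-a_1$ we get $\partial_{a_0}\ell=-\partial_{a_1}\ell$. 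In both cases the gradient is parallel to the direction of the line. So a point on the line is critical if and only if the derivative of the restricted function $a\mapsto \ell(a,\pm a)$ vanishes.

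\textbf{Step 2 (restricted likelihood).} On $a_0=a_1=a$ we have $\Sigma=a^2S^+$, and on $a_0=-a_1=a$ we have $\Sigma=a^2S^-$, with $S^\pm$ as in \eqref{eq:matrixS}. Therefore
\[
\ell(a,\pm a)= -2n\log a-\log\det S^{\pm}-\frac{1}{a^2}\,\mathbf y^\top (S^{\pm})^{-1}\mathbf y .
\]
Setting the derivative in $a$ to zero gives a closed-form value: $a^2$ equals the quadratic form $\mathbf y^\top (S^{\pm})^{-1}\mathbf y$ rescaled by a constant coming from the $\log\det$ term. This gives exactly two solutions $a=\pm\sqrt{\cdot}$ on each line. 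For generic $\mathbf y$ the quadratic form is nonzero, so these are genuine (possibly complex) critical points with $a_0\neq 0$.

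\textbf{Step 3 (explicit entries).} Expand the quadratic form using Lemma~\ref{Lemma:inverse_toeplitz}. The diagonal entries are $(S^{\pm})^{-1}_{kk}=k(n+1-k)/(n+1)$, which gives the sum of the $y_k^2$ terms. The strictly lower entries $(\pm)$-signed by $(-1)^{i+j}$ give the cross terms $y_iy_j$ with $i>j$. The upper entries contribute identically by symmetry, which fixes the coefficient of the double sum.

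The main obstacle is bookkeeping rather than conceptual. One must track the normalisation constant from the $\log\det$ term, namely the $n$ in $-2n\log a$, together with the factor from symmetry in the off-diagonal terms, and reconcile them with the normalisation used in the displayed formula. I would first carry out the computation verbatim from the log-likelihood as written in \eqref{eq:likMA}. I would then check it against the small cases $n=2,3$ from \cite[Proposition 5.5]{autocovariance} to confirm the constants. Finally, Step 1 also shows these are all the non-invertible critical points: any critical point with $|a_0|=|a_1|$ over the reals lies on one of the two lines, where Step 2 gives the complete list.
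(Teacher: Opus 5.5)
Your Steps 1 and 2 follow the paper's route: on each line you write $\Sigma=a^2S^{\pm}$, solve the single scalar score equation, and expand with Lemma~\ref{Lemma:inverse_toeplitz}. Step 1 is a genuine improvement. The paper writes $\partial\Sigma/\partial a_0=2a_0S^+$, which is really the derivative of $a\mapsto a^2S^+$ along the line; the actual partial at $a_0=a_1$ is $a_0S^+$. The paper also never checks $\partial\ell/\partial a_1=0$. Your $\tau_\pm$-invariance argument shows the gradient is tangent to the line, so the restricted equation is equivalent to criticality.

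\textbf{The gap is your last step.} You plan to ``reconcile'' the constants with the displayed formula, but this cannot be done, because the displayed formula is not what the computation gives. Step 2 yields $a^2=\frac1n\,\mathbf y^\top(S^\pm)^{-1}\mathbf y$. Both the upper and lower off-diagonal entries contribute, so Lemma~\ref{Lemma:inverse_toeplitz} gives, for $a_0=a_1$,
\[
a_0^2=\frac1n\Bigl(\sum_{k=1}^n\frac{k(n+1-k)}{n+1}\,y_k^2+2\sum_{i=2}^n\sum_{j=1}^{i-1}(-1)^{i+j}\frac{j(n+1-i)}{n+1}\,y_iy_j\Bigr),
\]
and the same expression without the sign $(-1)^{i+j}$ for $a_0=-a_1$.

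This differs from the statement in two ways: the overall factor $1/n$, and the factor $2$ on the cross terms. The second changes the relative weight of diagonal and off-diagonal terms. So no normalisation convention for $\ell$ can absorb it, and rescaling $\ell$ cannot move critical points anyway.

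Your proposed $n=2$ check exposes this at once. On $a_0=a_1=a$ one has $\ell(a)=-4\log a-\log 3-\tfrac{2(y_1^2-y_1y_2+y_2^2)}{3a^2}$, so $a^2=\tfrac13(y_1^2-y_1y_2+y_2^2)$. The displayed formula instead gives $\tfrac13(2y_1^2-y_1y_2+2y_2^2)$. For $\mathbf y=(1,1)$ these are $1/3$ and $1$.

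The paper's own proof has exactly this slip. It correctly reaches $a_0^2=\frac1n\mathbf y^\top(S^+)^{-1}\mathbf y$, then drops the $\frac1n$ and the $2$ when substituting the lemma. So your argument proves the corrected formula above, not the statement as printed. You should state that conclusion rather than promise a reconciliation of constants.
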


\begin{proof}
Suppose $a_0=a_1$ (the case $a_0 = -a_1$ is analogous), and consider the tridiagonal matrices $S^{+}$ from \eqref{eq:matrixS}. Then we can express the autocovariance matrix $\Sigma$ from \eqref{eq:Sigma1} as
\begin{equation*}
\Sigma = a_0^2 S^{+}, \quad \text{so that} \quad \frac{\partial \Sigma}{\partial a_0} = 2 a_0 S^{+}.
\end{equation*}

The derivative of the log-likelihood \eqref{eq:loglikMA} with respect to $a_0$ is
\begin{align*}
\frac{\partial \ell}{\partial a_0} &= \operatorname{tr}\Big(\Sigma^{-1} \frac{\partial \Sigma}{\partial a_0}\Big) - \bmf{y}^\top \Sigma^{-1} \frac{\partial \Sigma}{\partial a_0} \Sigma^{-1} \bmf{y} \\
&= \operatorname{tr}\Big(\frac{1}{a_0^2} (S^{+})^{-1} (2 a_0 S^{+})\Big) - \bmf{y}^\top \frac{1}{a_0^2} (S^{+})^{-1} (2 a_0 S^{+}) \frac{1}{a_0^2} (S^{+})^{-1} \bmf{y} \\
&= \frac{2n}{a_0} - \frac{2}{a_0^3} \bmf{y}^\top (S^{+})^{-1} \bmf{y}.
\end{align*}
so that equating to $0$ and solving for $a_0^2$ gives
\begin{equation*}
a_0^2 = \frac{1}{n} \bmf{y}^\top (S^{+})^{-1} \bmf{y}.
\end{equation*}
Substituting the values for $(S^+)^{-1}$ from \eqref{eq:invS}, we get
\begin{equation*}
a_0^2 = \sum_{k=1}^n \frac{k(n+1-k)}{n+1} y_k^2 + \sum_{i=2}^n \sum_{j=1}^{i-1} (-1)^{i+j} \frac{j(n+1-i)}{n+1} y_i y_j.
\end{equation*}
The Cholesky decomposition of $S^{+}$ is
\begin{equation*}
L =
\begin{bmatrix} 
\sqrt{2} & 0 & 0 & \cdots & 0 \\
\frac{1}{\sqrt{2}} & \sqrt{\frac{3}{2}} & 0 & \ddots & \vdots \\
0 & \sqrt{\frac{2}{3}} & \sqrt{\frac{3}{4}} & \ddots & 0 \\
\vdots & \ddots & \ddots & \ddots & 0 \\
0 & \cdots & 0 & \sqrt{\frac{n-1}{n}} & \sqrt{\frac{n+1}{n}} 
\end{bmatrix}.
\end{equation*}
This can be proved by induction: Let $L_n$ the matrix $L$ of size $n\times n$.
For $n=1$ it holds. Assuming it holds for $n-1$, for the $n$-th row we have

$$(L_n)_{n,n-1} = \sqrt{\frac{n-1}{n}}, \qquad
(L_n)_{n,n} = \sqrt{2 - \frac{n-1}{n}} = \sqrt{\frac{n+1}{n}}.$$

Thus, the induction is complete.
Since the matrix $S^{+}$ has a Cholesky decomposition, then $a_0^2>0$ and the two solutions are real numbers.
\end{proof}

\begin{remark}
The previous proof relies on an explicit closed-form expression for the inverse 
of the covariance matrix, which becomes substantially more complex for $q > 1$. 
\end{remark}

Building on our identification of the invertible and non-invertible solutions, we can now clarify the relationship between solutions expressed in the autocovariances $\gamma_i$ and those expressed in the moving-average parameters $(a_0,a_1)$ of MA($1$). 

\begin{corollary}
    The implicit maximum likelihood degree of \emph{MA($1$)} is $2n-3$ in $(\gamma_0,\gamma_1)$. For each solution in the $\gamma_i$-coordinates, the map $\gamma(a_0,a_1) \longmapsto (\gamma_0,\gamma_1)$ produces 4 preimages, in addition to the four non-invertible solutions corresponding to the boundary case $\left|a_0\right|=\left|a_1\right|$. 
\end{corollary}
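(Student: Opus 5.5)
The plan is to compare the critical points of the likelihood in the two coordinate systems via the chain rule for the squaring map
\[
\gamma(a_0,a_1)=(a_0^2+a_1^2,\;a_0a_1),
\]
and then count using Theorem~\ref{teo:conjetura_amendola} and Theorem~\ref{teo:MA1noinvertible}.

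\textbf{Step 1 (chain rule and the Jacobian).} Since $\ell_{\text{MA}}(a)=\tilde\ell(\gamma(a))$, the score in $a$ is $J_\gamma(a)^\top\nabla\tilde\ell(\gamma(a))$. The Jacobian is
\[
J_\gamma=\begin{pmatrix}2a_0 & 2a_1\\ a_1 & a_0\end{pmatrix},\qquad \det J_\gamma=2(a_0^2-a_1^2).
\]
Off the locus $a_0^2=a_1^2$, $J_\gamma$ is invertible, so $a$ is a parametric critical point exactly when $\gamma(a)$ is a critical point of $\tilde\ell$ in $(\gamma_0,\gamma_1)$. On the locus $a_0=\pm a_1$, critical points in $a$ can arise from the kernel of $J_\gamma^\top$ without $\gamma(a)$ being critical; these are the spurious solutions.

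\textbf{Step 2 (fibers of $\gamma$).} For generic $(\gamma_0,\gamma_1)$ with $\gamma_1\neq0$ and $\gamma_0\neq\pm 2\gamma_1$, the equations $a_0^2+a_1^2=\gamma_0$ and $a_0a_1=\gamma_1$ have exactly four complex solutions. They are $(a_0,a_1)$, $(-a_0,-a_1)$, $(a_1,a_0)$ and $(-a_1,-a_0)$, corresponding to the root inversion $z\mapsto z^{-1}$ and the global sign. This follows since $(a_0\pm a_1)^2=\gamma_0\pm2\gamma_1$. The fiber degenerates to two points only when $\gamma_0=\pm2\gamma_1$, that is, exactly at $a_0=\pm a_1$.

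\textbf{Step 3 (counting).} By Theorem~\ref{teo:MA1noinvertible}, there are exactly four parametric critical points with $a_0^2=a_1^2$: the two signs for $a_0=a_1$ and the two for $a_0=-a_1$. Each is real and nonzero for generic data. Removing these from the $8(n-1)$ of Theorem~\ref{teo:conjetura_amendola} leaves $8(n-1)-4=8n-12$ invertible-type critical points. By Step 1 these are exactly the preimages of the $\gamma$-critical points lying off $\gamma_0=\pm2\gamma_1$, and by Step 2 each such point has four preimages. Hence the number of such $\gamma$-critical points is
\[
\frac{8n-12}{4}=2n-3 .
\]

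\textbf{Main obstacle.} The delicate point is verifying that, for generic data, no genuine critical point of $\tilde\ell$ lies on the discriminant lines $\gamma_0=\pm2\gamma_1$. Otherwise the four boundary solutions could be images of true $\gamma$-critical points and the count would shift. The first thing I would try is to check this directly: on $a_0=a_1$, $\nabla\tilde\ell$ has two components, while criticality along the line imposes only one condition (the one solved in Theorem~\ref{teo:MA1noinvertible}). So full $\gamma$-criticality would require one extra polynomial condition on $\mathbf y$, namely the derivative of $\tilde\ell$ in the $\gamma_1$ direction. Evaluated there using $(S^+)^{-1}$ from Lemma~\ref{Lemma:inverse_toeplitz}, this condition is not identically zero in $\mathbf y$, which can be checked at one explicit sample. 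A fallback is to verify generic transversality numerically at one data point and invoke genericity.

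In parallel, I would confirm that the $8(n-1)$ parametric solutions are counted without multiplicity, so that the division by four is exact. This follows from the root-count argument in the proof of Theorem~\ref{teo:conjetura_amendola}, where generically the $4(n-1)$ values of $b$ are distinct.
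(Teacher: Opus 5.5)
Your proof is correct and uses the same bookkeeping as the paper, namely $8(n-1)=4(2n-3)+4$: four-point generic fibres of $\gamma$ plus the four boundary critical points of Theorem~\ref{teo:MA1noinvertible}. The paper only cites the fibre count and checks this arithmetic against Theorem~\ref{teo:conjetura_amendola}; you instead run the identity in the direction that actually yields $2n-3$, and you make explicit two things the paper leaves implicit: the chain-rule equivalence off $a_0^2=a_1^2$, and the generic check that the boundary points are not $\gamma$-critical. That check does hold: at the points with $a_0=a_1$, $\partial\tilde\ell/\partial\gamma_1$ is proportional to $n\,\mathbf{y}^\top (S^+)^{-2}\mathbf{y}-\operatorname{tr}\bigl((S^+)^{-1}\bigr)\,\mathbf{y}^\top (S^+)^{-1}\mathbf{y}$, which is not identically zero because $S^+$ is not scalar.
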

\begin{proof}
The fact that  $\gamma(a_0,a_1)$ admits four distinct preimages is proved in \cite[Proposition 4.1]{autocovariance}.
     Specifically, the system formulated in the $\gamma_i$-coordinates has $2n-3$ solutions, while the lifted system in the $(a_0,a_1)$-coordinates has 
$4(2n-3)+4 = 8(n-1)$ 
solutions in total, precisely matching the predicted maximum in Theorem~\ref{teo:conjetura_amendola}. 
\end{proof}
\begin{remark}\label{rmk:bound1}
    In \cite[Proposition 5.1]{Sturmfels2019} and \cite[Theorem 2.2]{Coons2019}, it is shown that generic two-dimensional Gaussian covariance models have an implicit ML degree of $2n-3$. Our results show that the  \emph{MA($1$)} model is generic enough in the sense that this upper bound is attained. This phenomenon is specific to \emph{MA($1$)}. Indeed, already for \emph{MA($2$)} the number of solutions is less than the generic upper bound, see Remark~\ref{rmk:bound2}. This reflects the more complex algebraic structure introduced by the additional parameters in higher order models.
\end{remark}

\section{Model \texorpdfstring{$MA(2)$}{MA(2)}}
  
The MA($2$) model introduces additional complexity compared to MA($1$),
as the autocovariance matrix $\Sigma(\gamma)$ is now pentadiagonal Toeplitz, and no closed-form expression for its inverse exists in general. More explicitly, the autocovariance matrix has the form:
\begin{equation}
\Sigma(\gamma) = 
\begin{bmatrix}
\gamma_0 & \gamma_1 & \gamma_2 & 0 & \cdots & 0  & 0  & 0\\
\gamma_1 & \gamma_0 & \gamma_1 & \gamma_2 & \cdots & 0  & 0  & 0  & \\
\gamma_2 & \gamma_1 & \gamma_0 & \gamma_1 & \cdots & 0 & 0  & 0  \\
0 & \gamma_2 & \gamma_1 & \gamma_0 & \cdots & 0 & 0  & 0  \\
\vdots & \vdots & \vdots & \vdots & \ddots & \vdots & \vdots  & \vdots\\
0 & 0 & 0 & 0 & \cdots & \gamma_0 & \gamma_1  & \gamma_2  \\ 
0 & 0 & 0 & 0 & \cdots & \gamma_1 & \gamma_0  & \gamma_1 \\
0 & 0 & 0 & 0 & \cdots & \gamma_2 & \gamma_1  & \gamma_0  
\end{bmatrix} ,  \label{Eq:MA2_Sigma_gamma}
\end{equation}
with the respective derivatives:
\begin{small}
\begin{equation}
\frac{\partial \Sigma}{\partial \gamma_0}(\gamma) = 
\begin{bmatrix}
1 & 0& 0 & 0 & \cdots & 0  & 0  & 0\\
0 & 1 & 0 & 0 & \cdots & 0  & 0  & 0  & \\
0 & 0 & 1 & 0 & \cdots & 0 & 0  & 0  \\
0 & 0 & 0 & 1 & \cdots & 0 & 0  & 0  \\
\vdots & \vdots & \vdots & \vdots & \ddots & \vdots & \vdots  & \vdots\\
0 & 0 & 0 & 0 & \cdots & 1 & 0  & 0  \\ 
0 & 0 & 0 & 0 & \cdots & 0 & 1  & 0 \\
0 & 0 & 0 & 0 & \cdots & 0 & 0  & 1  
\end{bmatrix}, 
\quad \frac{\partial \Sigma}{\partial  \gamma_1}(\gamma) = 
\begin{bmatrix}
0 & 1 & 0 & 0 & \cdots & 0  & 0  & 0\\
1 & 0 & 1 & 0 & \cdots & 0  & 0  & 0  & \\
0 & 1 & 0 & 1 & \cdots & 0 & 0  & 0  \\
0 & 0 & 1 & 0 & \cdots & 0 & 0  & 0  \\
\vdots & \vdots & \vdots & \vdots & \ddots & \vdots & \vdots  & \vdots\\
0 & 0 & 0 & 0 & \cdots & 0 & 1  & 0 \\ 
0 & 0 & 0 & 0 & \cdots & 1 & 0  & 1 \\
0 & 0 & 0 & 0 & \cdots & 0 & 1  & 0  
\end{bmatrix}
\label{Eq:MA2_Sigma_derivate}
\end{equation}
\begin{equation}
 \frac{\partial \Sigma}{\partial \gamma_2}(\gamma) = 
\begin{bmatrix}
0 & 0 & 1 & 0 & \cdots & 0  & 0  & 0\\
0 & 0 & 0 & 1 & \cdots & 0  & 0  & 0  & \\
1 & 0 & 0 & 0 & \cdots & 0 & 0  & 0  \\
0 & 1 & 0 & 0 & \cdots & 0 & 0  & 0  \\
\vdots & \vdots & \vdots & \vdots & \ddots & \vdots & \vdots  & \vdots\\
0 & 0 & 0 & 0 & \cdots & 0 & 0  & 1  \\ 
0 & 0 & 0 & 0 & \cdots & 0& 0  & 0 \\
0 & 0 & 0 & 0 & \cdots & 1 & 0 & 0  
\end{bmatrix}. \label{Eq:MA2_Sigma_derivate2}
\end{equation}
\end{small}
Substituting the parametrization in $a$, we obtain the autocovariance matrix:
\begin{footnotesize}
\begin{equation}
\Sigma(a) = 
\begin{bmatrix}
a_0^2+a_1^2+a^2 & a_1(a_0+a_2) & a_0a_2 & 0 & \cdots & 0 \\
a_1(a_0+a_2) & a_0^2+a_1^2+a_2^2 & a_1(a_0+a_2) & a_0a_2 & \cdots & 0 \\
a_0a_2 & a_1(a_0+a_2) & a_0^2+a_1^2+a_2^2 & a_1(a_0+a_2) & \cdots & 0 \\
0 & a_0 a_2 & a_1(a_0+a_2) & a_0^2+a_1^2+a_2^2 & \cdots & 0 \\
\vdots & \vdots & \vdots & \vdots & \ddots & \vdots \\
0 & 0 & 0 & 0 & \cdots & a_0^2+a_1^2+a_2^2
\end{bmatrix}
\label{Eq:MA2_Sigma_a}
\end{equation}
\end{footnotesize}

and the corresponding derivatives:
\begin{footnotesize}
\setlength{\arraycolsep}{3pt}
\begin{equation}
   \frac{\partial \Sigma}{ \partial a_0}(a)=  
\begin{bmatrix}
2a_0 & a_1 & a_2 & 0 & \cdots & 0 \\
a_1 & 2a_0 & a_1 & a_2 & \cdots & 0 \\
a_2 & a_1 & 2a_0 & a_1 & \cdots & 0 \\
0 & a_2 & a_1 & 2a_0 & \cdots & 0 \\
\vdots & \vdots & \vdots & \vdots & \ddots & \vdots \\
0 & 0 & 0 & 0 & 0 & 2a_0
\end{bmatrix}
, \;  \frac{\partial \Sigma}{ \partial a_1}(a)= 
\begin{bmatrix}
2a_1 & a_0+a_2 & 0 & 0 & \cdots & 0 \\
a_0+a_2 & 2a_1 & a_0+a_2 & 0 & \cdots & 0 \\
0 & a_0+a_2 & 2a_1 & a_0+a_2 & \cdots & 0 \\
0 & 0 & a_0+a_2 & 2a_1 & \cdots & 0 \\
\vdots & \vdots & \vdots & \vdots & \ddots & \vdots \\
0 & 0 & 0 & 0 & \cdots & 2a_1
\end{bmatrix}
 \label{Eq:MA2_Sigma_a_derivate} 
\end{equation}
\end{footnotesize}
    \begin{footnotesize}
\begin{equation}
   \frac{\partial \Sigma}{ \partial a_2}(a)= 
\begin{bmatrix}
2a_2 & a_1 & a_0 & 0 & \cdots & 0 \\
a_1 & 2a_2 & a_1 & a_2 & \cdots & 0 \\
a_0 & a_1 & 2a_2 & a_1 & \cdots & 0 \\
0 & a_0 & a_1 & 2a_2 & \cdots & 0 \\
\vdots & \vdots & \vdots & \vdots & \ddots & \vdots  \\
0 & 0 & 0 & 0 & \cdots & 2a_2
\end{bmatrix}
\label{Eq:MA2_Sigma_a_ derivate2}
\end{equation}
\end{footnotesize}

\subsection{Maximum Likelihood Degrees}
As a consequence of not having access to an explicit inverse of \eqref{Eq:MA2_Sigma_gamma}, analytical derivations of the ML degree become infeasible for a general sample size $n$, and we proceed using computational methods. In \cite{autocovariance}, the authors used Gröbner bases to compute the solutions of the score equations symbolically, obtaining the parametric maximum likelihood degree up to the case $n = 6$. Since Gröbner bases computations can become extremely computationally demanding as the sample size increases, to extend this analysis beyond such limitations, we use the \texttt{HomotopyContinuation.jl} package~\cite{HomotopyContinuation.jl} in \texttt{Julia}, which allows us to efficiently track all complex solutions of the polynomial system defined by the score equations. Additionally, we used the \texttt{Pandora} package~\cite{brysiewicz2025pandora} to visualize the number of solutions. 
We compute the parametric ML degree using $(a_0,a_1,a_2)$ and the implicit ML degree using the autocovariances $(\gamma_0, \gamma_1, \gamma_2)$. The results are summarized in Table~\ref{tab:MA(2)_gamma_res}. 

\begin{table}[H]
    \centering
    \begin{tabular}{|c|cccccccc|}
\hline
Sample Size $n$ & 3 & 4 & 5 & 6 & 7 & 8 & 9 & 10 \\
\hline
Parametric ML degree & 58 & 138 & 258 & 410 & 578 & 778 & 954 & 1136 \\
Implicit ML degree & 2 & 9 & 21 & 37 & 55 & 77 & 96 & 116 \\
\hline
\end{tabular}
    \caption{ML degrees of a $\text{MA($2$)}$ process.}
    \label{tab:MA(2)_gamma_res}
\end{table}

Figure~\ref{fig:MA2_real_a} and Figure~\ref{fig:MA2_real_gamma} show the real solution maps for selected sample sizes, for the parametric and implicit ML degree. These maps were generated using the \texttt{Pandora.jl} package~\cite{brysiewicz2025pandora}, which allows exploring how the number of real solutions changes as a single sample value $y$ is varied while keeping all other sample values fixed. By systematically changing $y$, Pandora evaluates the number of real solutions for each configuration, providing a visualization of the distribution of solutions in the parameter 
space. 

The solution maps reveal a notable difference between the two formulations. In the implicit case (Figure~\ref{fig:MA2_real_gamma} ), the number of real solutions reaches the maximum possible value (equal to the implicit ML degree) for a significant portion of the sampled configurations, whereas in the parametric case (Figure~\ref{fig:MA2_real_a}) this maximum is never attained. This suggests a structural distinction: we conjecture that for the implicit formulation the maximum number of real solutions is always achievable for generic sample values, while for the parametric formulation it is not.

Figure~\ref{fig:MA2_n3_solution} shows the solution maps for $n=3$, indicating which 
of the two solutions---positive (blue) or negative (red)---is selected for each 
parameter configuration. Since a full three-dimensional visualization is not feasible, 
one variable $y_i$ is fixed in each panel while the remaining two are varied. A key 
observation is that the boundaries separating both regions are non-trivial: even after 
fixing one variable, neither solution dominates over a simple subdomain, reflecting a 
complex dependence of the solution selection on the parameters.

\begin{figure}[H]
    \centering
    \begin{minipage}{0.32\linewidth}
        \centering
        \includegraphics[width=\linewidth]{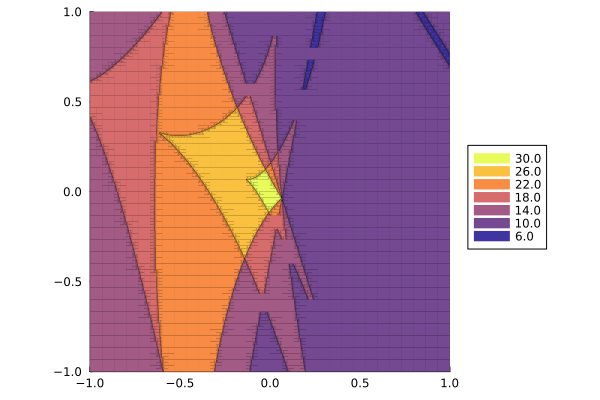}
    \end{minipage}
    \hfill
    \begin{minipage}{0.32\linewidth}
        \centering
        \includegraphics[width=\linewidth]{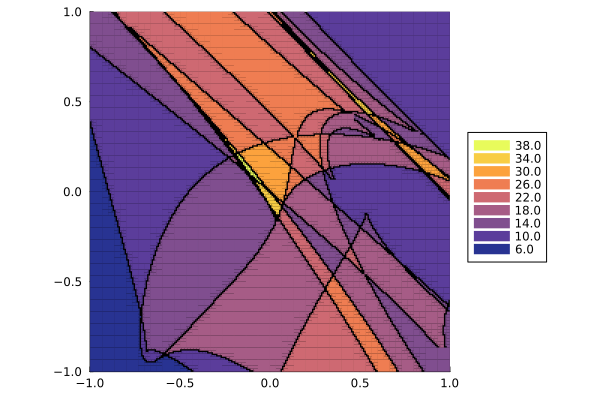}
    \end{minipage}
    \hfill
    \begin{minipage}{0.32\linewidth}
        \centering
    \includegraphics[width=\linewidth]{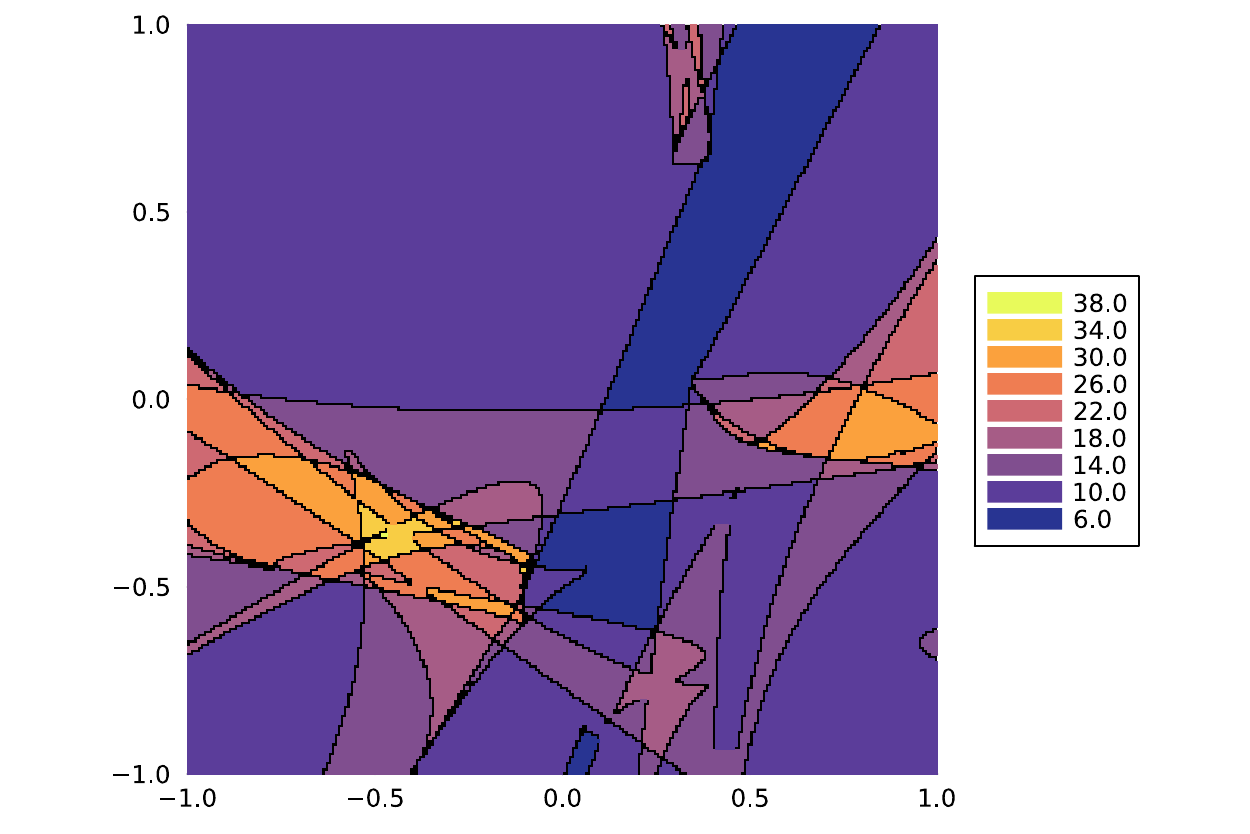}
    \end{minipage}
    \caption{ Parametric real solution maps for a MA($2$) process, from left to right: $n=3,4,5$.}
    \label{fig:MA2_real_a}
\end{figure}
\begin{figure}[H]
    \centering
    \begin{minipage}{0.32\linewidth}
        \centering
        \includegraphics[width=\linewidth]{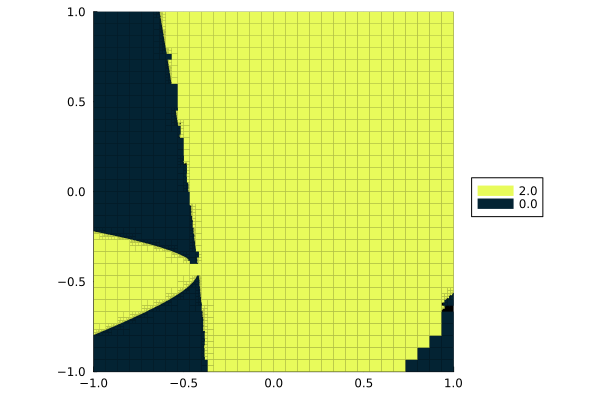}
    \end{minipage}
    \hfill
    \begin{minipage}{0.32\linewidth}
        \centering
        \includegraphics[width=\linewidth]{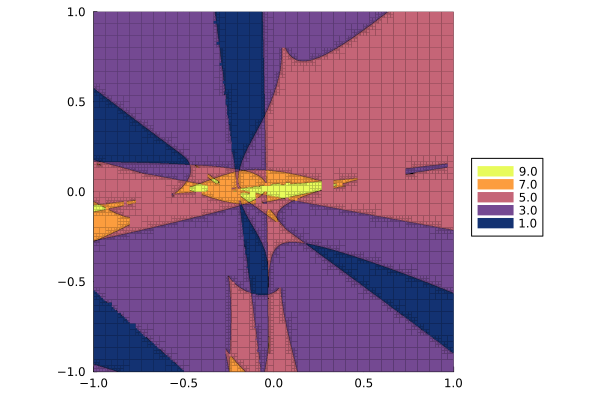}
    \end{minipage}
    \hfill
    \begin{minipage}{0.32\linewidth}
        \centering
        \includegraphics[width=\linewidth]{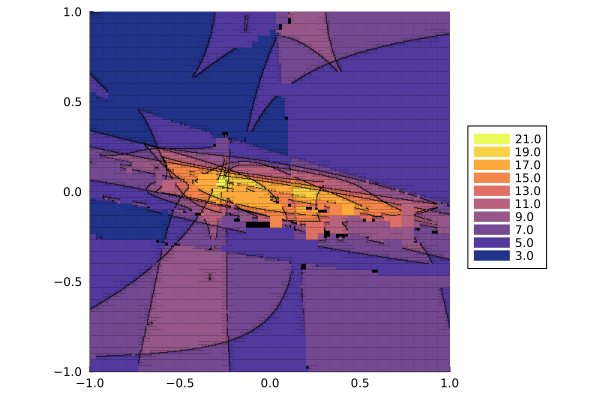}
    \end{minipage}
    \caption{Implicit real solution maps for a MA($2$) process, from left to right: $n=3,4,5$.}
    \label{fig:MA2_real_gamma}
\end{figure}

\begin{theorem}\label{thm:MA2n3}
 Consider a \emph{MA($2$)} process with $n = 3$ sample points $\bmf{y} = (y_1, y_2, y_3)^{\top}$. The implicit maximum likelihood degree is  $2$. The two (not necessarily real) solutions are given precisely by:
\begin{small}
\begin{align*}
     \gamma_0&=\dfrac{1}{6(y_1-2y_2+y_3)(y_1+2y_2+y_3)}\left(y_1^4-10y_2^4+2y_1^3y_3+2y_1y_2^2y_3+2y_1y_2y_3^3+2y_1y_3^3+y_3^4\right. \\ 
     &\left. \pm ((y_1+y_3)^2+2y_2^2)\sqrt{y_1^4-4y_1^2y_2^2+y_2^4+6y_1y_2^2y_3-y_1^2y_3^2-4y_2^2y_3^2+y_3^4} \right), \\ 
     \gamma_1&=y_2(y_1+y_3)\dfrac{(y_1^2-3y_2^2+y_1y_3+y_3^2)\mp \sqrt{y_1^4-4y_1^2y_2^2+y_2^4+6y_1y_2^2y_3-y_1^2y_3^2-4y_2^2y_3^2+y_3^4}}{2(y_1-2y_2+y_3)(y_1+2y_2+y_3)}, \\
     \gamma_2&=\dfrac{1}{2(y_1-2y_2+y_3)(y_1+2y_2+y_3)}\left(y_1^2y_2^2-2y_2^4+y_1^3y_3-4y_1y_2^2y_3+2y_1^2y_3^2+y_2^2y_3^2+y_1y_3^3\right. \\ 
     &\left. \mp 2y_2\sqrt{y_1^4-4y_1^2y_2^2+y_2^4+6y_1y_2^2y_3-y_1^2y_3^2-4y_2^2y_3^2+y_3^4}\right),
\end{align*}
\end{small}
with $(y_1-2y_2+y_3)(y_1+2y_2+y_3)\neq 0$. We also have the following special cases:
\begin{itemize}

\item \textbf{Special case I:} If $y_1  +2y_2 + y_3=0$  then the unique solution is
\begin{equation*}
\gamma_0 = \frac{y_3^2 + 2y_2y_3 - 5y_2^2}{6}, \quad
\gamma_1 = y_2^2, \quad
\gamma_2 = -\frac{y_3^2 + 2y_2y_3 + 3y_2^2}{2}.
\end{equation*}

\item \textbf{Special case II:} If $y_1 - 2y_2 + y_3=0$ then the unique solution is 
\begin{equation*}
\gamma_0 = \frac{y_3^2 - 2y_2y_3 - 5y_2^2}{6}, \quad
\gamma_1 = -y_2^2, \quad
\gamma_2 = -\frac{y_3^2 - 2y_2y_3 + 3y_2^2}{2}.
\end{equation*}

\end{itemize}

\end{theorem}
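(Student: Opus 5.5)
For $n=3$ the model in $\gamma$-coordinates is the full space of $3\times3$ symmetric Toeplitz matrices, a linear covariance model. I would work implicitly in $(\gamma_0,\gamma_1,\gamma_2)$ and reduce the score equations to a single quadratic.

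The score equations for $\ell$ in \eqref{eq:likMA} are
\[
\operatorname{tr}\!\Big(\Sigma^{-1}\tfrac{\partial\Sigma}{\partial\gamma_i}\Big)=\bmf{y}^\top\Sigma^{-1}\tfrac{\partial\Sigma}{\partial\gamma_i}\Sigma^{-1}\bmf{y},\qquad i=0,1,2,
\]
with the derivative matrices \eqref{Eq:MA2_Sigma_derivate} and \eqref{Eq:MA2_Sigma_derivate2}. In terms of $K=\Sigma^{-1}$ and $w=K\bmf{y}$, they say that the projections of $K$ and of $ww^\top$ onto the Toeplitz subspace coincide:
\begin{align*}
\operatorname{tr}K&=\|w\|^2,\\
2(K_{12}+K_{23})&=2(w_1w_2+w_2w_3),\\
2K_{13}&=2w_1w_3.
\end{align*}

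\textbf{Step 1: use the flip symmetry.} Let $J$ be the flip permutation. It commutes with every symmetric Toeplitz $\Sigma$. Decompose $\bmf{y}=\bmf{y}^s+\bmf{y}^a$ into symmetric and antisymmetric parts, with $\bmf{y}^s=\big(\tfrac{y_1+y_3}{2},y_2,\tfrac{y_1+y_3}{2}\big)$ and $\bmf{y}^a=\big(\tfrac{y_1-y_3}{2},0,-\tfrac{y_1-y_3}{2}\big)$. In the orthonormal basis $e_2$, $(e_1+e_3)/\sqrt2$, $(e_1-e_3)/\sqrt2$, the matrix $\Sigma$ becomes block diagonal:
\[
A=\begin{pmatrix}\gamma_0&\sqrt2\gamma_1\\ \sqrt2\gamma_1&\gamma_0+\gamma_2\end{pmatrix},\qquad c=\gamma_0-\gamma_2 .
\]
Reparametrize by $(\alpha,\beta,\delta)=(\gamma_0,\gamma_1,\gamma_0+\gamma_2)$, with $c=2\alpha-\delta$. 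The log-likelihood then separates as
\[
\ell=-\log\det A-u^\top A^{-1}u-\log c-\frac{v^2}{c},
\]
where $u=(y_2,(y_1+y_3)/\sqrt2)$ and $v=(y_1-y_3)/\sqrt2$.

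\textbf{Step 2: write the equations in $(A^{-1},c)$ and solve.} Differentiating in $\beta$ and $\delta$ gives two equations matching entries of $A^{-1}$ with entries of $A^{-1}uu^\top A^{-1}$, each corrected by a term coming from $c$. Differentiating in $\alpha$ gives a third equation involving $1/c-v^2/c^2$. The natural guess $A=uu^\top$ is singular, so the coupling through $c$ is essential.

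I would eliminate as follows. First solve the two $A$-equations for the entries of $A^{-1}$ in terms of $t:=1/c-v^2/c^2$. Then impose that $A$ is consistent with $c=2\alpha-\delta$. Clearing denominators should leave a single polynomial equation in $c$ (or in $\gamma_1$). The claim amounts to showing that this equation is quadratic, with discriminant
\[
y_1^4-4y_1^2y_2^2+y_2^4+6y_1y_2^2y_3-y_1^2y_3^2-4y_2^2y_3^2+y_3^4
\]
and leading coefficient proportional to $(y_1-2y_2+y_3)(y_1+2y_2+y_3)=(y_1+y_3)^2-4y_2^2$. Back-substitution then gives $\gamma_0,\gamma_1,\gamma_2$ rationally in the square root, which yields the displayed formulas. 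A Gröbner basis computation in \texttt{Macaulay2} or similar, with $y$ symbolic, would certify this: eliminating to a single variable should give a degree-two polynomial whose coefficients factor as claimed, after saturating out $\det\Sigma=0$.

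\textbf{Step 3: degenerate cases.} When $y_1\pm2y_2+y_3=0$, the leading coefficient of the quadratic vanishes. It becomes linear, so one root escapes to infinity and the remaining root is given by a rational formula. Substituting, say, $y_1=-2y_2-y_3$ should give Special case I, and $y_1=2y_2-y_3$ should give Special case II. I must also check that the finite solution does not make $\det\Sigma=0$.

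\textbf{Main obstacle.} The hardest part is the elimination in Step 2: showing the system genuinely reduces to a quadratic, with no extraneous components on $\det\Sigma=0$, and obtaining the exact closed forms. By hand I would push the $2\times2$ block structure as far as possible. Otherwise I would rely on a symbolic Gröbner elimination and then verify the stated formulas by substituting them back into the score equations, with generic ML degree $2$ following from the squarefree discriminant.
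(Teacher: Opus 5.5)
Your fallback certificate is exactly the paper's proof. There, a Gr\"obner basis for generic $\bmf{y}$ consists of a quadratic in $\gamma_0$ with leading coefficient $12(y_1-2y_2+y_3)(y_1+2y_2+y_3)$, plus two equations linear in $\gamma_1$ and $\gamma_2$. The last has coefficient $y_2$, hence the paper's separate $y_2=0$ computation.

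Your flip-symmetry reduction is a genuinely different route, and, pushed through, it closes by hand. Set $t=1/c-v^2/c^2$. The three score equations in $(\alpha,\beta,\delta)$ then say exactly
\[
A^{-1}-A^{-1}uu^\top A^{-1}=D:=\operatorname{diag}(-2t,\,t).
\]
Note that the $\beta$-equation has no $c$-term, while the $\alpha$-equation involves both $A$ and $c$. Multiplying by $u$ gives $Du=(1-s)w$, with $w=A^{-1}u$ and $s=u^\top w$. Here $s\neq1$: if $s=1$ then $Du=0$, so $t=0$ and $A^{-1}=ww^\top$ would be singular. Hence $s(1-s)=u^\top Du=tm$, where $m=u_2^2-2u_1^2=\tfrac12(y_1-2y_2+y_3)(y_1+2y_2+y_3)$. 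Sherman--Morrison then gives $A=D^{-1}-uu^\top/(1-s)$. The constraint $c=2A_{11}-A_{22}$ becomes $c=(s-2)/t$. Combined with $tc^2=c-v^2$, this yields $s=3-v^2/c$ and
\[
6c^2+(m-5v^2)\,c+v^2(v^2-m)=0.
\]
Its discriminant $m^2+14mv^2+v^4$ is $4$ times the radicand in the statement. All steps reverse when $c\neq0$, $t\neq0$, $s\neq1$, and the $\gamma_i$ are rational in $c$.

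This also corrects your Step~3. In the coordinate $c=\gamma_0-\gamma_2$ the leading coefficient is the constant $6$, so nothing escapes to infinity there. When $m=0$ the quadratic factors as $(2c-v^2)(3c-v^2)$.
\begin{itemize}
\item The root $c=v^2/2$ is the spurious value $s=1$. It is a pole of $\gamma_0=A_{11}=-\tfrac1{2t}-\tfrac{y_2^2}{1-s}$, which is why the factor appears as a leading coefficient only in the $\gamma_0$-coordinate.
\item The root $c=v^2/3$, i.e.\ $s=0$, $t=-6/v^2$, $A=D^{-1}-uu^\top$, reproduces Special cases I and II exactly.
\end{itemize}

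Comparing the two routes: yours is hand-checkable and explains the special factor as isotropy of $u$ for $\operatorname{diag}(-2,1)$. It needs no separate $y_2=0$ case and treats the degenerate data rigorously. A Gr\"obner basis computed for generic $\bmf{y}$ does not by itself justify specializing to loci where its leading coefficients vanish, and the paper recomputes only for $y_2=0$. The paper's route is shorter but leaves this structure invisible.

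Finally, your plan to verify the displayed formulas by substitution would fail on the statement as printed. There are three corrections:
\begin{itemize}
\item The term $2y_1y_2y_3^3$ in $\gamma_0$ should be $2y_1^2y_3^2$. It is the only term of degree $5$, and the paper's own Gr\"obner quadratic contains $-8y_1^2y_3^2\gamma_0$.
\item The radical in $\gamma_1$ must carry the same sign as in $\gamma_0$.
\item The radical term in $\gamma_2$ should be $\pm2y_2^2\sqrt{\cdot}$ rather than $\mp2y_2\sqrt{\cdot}$, as homogeneity in $\bmf{y}$ already demands.
\end{itemize}
For instance, for $\bmf{y}=(2,2,3)$ the critical points are $\gamma_0=(71\pm11\sqrt{13})/18$, $\gamma_1=5(7\pm\sqrt{13})/9$, $\gamma_2=(37\pm4\sqrt{13})/9$, with matching signs.
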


\begin{proof}
    A Gröbner basis computation reveals the following equations:
    \begin{small}
    \begin{align*}
12  (y_1-2y_2+y_3)(y_1+2y_2+y_3) \gamma_0^2+(-4y_1^4+40y_2^4-8y_1^3y_3-8y_1y_2^2y_3-8y_1^2y_3^2-8y_1y_3^3-4y_3^4)\gamma_0-\\3y_1^2y_2^4-8y_2^6+2y_1^3y_2^2y_3+2y_1y_2^4y_3+y_1^4y_3^2+4y_1^2y_2^2y_3^2-3y_2^4y_3^2+2y_1^3y_3^3+2y_1y_2^2y_3^3+y_1^2y_3^4=0\\
         (2y_1^2+4y_2^2+4y_1y_3+2y_3^2)\gamma_1+(-6y_1y_2-6y_2y_3)\gamma_0+y_1y_2^3-y_1^2y_2y_3+y_2^3y_3-y_1y_2y_3^2=0\\
y_2\gamma_2+(y_1+y_3)\gamma_1-3y_2\gamma_0+y_2^3-y_1y_2y_3=0
    \end{align*}
    \end{small}
    The first equation is quadratic in $\gamma_0$, leading to the two solutions via the quadratic formula. Once $\gamma_0$ is known, one can solve uniquely for $\gamma_1$ and $\gamma_2$ from the second and third equations, respectively, since these are linear equations.  

    For $y_2 = 0$, substituting into the original system, we obtain the following Gröbner basis:
    \begin{align*}
    12\gamma_0^2+4(-y_1^2-y_3^2)\gamma_0+y_1^2y_3^2=0 \\
        \gamma_1=0\\
        2\gamma_2-y_1y_3=0
    \end{align*}
which readily gives an expression consistent with the original specialized when $y_2=0$.
\end{proof}

\begin{example}
For the case $y_2 = 0$, we obtain the following solutions:
\begin{equation*}
    \gamma_0=\frac{1}{6}\left(y_1^2+y_3^2\pm \sqrt{y_1^4-y_1^2y_3^2+y_3^4}\right), \quad \gamma_1=0, \quad \gamma_2= \frac{y_1y_3}{2},
\end{equation*}
so we have that $\gamma_1 = 0$, which makes the model non-identifiable. This occurs under two distinct conditions:

\begin{itemize}
    \item \textbf{If $a_1 = 0$:} the \emph{MA($2$)} splits into two independent \emph{MA($1$)} processes --- one on even time indices and one on odd -- both sharing the same parameters $a_0,a_2$.
    
    \item \textbf{If $a_0 + a_2 = 0$:} a linear constraint among the coefficients again forces $\gamma_1 = 0$, yielding an autocovariance structure indistinguishable from the previous case.
\end{itemize}

\end{example}

\begin{remark}\label{rmk:bound2}
    As opposed to the \emph{MA($1$)} case (see Remark~\ref{rmk:bound1}), for \emph{MA($2$)}, the system does not attain the upper bound for the implicit ML degree computed in \cite[Theorem 2]{manivel2025proof}, which states that a generic model has an ML degree of $3n^2 - 9n + 7$. Already for $n=3$ we have $2< 3\cdot 3^2 - 9\cdot 3 + 7 = 7$.   
\end{remark}

\begin{remark}
For the likelihood function to be well-defined, it is necessary that the determinant of the matrix $\Sigma(\gamma)$ be nonzero. Hence, we require
\begin{equation*}
\det(\Sigma(\gamma)) = (\gamma_0-\gamma_2)\bigl(\gamma_0(\gamma_0+\gamma_2)-2\gamma_1^2\bigr)\neq 0.
\end{equation*}
Therefore, if for some combination of $y_1,y_2,y_3$ we obtain a solution satisfying $\det(\Sigma(\gamma))=0$, then such a solution cannot correspond to the maximum likelihood estimator.
\end{remark}

\begin{figure}[H]
    \centering
    \begin{minipage}{0.32\linewidth}
        \centering
        \includegraphics[width=\linewidth]{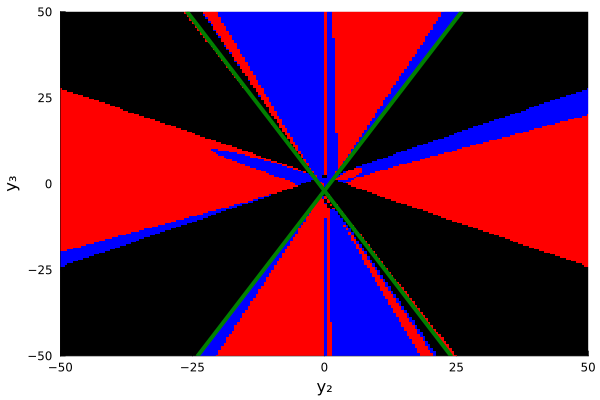}
    \end{minipage}
    \hfill
    \begin{minipage}{0.32\linewidth}
        \centering
        \includegraphics[width=\linewidth]{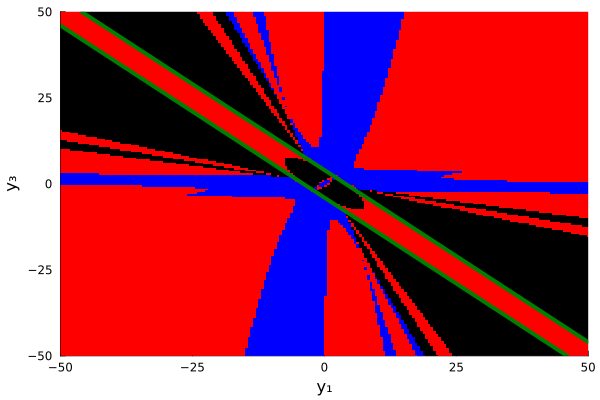}
    \end{minipage}
    \hfill
    \begin{minipage}{0.32\linewidth}
        \centering
    \includegraphics[width=\linewidth]{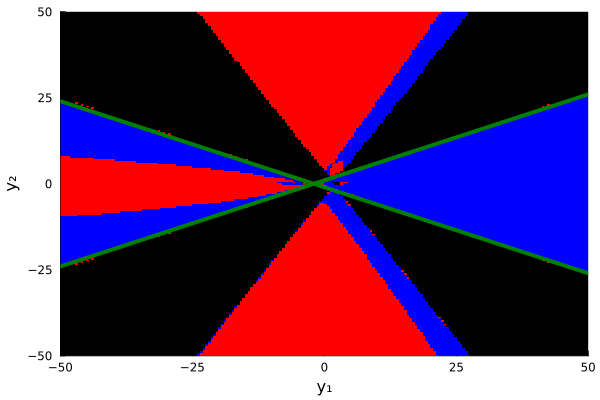}
    \end{minipage}
    \caption{ Solution maps for $n=3$ with one $y_i$ fixed to be 1. Blue indicates the first solution, red the second one, green a unique solution and black indicates that there are no real solutions. From left to right: fixed $y_1=2$, fixed $y_2=2$, fixed $y_3=2$.}
    \label{fig:MA2_n3_solution}
\end{figure}

Figure~\ref{fig:MA2_n3_solution} illustrates the maximum likelihood solution map for $n=3$ by fixing each coordinate $y_i=1$ in turn and mapping the existence and which of the two critical points in Theorem \ref{thm:MA2n3} is the maximum likelihood estimate across the remaining parameter space. Each panel partitions the plane into three distinct regions: first solution (blue), second solution (red), unique solution (green) and no real solution (black). The non-trivial boundaries between these regions reflect the nonlinear coupling inherent in the likelihood equations, providing a qualitative global picture of the solvability conditions that would be difficult to capture analytically.

\subsection{Non-invertible Solutions}

We have seen in the previous section that working with the implicit ML degree leads to a system of polynomial equations with significantly reduced algebraic complexity compared to the parametric ML degree. This reduction in complexity makes the resulting system substantially more tractable and allows computations for larger sample sizes.  However, this simplification comes at an important cost: the implicit ML Degree fails to identify non-invertible solutions. This is a relevant limitation, since in such cases the usual asymptotic properties of the estimators may break down, and standard asymptotic theory is no longer applicable \cite{sargan1983maximum,gospodinov2015minimum}. 

The underlying reason for this loss of solutions is that the MA($2$) model admits multiple preimages under the mapping from $(a_0,a_1,a_2)$ to $\gamma = (\gamma_0, \gamma_1, \gamma_2)$; in particular, this transformation is generically $8$-to-$1$. However, this is not true when the process is \emph{non-invertible}. These configurations are characterized by the vanishing of the Jacobian determinant of the transformation
\begin{equation*}
g(a_0,a_1,a_2) = (\gamma_0, \gamma_1, \gamma_2),
\end{equation*}
that is, $\det( J_g) = 0$. These degenerate configurations are excluded in the implicit ML degree computation, and therefore correspond to critical points that are present in the original $(a_0,a_1,a_2)$ parametrization but are not detected in the $\gamma$-space formulation.

To make this precise, consider the log-likelihood function $\ell(g(a_0,a_1,a_2))$. By the chain rule, the score equations in the new parametrization take the form
\begin{equation*}
[J_g(a_0,a_1,a_2)]^{\top} \nabla \ell(g(a_0,a_1,a_2)) = 0,
\end{equation*}
where $J_g$ is the Jacobian matrix of the transformation $g$. Explicitly, we have
\begin{equation}
[J_g]^{\top} =
\begin{pmatrix}
2a_0 & a_1 & a_2 \\
2a_1 & a_0 + a_2 & 0 \\
2a_2 & a_1 & a_0
\end{pmatrix}.
\end{equation}

To identify where this transformation becomes singular, we compute the determinant of this matrix. This is given by:
\begin{align*}
    \det([J_g]^{\top}) &= 2(a_0^3 - a_0 a_1^2 - a_2^3 - a_0 a_2^2 + a_0^2 a_2 + a_1^2 a_2) \\
    &= 2(a_0 - a_2)(a_0 - a_1 + a_2)(a_0 + a_1 + a_2).
\end{align*}

The vanishing of this determinant defines the locus where the Jacobian becomes singular, which, as we analyze next, corresponds exactly to the non-invertible regions of the MA($2$) model.

Since we search for solutions with $ \det(J_g^{\top})=0$ we have 3 options:
\begin{itemize}[leftmargin=*, itemindent=0pt]
 \item We first analyze the case $a_0 + a_2 = 0$ and $a_1 = 0$.  
Now the Jacobian becomes
\begin{equation*}
J_g^{\top} =
\begin{pmatrix}
2a_0 & 0 & -a_0\\[1pt]
0 & 0 & 0\\[1pt]
-2a_0 & 0 & a_0
\end{pmatrix}
\sim
\begin{pmatrix}
2a_0 & 0 & -a_0\\[1pt]
0 & 0 & 0\\[1pt]
0 & 0 & 0
\end{pmatrix},
\quad
\ker(J_g^{\top}) = 
\operatorname{span}\left\{
\begin{pmatrix}1\\[1pt]0\\[1pt]2\end{pmatrix},
\begin{pmatrix}0\\[1pt]1\\[1pt]0\end{pmatrix}
\right\}.
\end{equation*}

The induced autocovariances satisfy
\begin{equation*}
\gamma_0 = 2a_0^2, \qquad \gamma_1 = 0, \qquad \gamma_2 = -a_0^2,
\end{equation*}
yielding the simple constraint
\begin{equation*}
\gamma_0 + 2\gamma_2 = 0, \qquad 
\nabla g(\gamma_0, \gamma_1, \gamma_2) = 
\left(1,0,2\right)^{\top}.
\end{equation*}
The corresponding characteristic polynomial is
\begin{equation*}
\Theta(x) = \pm a_2 (x - 1)(x + 1),
\end{equation*}
which define  non-invertible MA($2$) process.

\item Asume $a_0 = a_2$ with $a_1 \neq 0$. In this situation, the Jacobian and its kernel simplify to
\begin{equation*}
J_g^{\top} \sim
\begin{pmatrix}
2a_0 & a_1 & a_0 \\
2a_1 & 2a_0 & 0 \\
0 & 0 & 0
\end{pmatrix}, \quad
\ker(J_g) = \operatorname{span} \left\{
\begin{pmatrix}a_0^2\\ -a_1a_0\\a_1^2-2a_0^2\end{pmatrix}
\right\}.
\end{equation*}
For this configuration, the autocovariances can be expressed as
\begin{equation*}
    \gamma_0 = 2a_0^2 + a_1^2, \quad
    \gamma_1 = 2 a_0 a_1, \quad
    \gamma_2 = a_0^2. 
\end{equation*}
Eliminating $a_0$ and $a_1$ leads to the constraint
\begin{equation*}
    4\gamma_0 \gamma_2 - 8\gamma_2^2 - \gamma_1^2 = 0, 
\end{equation*}
which characterizes the locus of non-invertible points in the $(\gamma_0, \gamma_1, \gamma_2)$ space.
To interpret this geometrically, recall that an MA($2$) process can be represented by its characteristic polynomial
$\Theta(x) = a_0 + a_1 x + a_2 x^2$.
When $a_0 = a_2$, this becomes
\begin{equation*}
    \Theta(x) = a_2 \left(x^2 + \frac{a_1}{a_2}x + 1\right)
    = a_2 (x - r_1)(x - \sfrac{1}{r_1}),
\end{equation*}
where $r_1$ and $1/r_1$ are reciprocal roots. This symmetry implies that one of the roots lies inside and the other outside the unit circle, yielding a non-invertible parametrization.
The alternative root configurations:
\begin{equation*}
\pm a_2 (r_1 x - 1)(x - \sfrac{1}{r_1})
\quad \text{and} \quad
\pm a_2 (x - r_1)(\sfrac{1}{r_1}x - 1)
\end{equation*}
break the condition $a_0 = a_2$, and therefore do not correspond to valid solutions.
Hence, the only two valid cases satisfying both the Jacobian degeneracy and the structural constraint are
\begin{equation*}
\pm a_2 (x - r_1)(x - \sfrac{1}{r_1}).
\end{equation*}
Note that if $|r_1|\neq 1$ the process admits the two non-invertible parametrizations: $\pm a_2 (x - r_1)(\sfrac{1}{r_1} x - 1)$ but these are not valid solutions of our system. So even if the MA($2$) process is invertible,  we only obtain non-invertible solutions.

\item $a_0 - a_1 + a_2 = 0$ with $a_1 \neq 0$, hence $a_0 = -a_1 - a_2$. Then
\begin{equation*}
J_g^{\top} =
\begin{pmatrix}2(a_1-a_2) & a_1 & a_2\\ 2a_1 & a_1 & 0\\ 2a_2 & a_1 & a_1-a_2\end{pmatrix}
\sim
\begin{pmatrix}2a_2 & 0 & a_2\\ 0 & a_1 & a_1\\ 0 & 0 & 0\end{pmatrix},
\ker(J_g^{\top}) = \operatorname{span}\left\{
\begin{pmatrix}1\\[1pt]2\\[1pt]-2\end{pmatrix}
\right\}.
\end{equation*}
with autocovariances
\begin{equation*}
\gamma_0 = 2(a_1^2 - a_1 a_2 + a_2^2),\quad
\gamma_1 = a_1^2,\quad
\gamma_2 = a_2(a_1- a_2).
\end{equation*}
Eliminating parameters yields the linear relation 
\begin{equation}
\gamma_0 - 2\gamma_1 + 2\gamma_2 = 0, \label{Eq:caso2_gamma}
\end{equation}
whose gradient is
\begin{equation}
\nabla g(\gamma_0, \gamma_1, \gamma_2) = (1,2,-2)^{\top}. \label{Eq:caso2_g}
\end{equation}
Writing the characteristic polynomial as $\Theta(x) = a_0 + a_1 x + a_2 x^2$, with $a_0 = a_1 - a_2$, its two factorizations are
\begin{equation*}
\pm a_2 (x - r_1)(x - r_2), \qquad \pm a_2 (r_1 x - 1)(r_2 x - 1).
\end{equation*}
Note that the mixed combinations $\pm a_2 (x - r_1)(r_2 x - 1)$ and $\pm a_2 (x - r_2)(r_1 x - 1)$ give coefficients that do not satisfy the constraint $a_0 = a_1 - a_2$, so they are not valid preimages for this case. Thus, the admissible root configurations for this locus are the ones coming from $\theta(x) = \pm a_2 (x - r_1)(x - r_2)$, which is consistent with the above $\gamma$–constraint.

Note again that if $|r_1|<1$ and $|r_2|>1$ (or vice versa) we will have that despite the model being invertible, the solutions obtained will only be non-invertible parameterizations.

\item $a_0 + a_1 + a_2 = 0$ (with $a_1 \neq 0$). The structure of this case is analogous to the previous one. Setting $a_0 = -a_1 - a_2$, the Jacobian reduces to
\begin{equation*}
J_g^{\top} =
\begin{pmatrix}
-2(a_1-a_2) & a_1 & a_2\\[1pt]
2a_1 & -a_1 & 0\\[1pt]
2a_2 & a_1 & -a_1-a_2
\end{pmatrix}
\sim
\begin{pmatrix}
-2a_1 & a_1 & 0\\[1pt]
0 & -a_1 & a_1\\[1pt]
0 & 0 & 0
\end{pmatrix}, \quad
\ker(J_g^{\top}) = \operatorname{span} \left\{
\begin{pmatrix}1\\[1pt]2\\[1pt]-2\end{pmatrix}
\right\}.
\end{equation*}
and the rest follows by the same argument: the autocovariances satisfy the linear relation
\begin{equation*}
\gamma_0 + 2\gamma_1 + 2\gamma_2 = 0, \qquad \nabla g(\gamma_0,\gamma_1,\gamma_2) = (1,2,2)^{\top},
\end{equation*}
mixed factorizations violate the constraint $a_0 = a_1 - a_2$ and are discarded, leaving $\pm a_2(x-r_1)(x-r_2)$ as the only valid root configuration. As before, if $|r_1|<1$ and $|r_2|>1$ (or vice versa), the solutions correspond only to non-invertible parametrizations.

\end{itemize}

Based on the preceding analysis, the solutions of the MA($2$) process can be rigorously categorized into five disjoint groups. One group corresponds to solutions that form part of the implicit ML Degree and is explicitly computable via the $\gamma$ parametrization from Theorem~\ref{thm:MA2n3}. The remaining four groups consist of solutions lying outside the implicit ML degree count, arising from singularities of $J_g^{\top}$.
Specifically, the solutions not captured by the implicit ML degree occur under the following conditions:
\begin{equation*}
a_0 = a_2, \quad 
a_0 - a_1 + a_2 = 0, \quad 
a_0 + a_1 + a_2 = 0, \quad 
a_1 = 0,
\end{equation*}
where in the first three cases we assume $a_1 \neq 0$. These conditions correspond precisely to the points where the Jacobian becomes singular, yielding additional solutions that lie beyond the scope of the implicit ML degree and are not captured when the likelihood is optimized solely with respect to the $\gamma$ parametrization.

We computed all solutions numerically with respect to the parameters $a_i$, classifying them according to the five cases described above. The total counts obtained for each group are summarized in Table~\ref{tab:MA(2)gammares}.

\begin{table}[H]
    \centering
    \begin{tabular}{|c|c|c|c|c|c|c|}
        \hline
        $n$ & $g^{-1}(\gamma)$ & $a_1=0$ & $a_0=a_2$ & $a_0-a_1+a_2=0$ & $a_0+a_1+a_2=0$ & ML \\
         &  & $a_0+a_2=0$ & $a_1\neq 0$ & $a_1\neq 0$ & $a_1\neq 0$ & degree \\
        \hline
        3 & 16 & 2 & 16 & 12 & 12 & 58 \\
        4 & 72 & 2 & 24 & 20 & 20 & 138 \\
        5 & 168 & 2 & 32 & 28 & 28 & 258 \\
        6 & 296 & 2 & 40 & 36 & 36 & 410 \\
        7 & 440 & 2 & 48 & 44 & 44 & 578 \\
        8 & 616 & 2 & 56 & 52 & 52 & 778 \\
        9 & 768 & 2 & 64 & 60 & 60 & 954 \\
        10 & 928 & 2 & 72 & 68 & 68 & 1136 \\
        \hline
    \end{tabular}
    \caption{
    Parametric ML degree for the MA($2$) process with respect to the sample size $n$. Decomposed into the implicit ML degree 
    (column $g^{-1}(\gamma)$) and four additional solution groups arising from singularities of $J_g^{\top}$.
    }
    \label{tab:MA(2)gammares}
\end{table}
From these results, we observe a clear pattern in the solutions with the singularities of $J_g^{\top}$ , which motivates the following theorem:
\begin{theorem}\label{teo:soluciones_noinv_ma2}
The solutions of the parametric ML degree with $\det(J_g^{\top})=0$ are exactly $24n-30$, of which 
\begin{enumerate}
    \item[i)]  $2$ solutions with $a_1 = 0$ and $a_0 =- a_2$.
    \item[ii)] $2\cdot (4(n-1))$ solutions with $a_0 = a_2$ and $a_1 \neq 0$.
    \item[iii)] $4(2n-3)$ solutions satisfying $a_0 - a_1 + a_2 = 0$ with $a_1 \neq 0$.
    \item[iv)] $4(2n-3)$ solutions satisfying $a_0 + a_1 + a_2 = 0$ with $a_1 \neq 0$.

\end{enumerate}
Moreover for item~\emph{i)},  given a sample $y_1, y_2, \dots, y_n$ with $\bmf{y}_{\mathrm{odd}} = (\bmf{y}_{1},\bmf{y}_{3},\dots, \bmf{y}_{2k+1},\dots)^{\top}$ and $\bmf{y}_{\mathrm{even}} = (\bmf{y}_{2},\bmf{y}_{4},\dots, \bmf{y}_{2k},\dots)^{\top}$ denoting the sub-vectors of odd- and even-indexed observations, respectively, the explicit solutions are determined by

{\footnotesize
\[
a_0
= \pm \frac{1}{\sqrt{n}}\sqrt{
\sum_{i,j=1}^{k_1} \frac{\min(i,j)\,[k_1+1-\max(i,j)]}{k_1+1}\, (\bmf{y}_{\mathrm{odd}})_i\, (\bmf{y}_{\mathrm{odd}})_j
+
\sum_{i,j=1}^{k_2} \frac{\min(i,j)\,[k_2+1-\max(i,j)]}{k_2+1}\, (\bmf{y}_{\mathrm{even}})_i\, (\bmf{y}_{\mathrm{even}})_j
}
\]
}

where $k_1 = \lceil n/2 \rceil$, $k_2 = \lfloor n/2 \rfloor$.
\end{theorem}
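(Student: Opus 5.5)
The plan is to use the chain rule identity $[J_g]^\top \nabla_\gamma \ell = 0$ on each component of $\{\det J_g^\top=0\}$. On each component I would reduce the parametric score equations to the critical equations of $\ell$ restricted to a lower-dimensional subfamily of covariance matrices. Then I would count those critical points and multiply by the size of the generic fibre of $a\mapsto\gamma$ restricted to the component. The total follows by adding the four counts:
\[
2+8(n-1)+2\cdot 4(2n-3)=24n-30 .
\]

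\textbf{Case i).} Here $a_1=0$ and $a_2=-a_0$. The second row $(2a_1,a_0+a_2,0)$ of $J_g^\top$ vanishes identically, so the $a_1$-score equation holds automatically. The first and third rows are $(2a_0,0,-a_0)$ and $(-2a_0,0,a_0)$; they are proportional, so only a single equation survives. It is the derivative of $\ell$ along the curve $a_0\mapsto(a_0,0,-a_0)$.

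On this curve $\gamma=(2a_0^2,0,-a_0^2)$. After permuting the coordinates into odd and even indices, $\Sigma=a_0^2\,\mathrm{diag}(S^-_{k_1},S^-_{k_2})$, with $S^-$ as in Lemma~\ref{Lemma:inverse_toeplitz}, because lag~$2$ in the original series is lag~$1$ within each parity class. Repeating the computation in the proof of Theorem~\ref{teo:MA1noinvertible} gives
\[
a_0^2=\tfrac1n\bigl(\bmf{y}_{\mathrm{odd}}^\top (S^-_{k_1})^{-1}\bmf{y}_{\mathrm{odd}}+\bmf{y}_{\mathrm{even}}^\top (S^-_{k_2})^{-1}\bmf{y}_{\mathrm{even}}\bigr).
\]
Inserting the entries from \eqref{eq:invS}, written symmetrically with $\min/\max$, yields the stated formula. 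This gives exactly $2$ solutions, one for each sign.

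\textbf{Cases ii)--iv).} On these components $J_g^\top$ has rank $2$, so the row space of $J_g^\top$ is the tangent plane of the image surface $g(\text{component})$. Hence $J_g^\top\nabla\ell=0$ is equivalent to $(a_0,a_1)$ being a critical point of $\ell$ restricted to that surface.

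For iii) and iv) the image is the plane $\gamma_0\mp2\gamma_1+2\gamma_2=0$. This is a two-dimensional linear Gaussian covariance model. By the results cited in Remark~\ref{rmk:bound1} (\cite{Sturmfels2019,Coons2019}) its ML degree is at most $2n-3$. Each $\gamma$ on the plane has $4$ preimages on the component: the sign $\pm a_2$ and the swap $r_1\leftrightarrow r_2$, with the mixed factorizations excluded as in the case analysis above. This gives $4(2n-3)$ per case.

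For ii), with $a_0=a_2$, I would mimic the proof of Theorem~\ref{teo:conjetura_amendola}. The covariance $\Sigma(a_0,a_1,a_0)$ is homogeneous of degree $2$, so the Euler relation (the combination $a_0\partial_{a_0}+a_1\partial_{a_1}$) gives $\bmf y^\top\Sigma^{-1}\bmf y=n$. This determines $a_0^2$ as a rational function of $b=a_1/a_0$. Substituting into the remaining score equation gives one univariate polynomial in $b$. After clearing denominators and removing spurious roots (as $b=0$ was removed in Theorem~\ref{teo:conjetura_amendola}), I would show it has $4(n-1)$ admissible roots. Each root gives $2$ values of $a_0$, for $8(n-1)$ solutions in total.

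\textbf{Main obstacle.} The hard part is attaining the exact counts rather than upper bounds. In ii) the pentadiagonal $\Sigma$ cannot be diagonalized by a fixed orthogonal matrix, so the degree of the polynomial in $b$ must be bounded by tracking the degree of $\det\Sigma(b)$ and of its adjugate; this step yields only an upper bound of $4(n-1)$. In iii) and iv) one must check that the specific plane, not just a generic one, attains $2n-3$. My approach for both is first to prove the upper bounds by the degree arguments above. Then, for each $n$, I would exhibit a sample with the full number of nondegenerate solutions, certified with \texttt{HomotopyContinuation.jl} as in Table~\ref{tab:MA(2)gammares}, and conclude by upper semicontinuity of the number of isolated solutions in the data. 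That certification covers only finitely many $n$ (the table goes up to $n=10$). For all $n$ I would instead try to show the leading coefficient of the univariate polynomial is a nonzero polynomial in $\bmf y$.
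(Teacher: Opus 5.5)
Your part i) is the paper's argument: the parity permutation into two copies of $S^-$, followed by Lemma~\ref{Lemma:inverse_toeplitz}. Your part ii) is the paper's reduction to a univariate polynomial in $b=a_1/a_0$ via adjugates. Eliminating $a_0^2$ through the Euler relation $\mathbf y^\top\Sigma^{-1}\mathbf y=n$ is a tidier version of the paper's use of the $a_1$-score, and it makes the MA($1$) analogy exact. Since $D_0=D-bD_1$, the polynomial from the $a_0$-score is, up to sign, $b$ times the one from the $a_1$-score. So the nominal degree $4n-3$, minus the spurious root $b=0$, gives at most $4(n-1)$ roots. Equivalently, in the $a_1$-score polynomial the $b^{4n-3}$ terms cancel and the coefficient of $b^{4n-4}$ is a nonzero multiple of $\sum_i y_iy_{i+1}$.

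For iii)--iv) you genuinely depart from the paper, which runs a Lagrange-multiplier computation with $\gamma_1=b\gamma_2$ and counts adjugate degrees. You instead use that, for $a_1\neq0$, $\ker J_g^\top$ is spanned by the normal $(1,\mp2,2)$ of $L_\mp=\{\gamma_0\mp2\gamma_1+2\gamma_2=0\}$. Hence the critical points on the component are lifts of critical points of $\ell$ on the pencil $L_\mp$, and you import the $2n-3$ bound for two-dimensional linear covariance models. This is legitimate for this special pencil and the rank-one data $\mathbf y\mathbf y^\top$, because the number of isolated critical points cannot increase under specialization. It sidesteps the paper's explicit bookkeeping (the passage from $2(n-1)$ to $2n-3$), but it yields only an inequality.

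A small correction on the fibre: swapping $r_1\leftrightarrow r_2$ does not change $\Theta$. On $a_0\mp a_1+a_2=0$ one root of $\Theta$ is fixed at $\mp1$, and the four preimages are $\pm(a_0,a_1,a_2)$ and $\pm(a_2,a_1,a_0)$. On the first plane you can read this off from $\gamma_1=a_1^2$ and $\gamma_2=a_2(a_1-a_2)$.

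The genuine gap, which you flag yourself, is the lower bound that turns these inequalities into ``exactly'' for every $n$. Certified witnesses cover only finitely many $n$. Also, what a witness gives you is persistence of nondegenerate solutions, i.e.\ lower, not upper, semicontinuity.

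A nonzero leading coefficient does not close the gap either. For ii) it is already in hand, as noted above, and it only says the polynomial has $4(n-1)$ roots counted with multiplicity. You still need that, for generic $\mathbf y$, these roots are simple and avoid the excluded loci:
\begin{itemize}
\item $\det D(b)=0$;
\item $\mathbf y^\top D(b)^{-1}\mathbf y=0$;
\item $b=0$;
\item $b=\pm2$, where the plane $a_0=a_2$ meets the other two.
\end{itemize}
In other words, a discriminant and a few resultants must be shown to be nonzero polynomials in $\mathbf y$.

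In iii)--iv) your route produces no univariate polynomial at all. Writing $\Sigma=\gamma_2(bA+B)$ for a basis $A,B$ of $L_\mp$ and eliminating $\gamma_2$ by homogeneity gives one of nominal degree $2n-2$ whose top coefficient cancels. The coefficient you would have to control is therefore that of $b^{2n-3}$.

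The paper's own proof closes this step only by asserting ``no cancellation at the highest degree'' (checked at $\mathbf y=\mathbf 1$), so on this point your proposal is no weaker than the paper. As it stands, however, it proves i) and the bound $\le 24n-30$; equality rests on the computations in Table~\ref{tab:MA(2)gammares}.
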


\begin{proof}

\begin{enumerate}[leftmargin=*, itemindent=0pt]

\item[i)] In the case where $a_1 = 0$ and $a_0 = -a_2$, we have
$\dfrac{\partial \Sigma}{\partial a_1} = 0 \quad \text{and} \quad 
\dfrac{\partial \Sigma}{\partial a_0} = -\dfrac{\partial \Sigma}{\partial a_2}.$

Consequently, the derivatives of the log-likelihood yield a system with a single equation in a single variable.  The covariance matrix in (\ref{Eq:MA2_Sigma_a}) can be written as
\begin{equation*}
\Sigma = a_0^2 
\begin{bmatrix}
2 & 0 & -1 & 0 & \cdots & 0 \\
0 & 2 & 0 & -1 & \cdots & 0 \\
-1 & 0 & 2 & 0 & \cdots & 0 \\
0 & -1 & 0 & 2 & \cdots & 0 \\
\vdots & \vdots & \vdots & \vdots & \ddots & \vdots \\
0 & 0 & 0 & 0 & \cdots & 2
\end{bmatrix} := a_0^2 M,  
\end{equation*}
with derivative
$
\dfrac{\partial \Sigma}{\partial a_0}= 2a_0 M.
$
Considering the log-likelihood \eqref{eq:likMA}, setting $ \dfrac{\partial \ell}{ \partial a_0} = 0$ gives 
\begin{equation*}
\frac{\partial \ell}{\partial a_0} 
= \operatorname{tr}\left(a_0^{-1} M^{-1} M\right) - \frac{1}{a_0^3} \bmf{y}^{\top} M^{-1} M M^{-1} \bmf{y} = 0  \; \Rightarrow \;
a_0 = \pm \sqrt{\frac{1}{n}\bmf{y}^{\top} M^{-1} \bmf{y}}.
\end{equation*}

Since $M$ is diagonally dominant, $M$ and its inverse $M^{-1}$ are positive definite, so that $\bmf{y}^{\top} M^{-1} \bmf{y} \geq  0$, ensuring $a_0 \in \mathbb{R}$. Moreover, $a_0$ can be computed explicitly: define the permutation matrix $P$ corresponding to reordering the indices
$
(1,2,3,\dots,n)\mapsto (1,3,5,\dots,2,4,6,\dots).
$
Since $P$ is orthogonal, we have $P^{-1} = P^{\top}$. Let $ \widetilde M := P^{\top} M P$
denote the permuted matrix. Because $M_{ij}\neq 0$ only when $i$ and $j$ have the same parity, the matrix $\widetilde M$ is block diagonal:
\begin{equation*}
\widetilde M =
\begin{bmatrix}
T_{k_1} & 0\\
0 & T_{k_2}
\end{bmatrix},
\end{equation*}
where  $k_1 = \lceil n/2 \rceil$, $k_2 = \lfloor n/2 \rfloor$ and $T_k$ denotes the tridiagonal Toeplitz $k \times k$ matrix
\begin{small}
\begin{equation*}
T_k =
\begin{bmatrix}
2 & -1 & 0 & \cdots & 0\\
-1 & 2 & -1 & \cdots & 0\\
0 & -1 & 2 & \cdots & 0\\
\vdots & \vdots & \vdots & \ddots & \vdots\\
0 & 0 & 0 & \cdots & 2
\end{bmatrix}.
\end{equation*}
\end{small}
Since $P$ is orthogonal,
$
M^{-1} = P\, \widetilde M^{-1} P^{\top}.
$ Moreover, as $\widetilde M$ is block diagonal,
\begin{equation*}
\widetilde M^{-1} =
\begin{bmatrix}
T_{k_1}^{-1} & 0\\
0 & T_{k_2}^{-1}
\end{bmatrix}.
\end{equation*}
Let $\bmf{y} \in \mathbb{R}^n$ and define the permuted vector
$
\widetilde{\bmf{y}} := P^{\top} \bmf{y} = (
\bmf{y}_{\mathrm{odd}},\bmf{y}_{\mathrm{even}})^{\top}$.
Then,
\begin{equation*}
\bmf{y}^{\top} M^{-1} \bmf{y}
= \bmf{y}^{\top} P
\begin{bmatrix}
T_{k_1}^{-1} & 0 \\
0 & T_{k_2}^{-1}
\end{bmatrix}
P^{\top} \bmf{y} = \widetilde{\bmf{y}} ^{\top}
\begin{bmatrix}
T_{k_1}^{-1} & 0\\
0 & T_{k_2}^{-1}
\end{bmatrix}
\widetilde{\bmf{y}} = \bmf{y}_{\mathrm{odd}}^{\top} T_{k_1}^{-1} \bmf{y}_{\mathrm{odd}}
+ \bmf{y}_{\mathrm{even}}^{\top} T_{k_1}^{-1} \bmf{y}_{\mathrm{even}}.
\end{equation*}
By Lemma~\ref{Lemma:inverse_toeplitz}, the inverse of the tridiagonal Toeplitz matrix $T\in\mathbb{R}^{k\times k}$ is given by 
\begin{equation*}
(T^{-1})_{ij} = \frac{\min(i,j)\,[\,k+1-\max(i,j)\,]}{k+1}, \qquad i,j = 1,\dots,k.
\end{equation*}
Substituting this into the quadratic form yields 
{\footnotesize
\begin{equation*}
a_0^2
= 
\frac{1}{n}\left(\sum_{i,j=1}^{k_1} \frac{\min(i,j)\,[k_1+1-\max(i,j)]}{k_1+1}\, (\bmf{y}_{\mathrm{odd}})_i (\bmf{y}_{\mathrm{odd}})_{j}
+
\sum_{i,j=1}^{k_2} \frac{\min(i,j)\,[k_2+1-\max(i,j)]}{k_2+1}\, (\bmf{y}_{\mathrm{even}})_i (\bmf{y}_{\mathrm{even}})_j\right).
\end{equation*}
}

\item[ii)]  In the case $a_0 = a_2$, we analyze the system in terms of $a_0$ and $a_1$. 
Replacing in the covariance matrix in (\ref{Eq:MA2_Sigma_a}) we obtain: 
\begin{equation*}
\Sigma = 
\begin{bmatrix}
2a_0^2+a_1^2 & 2a_0a_1 & a_0^2 & 0 & \cdots & 0 \\
2a_0a_1 & 2a_0^2+a_1^2 & 2a_0a_1 & a_0^2 & \cdots & 0 \\
a_0^2 & 2a_0a_1 & 2a_0^2+a_1^2 & 2a_0a_1 & \cdots & 0 \\
0 & a_0^2 & 2a_0a_1 & 2a_0^2+a_1^2 & \cdots & 0 \\
\vdots & \vdots & \vdots & \vdots & \ddots & \vdots \\
0 & 0 & 0 & 0 & \cdots & 2a_0^2+a_1^2
\end{bmatrix} 
\end{equation*}
and replacing in \eqref{Eq:MA2_Sigma_derivate} and \eqref{Eq:MA2_Sigma_derivate2} we obtain the matrices of derivatives:
\begin{equation*}
   \dfrac{\partial \Sigma}{\partial a_0}  = 
\begin{bmatrix}
2a_0 & a_1 & a_0 & 0 & \cdots & 0 \\
a_1 & 2a_0 & a_1 &  a_0 & \cdots & 0 \\
a_0 & a_1 & 2a_0 & a_1 & \cdots & 0 \\
0 & a_0 & a_1 & 2a_0 & \cdots & 0 \\
\vdots & \vdots & \vdots & \vdots & \ddots & \vdots \\
0 & 0 & 0 & 0 & \cdots & 2a_0
\end{bmatrix} \qquad  \dfrac{\partial \Sigma}{\partial a_1}  = 2 
\begin{bmatrix}
a_1 & a_0 & 0 & 0 & \cdots & 0 \\
a_0 & a_1 & a_0 & 0 & \cdots & 0 \\
0 & a_0 & a_1 & a_0 & \cdots & 0 \\
0 & 0 & a_0 & a_1 & \cdots & 0 \\
\vdots & \vdots & \vdots & \vdots & \ddots & \vdots \\
0 & 0 & 0 & 0 & \cdots & a_1
\end{bmatrix}
\end{equation*}

We introduce the change of variable $a_1 = b a_0$, so that the three matrices above become
\begin{equation*}
\Sigma = a_0^2 
\begin{bmatrix}
2+b^2 & 2b & 1 & 0 & \cdots & 0 \\
2b & 2+b^2 & 2b & 1 & \cdots & 0 \\
1 & 2b & 2+b^2 & 2b & \cdots & 0 \\
0 & 1 & 2b & 2+b^2 & \cdots & 0 \\
\vdots & \vdots & \vdots & \vdots & \ddots & \vdots \\
0 & 0 & 0 & 0 & \cdots & 2+b^2
\end{bmatrix} =: a_0^2 D,
\end{equation*}
\begin{equation*}
   \dfrac{\partial \Sigma}{\partial a_0}  = 2a_0 
\begin{bmatrix}
2 & b & 1 & 0 & \cdots & 0 \\
b & 2 & b & 1 & \cdots & 0 \\
1 & b & 2 & b & \cdots & 0 \\
0 & 1 & 2 & 2 & \cdots & 1 \\
\vdots & \vdots & \vdots & \vdots & \ddots & \vdots \\
0 & 0 & 0 & 0 & \cdots & 2
\end{bmatrix}=: 2 a_0 D_0, \quad  \dfrac{\partial \Sigma}{\partial a_1}  = 2a_0 
\begin{bmatrix}
b & 1 & 0 & 0 & \cdots & 0 \\
1 & b & 1 & 0 & \cdots & 0 \\
0 & 1 & b & 1 & \cdots & 0 \\
0 & 0 & 1 & b & \cdots & 0 \\
\vdots & \vdots & \vdots & \vdots & \ddots & \vdots \\
0 & 0 & 0 & 0 & \cdots & b
\end{bmatrix}=:2 a_0 D_1.
\end{equation*}
Substituting into \eqref{eq:loglikMA}, the equation $\dfrac{\partial \ell}{\partial a_1} = 0$ for $\operatorname{tr}(D^{-1} D_1) \neq 0$ becomes 
\begin{equation*}
\operatorname{tr}\left(\frac{1}{a_0^2} D^{-1} 2 a_0 D_1 \right) - \bmf{y}^\top \frac{1}{a_0^2} D^{-1} 2 a_0 D_1 \frac{1}{a_0^2} D^{-1} \bmf{y} = 0 \; \Rightarrow \; a_0^2 = \frac{\bmf{y}^\top D^{-1} D_1 D^{-1} \bmf{y}}{\operatorname{tr}(D^{-1} D_1)}.
\end{equation*}
Substituting into $\dfrac{\partial \ell}{\partial a_0} = 0$ gives $a_0^2 \operatorname{tr}(D^{-1} 2D_0) - \bmf{y}^\top D^{-1} 2D_0 D^{-1} \bmf{y} = 0$. That is, 
\begin{equation}\label{eq:trdprime}
\operatorname{tr}(D' D_0) \bmf{y}^\top D' D_1 D' \bmf{y} - \operatorname{tr}(D' D_1) \bmf{y}^\top D' D_0 D' \bmf{y} = 0.
\end{equation}
with $D' = \operatorname{adj}(D) = \det(D) D^{-1}$. Since $D$ is a pentadiagonal Toeplitz matrix, expanding $\det(D_{(n)})$
along the last row and column via cofactor expansion yields the
fifth-order linear recurrence (a standard technique for banded Toeplitz
matrices; cf.~\cite{elouafi2013note})
\begin{align*}
\det(D_{(n)}) &= (1+b^2)\det(D_{(n-1)}) + (2-3b^2)\det(D_{(n-2)})\\
&- (2-3b^2)\det(D_{(n-3)}) - (1+b^2)\det(D_{(n-4)}) + \det(D_{(n-5)}).
\end{align*}

where $D_{(n)}$ is the matrix $D$ of size $n \times n$. By strong induction, $\det(D_{(n)})$ is of degree $2n$, so entries of $D'$ have degree at most $2(n-1)$. Thus, the total degree of \eqref{eq:trdprime} is $6(n-1)+1$, with no
cancellation at the highest degree, as can be verified directly by
taking $\mathbf{y}=\mathbf{1}$. Since we cleared the denominator $\operatorname{tr}(D'(D_1)) \neq 0$, which has degree $2(n-1) + 1$, the equation $\dfrac{\partial \ell}{\partial a_0} = 0$ yields $4(n-1)$ solutions for $b$. Including the two possible solutions for $a_0$, we conclude the system admits generically a total of $8(n-1)$ solutions. 

\item[iii)] In the case $a_0 - a_1 + a_2 = 0$, by \eqref{Eq:caso2_gamma} and \eqref{Eq:caso2_g} the system becomes
\begin{equation*}
\nabla \ell (\gamma_0, \gamma_1, \gamma_2) = \lambda 
\left(1,2,-2 \right)^{\top}, \quad 
\gamma_0 = 2\gamma_1 - 2\gamma_2.
\end{equation*}
In particular, $\dfrac{\partial \ell}{\partial \gamma_0} = \lambda$, that is, $\operatorname{tr}(\Sigma^{-1}) - \bmf{y}^{\top} \Sigma^{-1} \Sigma^{-1} \bmf{y} = \lambda$. Introducing the change of variable $\gamma_1 = b \gamma_2$, the covariance matrix in \eqref{Eq:MA2_Sigma_gamma} can be written as
\begin{equation*}
\Sigma = \gamma_2 
\begin{bmatrix}
2(b-1) & b & 1 & 0 & \cdots & 0 \\
b & 2(b-1) & b & 1 & \cdots & 0 \\
1 & b & 2(b-1) & b & \cdots & 0 \\
0 & 1 & b & 2(b-1) & \cdots & 0 \\
\vdots & \vdots & \vdots & \vdots & \ddots & \vdots\\
0 & 0 & 0 & 0 & \cdots & 2(b-1)
\end{bmatrix} := \gamma_2 M.
\end{equation*}

Denoting $M_1 = \dfrac{ \partial \Sigma}{\partial \gamma_1}$ in \eqref{Eq:MA2_Sigma_derivate} , the equation $\dfrac{\partial \ell}{\partial \gamma_1} = 2\lambda$  becomes
\begin{align*}
\frac{1}{\gamma_2} \operatorname{tr}(M^{-1} M_1) - \frac{1}{\gamma_2^2} \bmf{y}^{\top} M^{-1} M_1 M^{-1} \bmf{y} 
&= 2 \left( \frac{1}{\gamma_2} \operatorname{tr}(M^{-1}) - \frac{1}{\gamma_2^2} \bmf{y}^{\top} M^{-1} M^{-1} \bmf{y} \right),
\end{align*}
so that 
$$\gamma_2 = \frac{\bmf{y}^{\top} M^{-1} (M_1 - 2I) M^{-1} \bmf{y}}{\operatorname{tr}(M^{-1} (M_1 - 2I))}.$$
Similarly, denoting $M_2 = \dfrac{\partial \Sigma}{\partial \gamma_2}$ in \eqref{Eq:MA2_Sigma_derivate2}, the equation $\dfrac{\partial \ell}{\partial \gamma_1} = -2\lambda$ becomes 
\begin{align*}
0 &= \gamma_2 \operatorname{tr}(M^{-1}(M_2 - 2I)) - \bmf{y}^{\top} M^{-1} (M_2 + 2I) M^{-1} \bmf{y}  \\
&= \dfrac{\operatorname{tr}(M^{-1}(M_2 + 2I))}{\operatorname{tr}(M^{-1}(M_1 - 2I))} \bmf{y}^{\top} M^{-1} (M_1 - 2I) M^{-1} \bmf{y}
- \bmf{y}^{\top} M^{-1} (M_2 + 2I) M^{-1} \bmf{y}.
\end{align*} 

Defining $M' = \operatorname{adj}(M) = \det(M) M^{-1}$ and multiplying through by $\det(M)^2$, we obtain
\begin{align*}
\operatorname{tr}(M' (M_2 + 2I)) \bmf{y}^{\top} M' (M_1 - 2I) M' \bmf{y}
- \operatorname{tr}(M' (M_1 - 2I)) \bmf{y}^{\top} M' (M_2 + 2I) M' \bmf{y} &= 0.
\end{align*}
As in the proof of (ii), $M$ is a pentadiagonal Toeplitz matrix, so
expanding $\det(M_{(n)})$ along the last row and column via cofactor
expansion yields the fifth-order linear recurrence (a standard
technique for banded Toeplitz matrices; cf.~\cite{elouafi2013note})
\begin{align*}
\det(M_{(n)}) &= (2b-3)\det(M_{(n-1)}) - (b^2-2b+2)\det(M_{(n-2)}) \\
&+ (b^2-2b+2)\det(M_{(n-3)}) + (3-2b)\det(M_{(n-4)}) + \det(M_{(n-5)}),
\end{align*}
where $M_{(n)}$ is the $n \times n$ principal submatrix of $M$. By strong
induction, $\det(M_{(n)})$ is of degree $n$, so entries of $M'$ have
degree at most $n-1$. Thus the total degree is $3(n-1)$, with no
cancellation at the highest degree, as can be verified directly by
taking $\mathbf{y}=\mathbf{1}$.
Since $\operatorname{tr}(M'(M_1 - 2I)) \neq 0$ and has degree $n-1$, the
final expression has $2(n-1)$ solutions.

Including the trivial solution with $a_1 = 0$, the total number of solutions is $2n-3$. Furthermore, there exist four distinct solutions for $(a_0, a_1, a_2)$ that satisfy the given restriction, leading to a total of $4(2n-3)$ solutions.

The proof of iv) (i.e. the case $a_0 = -a_1 - a_2$) is analogous.\qedhere
\end{enumerate}\end{proof}

\subsection{Maximum at a Non-Invertible Point}\label{Section:maximo_no_invertible}
Recall from the introduction that the invertibility of a MA process can be characterized by the nature of the roots of the associated characteristic polynomial. It is possible that a local maximum of the likelihood function corresponds to a non-invertible process, even when the true data-generating process is invertible.  
\cite{cryer1981small} and \cite{anderson1986noninvertible} showed via simulation that the likelihood function can be bimodal: besides the maximum near the true invertible value, a second local maximum can appear at a non-invertible point and, in small samples, exceed the invertible one in likelihood value. This is more likely the closer the true parameter is to the invertibility boundary, where the surface flattens and both maxima become close in height. We now extend this analysis to the MA($2$) process.

Consider the MA($2$) process
\begin{equation*}
Y_t = a_0 Z_t + a_1 Z_{t-1} + a_2 Z_{t-2},
\end{equation*}
where $(Z_t)_{t \in \mathbb{Z}}$ are i.i.d. standard normal and $a_i \in \mathbb{R}$.

For this section, it is convenient to reparametrize the model in terms of $\theta_1 = a_1/a_0$, $\theta_2 = a_2/a_0$, and $\sigma^2 = a_0^2$. Under this change of variables, the process can be written as
\begin{equation*}
Y_t = \sigma\left(Z_t + \theta_1 Z_{t-1} + \theta_2 Z_{t-2}\right),
\qquad \theta_i \in \mathbb{R}, \ \sigma > 0.
\end{equation*}
Let $\bmf{y} = (y_1, \ldots, y_n)^{\top}$ be an observed sample vector. The autocovariance matrix of $Y_t$ can be expressed as
\begin{equation*}
\Sigma = \sigma^2_0 R, \quad \text{where } \,   \sigma^2_0 = \sigma^2 (1 + \theta_1^2 + \theta_2^2),
\end{equation*}
and $R$ is the autocorrelation matrix, which is Toeplitz satisfying $R_{ii} = 1$ for all $i$, and
\begin{equation*}
R_{i,i+1} = R_{i+1,i} = \frac{\theta_1(1 + \theta_2)}{1 + \theta_1^2 + \theta_2^2}, 
\qquad
R_{i,i+2} = R_{i+2,i} = \frac{\theta_2}{1 + \theta_1^2 + \theta_2^2},
\end{equation*}
with all remaining entries equal to zero.
This reparametrization allows the log-likelihood function \eqref{eq:likMA} to be written in terms of $\theta_1, \theta_2$ and $\sigma^2_0$ as
\begin{equation*}
\ell(\theta_1, \theta_2, \sigma^2_0) = -\frac{N}{2} \log \sigma^2_0 - \frac{1}{2} \log \det(R) - \frac{1}{2\sigma^2_0} \bmf{y}^\top R^{-1} \bmf{y},
\end{equation*}
Maximizing with respect to $\sigma^2_0$ gives
\begin{equation*}
\widehat{\sigma}^2_0 = \frac{1}{n} \bmf{y}^{\top} R^{-1} \bmf{y},
\end{equation*}
which, substituted back, yields the \emph{concentrated likelihood function} \cite{anderson1986noninvertible}:
\begin{equation*}
M(\theta_1, \theta_2) = -\log|R| + n \log\!\left(\bmf{y}^{\top} R^{-1} \bmf{y}\right).
\end{equation*}
To investigate the possibility of a local maximum in the non-invertible region, we analyze the Hessian matrix of \(M\).  
In compact form, the elements of the Hessian are given by
\begin{align*}
H_{ij} =& -\mathrm{tr}\!\Big(
 -R^{-1}\frac{\partial R}{\partial \theta_i} R^{-1}\frac{\partial R}{\partial \theta_j}
   + R^{-1}\frac{\partial^2 R}{\partial \theta_i \partial \theta_j}
\Big)  + n \frac{1}{\bmf{y}^{\top} R^{-1} \bmf{y}} \Big[
-2\,\bmf{y}^{\top} R^{-1} \frac{\partial R}{\partial \theta_i} R^{-1} \frac{\partial R}{\partial \theta_j} R^{-1} \bmf{y}
\\ &+ \bmf{y}^{\top} R^{-1} \frac{\partial^2 R}{\partial \theta_i \partial \theta_j} R^{-1} \bmf{y} 
+ (\bmf{y}^{\top} R^{-1} \frac{\partial R}{\partial \theta_i} R^{-1} \bmf{y})(\bmf{y}^{\top} R^{-1} \frac{\partial R}{\partial \theta_j} R^{-1} \bmf{y})
\Big],
\end{align*}
and the condition for a local maximum is that $H$ be negative definite.

Consider the point \((\theta_1, \theta_2) = (0, -1)\), which corresponds to the the non-invertible case i) where $a_0+a_2=0$ and $a_1=0$. Evaluating at the Hessian we obtain 
\begin{align*}
H_{11} &= -\mathrm{tr}\!\left(R^{-1}\frac{\partial^2 R}{\partial \theta_1^2}\right)
+ n \frac{\bmf{y}^{\top} R^{-1} \frac{\partial^2 R}{\partial \theta_1^2} R^{-1} \bmf{y}}{\bmf{y}^{\top} R^{-1} \bmf{y}},\\
H_{12} &= -\mathrm{tr}\!\left(R^{-1}\frac{\partial^2 R}{\partial \theta_1 \partial \theta_2}\right)
+ n \frac{\bmf{y}^{\top} R^{-1} \frac{\partial^2 R}{\partial \theta_1 \partial \theta_2} R^{-1} y}{\bmf{y}^{\top} R^{-1} \bmf{y}}.
\end{align*}

The computation of \(H_{22}\) requires the spectral decomposition of block-tridiagonal matrices, which can be separated into two submatrices of dimensions $\lfloor n/2 \rfloor$ and $\lceil n/2 \rceil$. The eigenvalues of these blocks are
\begin{equation*}
\lambda_k = 1-\cos\!\Big(\frac{k \pi}{\lfloor n/2\rfloor+1}\Big),\qquad
\lambda'_k = 1-\cos\!\Big(\frac{k \pi}{\lceil n/2\rceil+1}\Big),
\end{equation*}
with corresponding sine-based eigenvectors.  
Letting $\Lambda$ denote the diagonal matrix of eigenvalues of $R$ and $\Lambda'$ the diagonal matrix of eigenvalues of $\frac{\partial^2 R}{\partial \theta_2^2}(0,-1)$, we can write
\begin{equation*}
H_{22} = -\mathrm{tr}\!\big(-\Lambda^{-2} (\Lambda')^2 + \Lambda^{-1} \Lambda'\big)
+ n \frac{1}{\mathbf{z}^\top \Lambda^{-1} \mathbf{z}} \Big[-2\, \mathbf{z}^\top \Lambda^{-3} (\Lambda')^2 \mathbf{z} + \mathbf{z}^\top \Lambda^{-2} \Lambda' \mathbf{z} + (\mathbf{z}^\top \Lambda^{-2} \Lambda' \mathbf{z})^2\Big],
\end{equation*}
where $z$ is the representation of $\mathbf{y}$ in the eigenbasis of $R$.

Finally, the probability of obtaining a local maximum at the non-invertible point $(\theta_1, \theta_2) = (0, -1)$ was evaluated via Monte Carlo simulation for several sample sizes and values of $\theta_2^*$.  
The results show that this probability is substantial in small samples and decreases as $n$ increases, except for the degenerate case $\theta_2 = -1$.  
These results are summarized in Table~\ref{tab:diagbox}.

\begin{table}[H]
\centering
\begin{tabular}{|l|c|c|c|c|c|}
\toprule
\diagbox[width=5em]{$\theta_2^*$}{$n$} & \textbf{10} & \textbf{25} & \textbf{50} & \textbf{75} & \textbf{100} \\
\midrule
 $0$ & $9.84\%$ & $1.03\%$ & $0.05\%$ & $0.00\%$ & $0.00\%$ \\
 $-0.2$ & $15.37\%$ & $2.35\%$ & $0.15\%$ & $0.01\%$ & $0.00\%$ \\
 $-0.4$ & $25.16\%$ & $5.13\%$ & $0.62\%$ & $0.05\%$ & $0.01\%$ \\
 $-0.6$ & $40.29\%$ & $15.17\%$ & $2.75\%$ & $0.45\%$ & $0.02\%$ \\
 $-0.8$ & $53.31\%$ & $38.81\%$ & $16.91\%$ & $7.70\%$ & $3.37\%$ \\
 $-1$   & $58.20\%$ & $60.20\%$ & $61.34\%$ & $61.04\%$ & $61.19\%$ \\
\bottomrule
\end{tabular}
\caption{Probability of observing a local maximum at $(\theta_1,\theta_2)=(0,-1)$ with data simulated from an MA(2) with parameter $(0,\theta_2^*)$.}
\label{tab:diagbox}
\end{table}

\noindent
In summary, consistent with the findings of \cite{cryer1981small,anderson1986noninvertible} for the MA($1$) process, our results show that the likelihood function can attain maxima parameter values corresponding to non-invertible processes even when the underlying process is invertible.  
This property should be considered when estimating MA($2$) models in small samples and when interpreting maximum likelihood estimates near the boundary of invertibility.

\subsection{Simulation Study of Parameter Estimation}\label{section:Estimation_MA_process}

We conduct a simulation study to compare our numerical algebraic geometry approach  against a standard numerical maximization approach. 
A total of 500 independent samples are generated, each consisting of $n = 10$ observations drawn from an MA($2$) process defined by
\begin{equation*}
Y_t = a_0 Z_t + a_1 Z_{t-1} + a_2 Z_{t-2},
\qquad Z_t \sim N(0,1),
\end{equation*}
with parameters fixed at $a_0 = 1$, $a_1 = 0.5$, and $a_2 = -0.3$.

The model parameters are estimated via maximum likelihood using two distinct approaches.  In the first approach, we perform numerical maximization of the likelihood function using the \texttt{optim} method, which applies standard gradient-based optimization routines, using the output of the innovations algorithm \cite{BrockwellDavis1988,hannan1982recursive,kailath1968innovations} as the initial value of the optimization. In the second approach, we adopt the viewpoint of numerical algebraic geometry: the likelihood function is differentiated with respect to the moving average parameters, and the resulting system of polynomial equations is solved using \texttt{HomotopyContinuation.jl} \cite{HomotopyContinuation.jl}. This method identifies all stationary points of the likelihood, after which the global maximum is selected by direct evaluation of the likelihood function at each critical point.

Tables~\ref{Tabla:estim_optim} and~\ref{Tabla:estim_Homotopy} summarize the estimation results obtained under both methods.  
Each table reports the true parameter values, the sample means of the estimates, the empirical bias, and the standard deviation across the 500 replications.

\begin{table}[H]
\centering
\begin{tabular}{lcccc}
\toprule
 & True Value & Mean & Bias & Std \\
\midrule
\(a_0\) & 1 & 0.947
& -0.053 & 0.273 \\
\(a_1\) & 0.5 & 0.4831 & -0.016 & 0.460 \\
\(a_2\) & -0.3 & -0.156 & 0.144 & 0.544 \\
\bottomrule
\end{tabular}
\caption{MA($2$) parameter estimation results using \texttt{optim}  ($n = 10$).}
\label{Tabla:estim_optim}
\end{table}

\begin{table}[H]
\centering
\begin{tabular}{lcccc}
\toprule
 & True Value & Mean & Bias & Std \\
\midrule
\(a_0\) & 1 & 0.9828 & -0.072 & 0.130 \\
\(a_1\) & 0.5 & 0.4873 & -0.012 & 0.358 \\
\(a_2\) & -0.3 & -0.131 & 0.169 & 0.325 \\
\bottomrule
\end{tabular}
\caption{MA($2$) parameter estimation results  using \texttt{HomotopyContinuation.jl} ($n = 10$).}
\label{Tabla:estim_Homotopy}
\end{table}

Overall, the results indicate that the \texttt{HomotopyContinuation.jl} \cite{HomotopyContinuation.jl} approach yields slightly smaller bias for parameters $a_0$ and $a_1$ compared to the numerical optimization method, although it produces higher variability in $a_1$ and a modest reduction in the standard deviation of $a_0$. A key advantage of the homotopy continuation approach is that it computes \emph{all} solutions of the underlying polynomial system simultaneously, thereby providing access to the complete set of critical points of the likelihood surface, including the global maximum likelihood (ML) estimate. In contrast, the numerical optimization method \texttt{optim} only converges to a single solution, whose identity is highly sensitive to the choice of initial conditions and may correspond to a local rather than the global maximum.
These findings suggest that, despite its computational complexity, the homotopy-based estimation method provides a valuable benchmark for verifying the presence of local maxima in the likelihood surface and for ensuring that the global maximum likelihood estimator is correctly identified.

\section{\texorpdfstring{MA($3$)}{MA(3)}}
\label{Section:MA(3)}
The MA($3$) model is considerably more challenging than MA($2$): the system gains 
an additional score equation and the number of solutions grows rapidly with $n$. 
As in the MA($2$) case, we use \texttt{HomotopyContinuation.jl}~\cite{HomotopyContinuation.jl} 
to compute the ML degree, with results reported in 
Table~\ref{tab:ml3_degree_combined}.

\begin{table}[H]
    \centering
    \begin{tabular}{|c|cccccc|}
\hline
Sample Size $n$ & 4 & 5 & 6 & 7 & 8 & 9 \\
\hline
Parametric ML degree & 262 & $\geq 1220$ & - & - & - & - \\
Implicit ML degree & 3 & 25 & 65 & 130 & 205 & 300 \\
\hline
\end{tabular}
    \caption{ML degrees of a $\text{MA($3$)}$ process.}
     \label{tab:ml3_degree_combined}
\end{table}

The computation for the parametric ML degree in the case $n=5$  was terminated before 
completion due to computational time constraints; hence, $1220$ represents 
a lower bound on the ML degree in that case.

Unlike the MA($2$) case, Gröbner basis methods are computationally prohibitive 
here, and as reported in~\cite{autocovariance}, no explicit solutions could be 
obtained for general $n$. Nevertheless, for $n = 4$, we present a 
Gröbner basis computation that remains feasible for a particular form of the data and yields the following closed-form result.

\begin{proposition}
Consider a \emph{MA($3$)} process with $n = 4$ sample points of the form
$\bmf{y} = (y_1, y_2, y_3, y_1+y_2-y_3)$.
The two (not necessarily real) solutions $(a_0,a_1,a_2,a_3)$ are given as follows, where we assume $y_2+y_3 \neq 0$ and all denominators appearing are nonzero.
\begin{equation*}
  a_0 \;=\; \frac{-P \pm \sqrt{P^2 - 4QR}}{2Q},
\end{equation*}
where
\begin{small}
\begin{align*}
  Q &= 24(y_1 + 2y_2 + y_3)^2,\\[4pt]
  P &= -4y_1^4 - 26y_1^3y_2 - 60y_1^2y_2^2 - 66y_1y_2^3 - 36y_2^4
       - 10y_1^3y_3 - 42y_1^2y_2y_3 - 78y_1y_2^2y_3 - 78y_2^3y_3\\
    &\quad - 6y_1^2y_3^2 - 36y_1y_2y_3^2 - 78y_2^2y_3^2
           - 8y_1y_3^3 - 40y_2y_3^3 - 8y_3^4,\\[4pt]
  R &= y_1^5y_2 + 5y_1^4y_2^2 + 12y_1^3y_2^3 + 15y_1^2y_2^4 + 9y_1y_2^5
     + y_1^5y_3 + y_1^4y_2y_3 + 2y_1^3y_2^2y_3 + 12y_1^2y_2^3y_3
     + 33y_1y_2^4y_3 \\
    &\quad + 27y_2^5y_3 + 8y_1^3y_2y_3^2 + 6y_1^2y_2^2y_3^2 - 12y_1y_2^3y_3^2
     + 2y_1^3y_3^3 + 8y_1^2y_2y_3^3 + 10y_1y_2^2y_3^3 + 12y_2^3y_3^3\\
    &\quad - y_1^2y_3^4 + 14y_1y_2y_3^4 + 25y_2^2y_3^4
     + 4y_2y_3^5 - 2y_3^6,
\end{align*}
\end{small}
and the remaining unknowns follow by back-substitution:
\begin{small}
\begin{align*}
  a_1 &= \frac{-F_1\,a_0 + G_1}{E_1},\\[6pt]
  a_2 &= \frac{-E_2\,a_1 - F_2\,a_0 + H_2}{D_2},\\[6pt]
  a_3 &= \frac{6(y_1+y_2)\,a_2 + 3(y_2+y_3)\,a_1
              - 6(y_1+y_2)\,a_0 + K}{3(y_2+y_3)},
\end{align*}
\end{small}
where
\begin{small}
\begin{align*}
  E_1 &= 6\!\left(2y_1^3 + 3y_1^2y_2 - 3y_1^2y_3
         + 18y_1y_2y_3 - 9y_2^3 + 21y_2^2y_3 - 8y_3^3\right),\\
  F_1 &= 6y_1^3 - 18y_1^2y_2 - 66y_1y_2^2 - 90y_2^3
        - 36y_1^2y_3 + 48y_1y_2y_3 + 84y_2^2y_3
        - 12y_1y_3^2 - 12y_2y_3^2 - 48y_3^3,\\
  G_1 &= y_1^5 + 2y_1^4y_2 - 3y_1^3y_2^2 - 21y_1^2y_2^3
        - 36y_1y_2^4 - 27y_2^5
        - 3y_1^4y_3 + 4y_1^3y_2y_3 + 30y_1^2y_2^2y_3
        + 48y_1y_2^3y_3 + 9y_2^4y_3\\
      &\quad - y_1^3y_3^2 - 3y_1^2y_2y_3^2 + 33y_1y_2^2y_3^2
        + 27y_2^3y_3^2 - 10y_1^2y_3^3 - 8y_1y_2y_3^3
        - 6y_2^2y_3^3 - 6y_1y_3^4 - 22y_2y_3^4 - 8y_3^5,\\[4pt]
  D_2 &= 2\!\left(11y_1^2 - 27y_2^2 - 22y_1y_3
         - 6y_2y_3 - 16y_3^2\right),\\
  E_2 &= 32y_1^2 + 12y_1y_2 - 72y_2^2 - 52y_1y_3
        + 228y_2y_3 - 52y_3^2,\\
  F_2 &= -6y_1^2 - 44y_1y_2 - 66y_2^2 - 32y_1y_3
        + 200y_2y_3 - 28y_3^2,\\
  H_2 &= y_1^4 - 6y_1^3y_2 + y_1^2y_2^2 + 42y_1y_2^3 + 36y_2^4
        - 10y_1^3y_3 - 12y_1^2y_2y_3 - 12y_1y_2^2y_3 - 42y_2^3y_3\\
      &\quad + 25y_1^2y_3^2 + 32y_1y_2y_3^2 - 61y_2^2y_3^2
        + 18y_1y_3^3 - 14y_2y_3^3 + 2y_3^4,\\[4pt]
  K   &= y_1^3 + 2y_1^2y_2 - y_1^2y_3 - 4y_1y_2y_3
        - y_1y_3^2 + 2y_2y_3^2 + y_3^3.
\end{align*}
\end{small}
\end{proposition}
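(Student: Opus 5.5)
The plan mirrors the proof of Theorem~\ref{thm:MA2n3}: reduce the score equations to a triangular polynomial system by an elimination ordering, and then read off the displayed formulas. First I would write the score equations of \eqref{eq:likMA} for $n=4$ and $q=3$. The matrix $\Sigma(a)$ is the full $4\times 4$ symmetric Toeplitz matrix with entries $\gamma_0,\dots,\gamma_3$, where $\gamma_h=\sum_k a_k a_{k+h}$. The derivatives are
\[
\frac{\partial \ell}{\partial a_i} = -\operatorname{tr}\Big(\Sigma^{-1}\frac{\partial\Sigma}{\partial a_i}\Big) + \bmf{y}^\top\Sigma^{-1}\frac{\partial\Sigma}{\partial a_i}\Sigma^{-1}\bmf{y}, \qquad i=0,\dots,3.
\]
To make these polynomial, I would multiply by $\det(\Sigma)^2$ and replace $\Sigma^{-1}$ by $\operatorname{adj}(\Sigma)/\det\Sigma$. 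I would then substitute the special data $y_4=y_1+y_2-y_3$ and saturate by $\det(\Sigma)$, and by the factors $(a_0-a_3)$, $(a_0+a_1+a_2+a_3)$ and so on that generate the singular (non-invertible) components. This discards spurious components, exactly as in the MA($2$) analysis leading to Theorem~\ref{teo:soluciones_noinv_ma2}.

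Next I would compute a lexicographic Gr\"obner basis with $a_3>a_2>a_1>a_0$ over the coefficient field $\mathbb{Q}(y_1,y_2,y_3)$. The claim amounts to the reduced basis having shape form. It should contain:
\begin{itemize}
\item a quadratic $Q a_0^2 + P a_0 + R$, with $P,Q,R$ as displayed;
\item a linear relation $E_1 a_1 + F_1 a_0 - G_1$;
\item a linear relation $D_2 a_2 + E_2 a_1 + F_2 a_0 - H_2$;
\item the relation $3(y_2+y_3)a_3 - 6(y_1+y_2)a_2 - 3(y_2+y_3)a_1 + 6(y_1+y_2)a_0 - K$.
\end{itemize}
Given this basis, the quadratic formula yields the two values of $a_0$. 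Since $E_1$, $D_2$ and $3(y_2+y_3)$ are assumed nonzero, each value of $a_0$ extends uniquely by back-substitution. The result is exactly two solutions, counted over the algebraic closure and hence not necessarily real.

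To make the verification independent of the computer algebra trace, I would also check the result directly. I would substitute the parametrized roots into the four cleared score polynomials and reduce modulo the quadratic in $a_0$. Each polynomial should vanish identically in $\mathbb{Q}(y_1,y_2,y_3)[a_0]/(Qa_0^2+Pa_0+R)$. To confirm there are no other solutions, I would count degrees: the ideal generated by these four basis elements has degree $2$, and the basis certifies that the saturated ideal equals it. Working over the rational function field gives the generic statement. The stated hypotheses ($y_2+y_3\neq0$ and the nonvanishing of $Q$, $E_1$, $D_2$) are exactly the conditions under which the basis specializes correctly, by the standard specialization argument for Gr\"obner bases whose leading coefficients do not vanish.

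The main obstacle is the Gr\"obner computation itself. The cleared score equations have high degree in the $a_i$, with coefficients of degree about six in the $y_j$. The saturation step in particular may be prohibitive, which is the very reason the data are restricted to a hyperplane. If the direct computation stalls, I would try two alternatives:
\begin{itemize}
\item work in $\gamma$-coordinates first, computing the implicit system, and then lift through the map $g$, as done for MA($2$);
\item interpolate the coefficients $P,Q,R,\dots$ from many random rational specializations of $(y_1,y_2,y_3)$, and then certify the interpolated formulas by the direct substitution check above.
\end{itemize}
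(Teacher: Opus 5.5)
Your overall strategy (clear denominators, compute a lexicographic Gr\"obner basis, find a triangular shape-form system, back-substitute) is the same as the paper's, whose proof simply exhibits the four basis elements. The gap is the choice of coordinates. You set up the score equations in the MA coefficients, with $\gamma_h=\sum_k a_k a_{k+h}$, and expect the basis to contain $Qa_0^2+Pa_0+R$. This cannot happen.

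\begin{itemize}
\item Since $\Sigma(-a)=\Sigma(a)$, the critical ideal in the $a_i$ is stable under $a\mapsto -a$. An eliminant in $a_0$ with only two roots would therefore have the form $Qa_0^2+R$, whereas $P\neq 0$.
\item By $\nabla_a(\ell\circ g)=J_g^\top\nabla_\gamma\ell$, every implicit critical point lifts to all of its preimages under $g$. There are generically $16$ of these, from sign changes and root flips. So even after your saturation you would find generically at least $32$ points, not $2$.
\item A homogeneity check shows what the unknowns really are. $Q,P,R$ have degrees $2,4,6$ in $\mathbf{y}$, so the quadratic is homogeneous only if the unknown scales like $\mathbf{y}^2$; the same holds for the other three relations. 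Critical MA coefficients, by contrast, scale like $\mathbf{y}$, because $\ell_{c\mathbf{y}}(ca)=\ell_{\mathbf{y}}(a)-2n\log c$.
\end{itemize}

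Hence the displayed quantities are the autocovariances $\gamma_0,\dots,\gamma_3$. The proposition is the MA($3$) analogue of Theorem~\ref{thm:MA2n3}. The paper compares its count $2$ with the \emph{implicit} ML degree $3$ of Table~\ref{tab:ml3_degree_combined}, not with the parametric degree $262$.

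To repair the argument, run your computation on the implicit equations $\operatorname{tr}(\Sigma^{-1}E_h)=\mathbf{y}^\top\Sigma^{-1}E_h\Sigma^{-1}\mathbf{y}$, $h=0,\dots,3$. Here $\Sigma(\gamma)$ is the $4\times 4$ symmetric Toeplitz matrix and $E_h=\partial\Sigma/\partial\gamma_h$. Use lex order $\gamma_3>\gamma_2>\gamma_1>\gamma_0$ and saturate by $\det\Sigma(\gamma)$ only. The Jacobian-singular components you planned to remove do not arise in these coordinates. Your fallback of lifting through $g$ goes in the wrong direction, since it multiplies the solution count instead of describing the $\gamma$'s themselves.

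In your substitution check, use the basis element as it appears in the paper's proof, $D_2\gamma_2+E_2\gamma_1+F_2\gamma_0+H_2$, so that $\gamma_2=(-E_2\gamma_1-F_2\gamma_0-H_2)/D_2$. The $+H_2$ in the displayed back-substitution formula, and in your list, is a sign slip. For example, for $\mathbf{y}=(1,1,0,2)$ the quadratic gives $\gamma_0\in\{1/2,\,7/18\}$. The root $1/2$ back-substitutes to $\gamma=(1/2,0,1/2,1)$, where $\Sigma^{-1}\mathbf{y}=(1,1,-1,1)^\top$ and all four score equations hold. With $+H_2$ one instead gets a point violating $\mathbf{y}^\top\Sigma^{-1}\mathbf{y}=n$. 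Every critical point must satisfy this identity, because the Toeplitz model is closed under scaling.
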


\begin{proof}
    A Gröbner basis computations reveals the following equations: 
    \begin{small}
    \begin{gather*}
(24y_1^2+96y_1y_2+96y_2^2+48y_1y_3+96y_2y_3+24y_3^2)a_0^2
+(-4y_1^4-26y_1^3y_2-60y_1^2y_2^2 \nonumber \\
-66y_1y_2^3-36y_2^4
-10y_1^3y_3-42y_1^2y_2y_3-78y_1y_2^2y_3-78y_2^3y_3-6y_1^2y_3^2-36y_1y_2y_3^2
-78y_2^2y_3^2
\nonumber\\
-8y_1y_3^3-40y_2y_3^3-8y_3^4)a_0
+y_1^5y_2+5y_1^4y_2^2+12y_1^3y_2^3+15y_1^2y_2^4
+9y_1y_2^5+y_1^5y_3+y_1^4y_2y_3 \nonumber
\\ +2y_1^3y_2^2y_3+12y_1^2y_2^3y_3+33y_1y_2^4y_3
+27y_2^5y_3+8y_1^3y_2y_3^2+6y_1^2y_2^2y_3^2
-12y_1y_2^3y_3^2+2y_1^3y_3^3
\nonumber\\
+8y_1^2y_2y_3^3+10y_1y_2^2y_3^3+12y_2^3y_3^3
-y_1^2y_3^4+14y_1y_2y_3^4+25y_2^2y_3^4+4y_2y_3^5-2y_3^6 =0,
\\[1ex]
(12y_1^3+18y_1^2y_2-54y_2^3-18y_1^2y_3+108y_1y_2y_3
+126y_2^2y_3-48y_3^3)a_1+(6y_1^3-18y_1^2y_2-66y_1y_2^2
\nonumber \\
-90y_2^3
-36y_1^2y_3+48y_1y_2y_3+84y_2^2y_3
-12y_1y_3^2-12y_2y_3^2-48y_3^3)a_0
-y_1^5-2y_1^4y_2 +3y_1^3y_2^2 \nonumber \\
+21y_1^2y_2^3+36y_1y_2^4+27y_2^5+3y_1^4y_3-4y_1^3y_2y_3
-30y_1^2y_2^2y_3-48y_1y_2^3y_3-9y_2^4y_3+y_1^3y_3^2 +3y_1^2y_2y_3^2
\nonumber\\
-33y_1y_2^2y_3^2-27y_2^3y_3^2
+10y_1^2y_3^3+8y_1y_2y_3^3+6y_2^2y_3^3
+6y_1y_3^4+22y_2y_3^4+8y_3^5=0,
\\[1ex]
(22y_1^2-54y_2^2-44y_1y_3-12y_2y_3-32y_3^2)a_2+(32y_1^2+12y_1y_2-72y_2^2-52y_1y_3
\nonumber\\
+228y_2y_3-52y_3^2)a_1+(-6y_1^2-44y_1y_2-66y_2^2-32y_1y_3+200y_2y_3-28y_3^2)a_0+y_1^4
\nonumber\\
-6y_1^3y_2+y_1^2y_2^2+42y_1y_2^3+36y_2^4-10y_1^3y_3-12y_1^2y_2y_3-12y_1y_2^2y_3-42y_2^3y_3+25y_1^2y_3^2
\nonumber\\
+32y_1y_2y_3^2-61y_2^2y_3^2+18y_1y_3^3-14y_2y_3^3+2y_3^4=0,
\\[1ex]
(3y_2+3y_3)a_3+(-6y_1-6y_2)a_2
+(-3y_2-3y_3)a_1+(6y_1+6y_2)a_0-y_1^3-2y_1^2y_2+y_1^2y_3 \nonumber \\+4y_1y_2y_3
+y_1y_3^2-2y_2y_3^2-y_3^3=0. \qedhere
\end{gather*}
\end{small}
\end{proof}
The degenerate cases where one or more of $y_2+y_3, Q, E_1, D_2$ are zero can also be handled explicitly, so that one can give explicit algebraic expressions for the two solutions. We note that this is one less than the ML degree of 3 corresponding to $n=4$, according to Table \ref{tab:ml3_degree_combined}, due to the special nature of the data that was considered.  

\section{The Composite Likelihood Degree}\label{sec:Composite_Likehood}

The composite likelihood (CL) provides a practical alternative to the full likelihood when the latter is analytically intractable or computationally expensive \cite{varin2011overview}. It is constructed by multiplying a collection of lower-dimensional likelihood objects, each associated with a subset of the data, yielding an estimator that retains consistency and asymptotic normality under suitable regularity conditions \cite{lindsay1988composite}.

Several choices of subsets are possible; here we use the \emph{pairwise composite likelihood}, which is built from pairs of correlated observations $(y_k, y_{k+h})$ at lags $h=1,\ldots,q$. In this setting, the pairwise composite log-likelihood is defined as
\begin{equation*}
cl(a) = \sum_{h=1}^{q}\sum_{k=1}^{n-h} \left[-\frac{1}{2(n-h)}\log \det\left(\Sigma_{(k+h)k}\right) -\frac{1}{2(n-h)}(y_{k},y_{k+h})^{\top}\Sigma^{-1}_{k(k+h)}(y_{k},y_{k+h})\right],
\end{equation*}
where $\Sigma_{ij} = \mathrm{Var}[(Y_i,Y_j)]$ is the $2\times 2$ covariance matrix of $(Y_i,Y_j)$ . Defining the theoretical and empirical covariance matrices of order $h \in \{1,\dots,q\}$ as
\begin{equation*}
\Sigma(h) = \mathrm{Var}[(Y_k, Y_{k+h})], \quad 
\hat{\Sigma}(h) = \frac{1}{n-h}\sum_{k=1}^{n-h}(y_{k},y_{k+h})(y_{k},y_{k+h})^{\top},
\end{equation*}
we can express the composite likelihood more compactly as:
\begin{equation*}
cl(a) = \sum_{h=1}^{q} \left[
-\frac{1}{2}\log\det \left (\Sigma(h)\right) 
-\frac{1}{2}\mathrm{tr}\!\left(\Sigma^{-1}(h)\hat{\Sigma}(h)\right)
\right].
\end{equation*}
An important property of the composite likelihood for MA($q$) models is that its number of (generic) complex critical points depends solely on the order $q$ of the process and not on the sample size $n$. This invariance arises because the covariance matrix involved in each term of the CL is always of fixed dimension $2\times2$, regardless of $n$. Consequently, increasing the sample size only refines the estimation of the empirical covariances $\hat{\Sigma}(h)$, without affecting the number of solutions of the composite likelihood equations or the overall algebraic complexity of the model.

\begin{example}
For a first-order moving average process \emph{MA($1$)}, the covariance matrix of order $h=1$ is given by:
\begin{equation*}
\Sigma(1)=
\begin{bmatrix}
a_0^2+a_1^2 & a_0a_1  \\
a_0a_1 & a_0^2+a_1^2
\end{bmatrix},
\end{equation*}
with inverse
\begin{equation*}
\Sigma^{-1}(1)=\frac{1}{(a_0^2+a_1^2)^{2}-(a_0a_1)^2}
\begin{bmatrix}
a_0^2+a_1^2 & -a_0a_1  \\
-a_0a_1 & a_0^2+a_1^2
\end{bmatrix}.
\end{equation*}
Substituting these expressions into the definition of $cl(a)$ yields:
\begin{align*}
cl(a) &=
-\frac{1}{2((a_0^2+a_1^2)^{2}-(a_0a_1)^2)} 
\sum_{k=1}^{n-1}
\left[
\log\!\left((a_0^2+a_1^2)^{2}-(a_0a_1)^2\right)+\right.
\\ & \left.(\hat{\Sigma}_{11}(1)+\hat{\Sigma}_{22}(1))(a_0^2+a_1^2)
-2a_0a_1 \hat{\Sigma}_{12}(1)
\right].
\end{align*}

This expression highlights that the composite likelihood for the \emph{MA($1$)} model has the same algebraic structure as the  likelihood function for $n=2$, but with terms depending on the empirical covariance $\hat{\Sigma}(1)$ rather than directly on the raw observations.

\end{example}

\subsection{ Implicit Composite Likelihood Degree }

The main motivation for formulating the implicit composite likelihood, ie  in terms of the autocovariance parameters 
\(\gamma = (\gamma_0, \gamma_1, \ldots, \gamma_q)\),  is to reduce the algebraic degree of the estimating equations.  
Although the implicit CL degree simplifies the estimating equations compared to the $a_i$-parametrization, the score equations still become computationally intractable as the model order increases
Consider the implicit composite likelihood function.  
For each lag $h$, we define the corresponding covariance and precision matrices as
\begin{equation*}
    \Sigma(h)=
    \begin{bmatrix}
        \gamma_0 & \gamma_h \\
        \gamma_h & \gamma_0
    \end{bmatrix},
    \quad \quad
    \Sigma(h)^{-1}=
    \frac{1}{\gamma_0^2-\gamma_h^2}
    \begin{bmatrix}
        \gamma_0 & -\gamma_h \\
        -\gamma_h & \gamma_0
    \end{bmatrix}.
\end{equation*}
Similarly, the empirical covariance matrix is defined as
\begin{equation*}
    \hat{\Sigma}(h) = \sum_{k=1}^{n-h} (y_k, y_{k+h})(y_k, y_{k+h})^\top.
\end{equation*}
Then, the composite likelihood function can be written as
\begin{equation*}
    cl(\gamma)
    = \sum_{h=1}^{q}
    \Big(
        -\log \det\left(\Sigma(h)\right)
        - \mathrm{tr}\big(\Sigma(h)^{-1}\hat{\Sigma}(h)\big)
    \Big).
\end{equation*}
The score equations are obtained by differentiating $cl(\gamma)$ with respect to each component of $\gamma$.  
For $h \neq 0$, we have
\begin{equation*}
    \frac{\partial}{\partial \gamma_h}\Sigma(h)
    =
    \begin{bmatrix}
        0 & 1 \\
        1 & 0
    \end{bmatrix},
\end{equation*}
and consequently
\begin{align*}
    \frac{\partial cl}{\partial \gamma_h}(\gamma)
    &= 
    \mathrm{tr}\!\left(\Sigma^{-1}(h)\frac{\partial}{\partial \gamma_h}\Sigma(h)\right)
    - \mathrm{tr}\!\left(\Sigma^{-1}(h)\frac{\partial}{\partial \gamma_h}\Sigma(h)\Sigma^{-1}(h)\hat{\Sigma}(h)\right) \\[2mm]
    &= 
    \frac{-2\gamma_h}{\gamma_0^2-\gamma_h^2}
    - \frac{
        2\gamma_0^2 \hat{\Sigma}_{12}(h)
        + 2\gamma_h^2 \hat{\Sigma}_{12}(h)
        - 2(\hat{\Sigma}_{11}(h)+\hat{\Sigma}_{22}(h)) \gamma_0 \gamma_h
    }{(\gamma_0^2-\gamma_h^2)^2}.
\end{align*}
For the variance parameter \(\gamma_0\), the derivative is given by
\begin{align*}
    \frac{\partial cl}{\partial \gamma_0}(\gamma)
    &= \sum_{h=1}^{q}
    \Big[
        \mathrm{tr}\!\left(\Sigma^{-1}(h)\right)
        - \mathrm{tr}\!\left(\Sigma^{-1}(h)\Sigma^{-1}(h)\hat{\Sigma}(h)\right)
    \Big] \\[2mm]
    &= \sum_{h=1}^{q}
    \frac{2\gamma_0}{\gamma_0^2-\gamma_h^2}
    - \frac{
        \gamma_0^2(\hat{\Sigma}_{11}(h)+\hat{\Sigma}_{22}(h))
        - 4\gamma_0\gamma_h \hat{\Sigma}_{12}(h)
        + \gamma_h^2(\hat{\Sigma}_{11}(h)+\hat{\Sigma}_{22}(h))
    }{(\gamma_0^2-\gamma_h^2)^2}.
\end{align*}
We compute the implicit and parametric composite likelihood degree (CL degree) with \cite{HomotopyContinuation.jl} for diferents models ($q$), the results obtained are summarized in Table~\ref{tab:cl_degree}. Entries marked with -- correspond to cases that could not be computed within a reasonable time limit (1 day).
\begin{table}[H]
    \centering
    \begin{tabular}{|c|c|c|c|c|c|c|c|}
\hline
MA$(q)$ & 1 & 2 & 3 & 4 & 5 & 6 & 7 \\
\hline
Parametric CL degree & 8 & 122 & $\geq$ 3698 & -- & -- & -- & -- \\
\hline
Implicit CL degree & 1 & 7 & 25 & 79 & 234 & 671 & 2159 \\
\hline
\end{tabular}
    \caption{Composite likelihood degrees for different MA($q$) process. }
    \label{tab:cl_degree}
\end{table}
Note that for $q = 1$, the composite likelihood coincides with the likelihood
function based on $n = 2$ observations, thus yielding the same results as those reported in Table~\ref{tab:Ma1}. In contrast, for $q = 2$ we already
observe degrees that are lower than those in Table~\ref{tab:MA(2)_gamma_res} for $n\geq 5$.

\begin{example}
For \emph{MA($2$)} the solutions of the parametric CL degree we obtain $122$ and $7$ for the parametric and implicit, respectively. The additional solutions of the parametric correspond to the non-invertible case (i.e. $\det(J_g^{\top})=0$). In
particular we obtain numerically that the solutions are organized in five groups:
\begin{itemize}
    \item $2 \cdot 12$ solutions with $a_0 = a_2$ and $a_1 \neq 0$.
    \item $4 \cdot 5$ solutions with $a_0 + a_1 + a_2 = 0$ and $a_1 \neq 0$.
    \item $4 \cdot 5$ solutions with $a_0 - a_1 + a_2 = 0$ and $a_1 \neq 0$.
    \item $2 \cdot 1$ solutions with $a_0 + a_2 = 0$ and $a_1 = 0$.
    \item $8 \cdot 7$ solutions for which the model is invertible.
\end{itemize}
which are the same cases that we obtain with the parametric ML degree.
\end{example}

\subsection{Simulation Study of Parameter Estimation}\label{sec:Estimation:Composite}
To further evaluate the performance of the composite likelihood approach, we conducted a simulation study analogous to the one performed for the full likelihood (Section~\ref{section:Estimation_MA_process}), now with $n = 500$ observations per sample and using the composite likelihood function in place of the full likelihood.
Tables~\ref{tab:est_cl_optim} and~\ref{tab:est_cl_homotopy} summarize the results obtained with \texttt{optim} and \texttt{HomotopyContinuation.jl} \cite{HomotopyContinuation.jl}, respectively.

\begin{table}[H]
\centering
\begin{tabular}{lcccc}
\toprule
 & True Value & Mean & Bias & Std \\
\midrule
\(a_0\) & 1   & 1.0943 & 0.094 & 0.181 \\
\(a_1\) & 0.5 & 0.6361 & 0.136 & 0.132 \\
\(a_2\) & -0.3 & -0.3453 & -0.045 & 0.170 \\
\bottomrule
\end{tabular}
\caption{MA($2$) parameter estimation results using \texttt{optim} ($n=500$).}
\label{tab:est_cl_optim}
\end{table}

\begin{table}[H]
\centering
\begin{tabular}{lcccc}
\toprule
 & True Value & Mean & Bias & Std \\
\midrule
\(a_0\) & 1   & 1.1450 & 0.145 & 0.055 \\
\(a_1\) & 0.5 & 0.5855 & 0.085 & 0.101 \\
\(a_2\) & -0.3 & -0.3245 & -0.0245 & 0.081 \\
\bottomrule
\end{tabular}
\caption{MA($2$) parameter estimation results using \texttt{HomotopyContinuation.jl} ($n=500$).}
\label{tab:est_cl_homotopy}
\end{table}

Under the composite likelihood framework, \texttt{optim} yields estimators with consistently larger variance across all parameters compared to \texttt{HomotopyContinuation.jl}; although it attains a smaller bias for $a_0$, it performs worse in both bias and variability for $a_1$ and $a_2$. Consistent with the behavior observed for the full likelihood, this reflects the sensitivity of \texttt{optim} to the choice of initial values, whereas \texttt{HomotopyContinuation.jl} remains stable and yields solutions close to the true parameters regardless of the starting point.

\section{Autoregressive Processes}\label{sec:autoregresive}

We now turn to the autoregressive process of order $p$ (AR($p$)), previously introduced in Definition~\ref{Def:Autoregressive}. According to Equation~\eqref{Eq:autoregresive_process}, an autoregressive process can be written as
\begin{equation*}
    X_t = \sum_{i=1}^{p} \phi_i X_{t-i} + Z_t,
\end{equation*}
where $\phi_i\in \mathbb{R}$, $\sigma \in \mathbb{R}$ and $Z_t \sim \mathcal{N}(0, \sigma^2)$ are i.i.d. Gaussian.  Unlike a moving average (MA) process, which is always stationary but not necessarily invertible, an autoregressive (AR) process is always invertible but may fail to be stationary. Stationarity of the AR($p$) model requires that the roots of the characteristic polynomial
\begin{equation*}
    \Phi(z) = 1 - \sum_{i=1}^{p} \phi_i z^i
\end{equation*}
lie outside the unit circle in the complex plane. When this condition holds, the process admits an infinite-order moving average representation and possesses well-defined second-order moments. 

In the AR($p$) case, the autocovariance function $\gamma(h)$ does not have compact support, which prevents working directly with a finite covariance matrix $\Sigma$ as in the moving average case. Moreover, $\gamma(h)$ typically lacks a closed-form expression, being determined recursively through the Yule–Walker equations. 

We consider now $t\in \mathbb{N}^{+}$ and note that for $t > p$, the process can be expressed as
\begin{equation*}
    X_t - \sum_{i=1}^{p} \phi_i X_{t-i} = Z_t,
\end{equation*}
while for the initial $p$ observations ($1 \leq t \leq p$), we have  
\begin{equation*}
\begin{pmatrix}
X_1\\
X_2-\phi X_1\\
\vdots \\
X_{p-1} - \sum_{i=1}^{p-2} \phi_i X_{p-1-i} \\
X_p - \sum_{i=1}^{p-1} \phi_i X_{p-i} \\

\end{pmatrix}
\sim \mathcal{N}_p(0, \sigma^2 \Sigma'),
\end{equation*}
where $\Sigma'$ is an autocovariance matrix whose explicit form is unknown. Since $\sigma^2$ is a multiplicative scalar factor of $\Sigma'$, for simplicity we set $\sigma^2=1$. 
Define 
\begin{equation*}
    D_n = 
    \begin{pmatrix}
        \Sigma' & 0 \\
        0 & I_p
    \end{pmatrix}.
\end{equation*}
Let $X = (X_1, \ldots, X_n)^{\top}$ and define the $n \times n$ upper-triangular matrix $\Phi$ as 
\begin{equation*}
(U_{\phi})_{t,s} =
\begin{cases}
1, & t=s, \\
-\phi_{s-t}, & 1 \le s-t \le p, \\
0, & s-t > p \text{ or } t>s.
\end{cases}
\end{equation*}
Then the first $p$ entries of $U_{\phi} X$ represents the first $p$ initial observations and the remaining $n-p$ entries corresponding to the AR equations.
Defining \(Z = (Z_1, \ldots, Z_n)^{\top}\), we can express the model compactly as:
\begin{equation*}
    X U_{\phi} = Z \sim \mathcal{N}_n(0, D_n),  \Rightarrow \;     X \sim \mathcal{N}_n\left(0, (U_{\phi}^{-1})^{\top} D_n U_{\phi}^{-1}\right),
\end{equation*}
so that the covariance matrix is given by $\Sigma = (U_{\phi}^{-1})^{\top} D_n U_{\phi}^{-1}$.

Since $\Sigma$ is unknown in closed form, it is convenient to work instead with the precision matrix $Q=\Sigma^{-1}$. The following result of \cite{siddiqui1958inversion}  characterizes the structure of $Q$.
\begin{proposition}{\cite{siddiqui1958inversion}}
Let $X_t = \sum_{i=1}^{p} \phi_i X_{t-i} + \epsilon_t$ be a stationary \emph{AR($p$)} process, and set $\phi_0=-1$. Then, the precision matrix $Q$ is given by:
\begin{equation*}
Q_{ij} = 
\begin{cases}
\displaystyle \sum_{k = 0}^{\min(p - i - j,\, j - 1,\, n - j)} \phi_k \, \phi_{k + |j - i|}, & \text{if } i < j, \\[5pt]
Q_{ji}, & \text{if } i > j.
\end{cases}
\end{equation*}
with $\phi_0=-1$.
\end{proposition}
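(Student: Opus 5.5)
The plan is to avoid the unknown block $\Sigma'$ by using the structure of the factorization $\Sigma = (U_{\phi}^{-1})^{\top} D_n U_{\phi}^{-1}$ together with stationarity. Inverting gives
\begin{equation*}
Q = \Sigma^{-1} = U_{\phi}\, D_n^{-1}\, U_{\phi}^{\top},
\end{equation*}
up to the transpose convention fixed by how the equation $XU_\phi = Z$ is read. I will orient it so that the rows of $U_\phi$ indexed by $t>p$ are the AR filters $(-\phi_p,\dots,-\phi_1,1)$, i.e.\ $(\phi_p,\dots,\phi_0)$ up to sign with $\phi_0=-1$.

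The first step is to split $Q$ according to the block structure of $D_n^{-1} = \mathrm{diag}\bigl((\Sigma')^{-1}, I\bigr)$. Write $Q = Q^{(1)} + Q^{(2)}$, where $Q^{(2)}$ collects the contribution of the identity block, i.e.\ the sum over $t=p+1,\dots,n$ of outer products of the filter vector placed in positions $t-p,\dots,t$. A direct computation gives, for $i\le j$,
\begin{equation*}
Q^{(2)}_{ij} = \sum_{k} \phi_k\,\phi_{k+(j-i)},
\end{equation*}
where $k$ ranges over those lags for which both $\phi_k$ and $\phi_{k+j-i}$ come from a common equation $t\in\{p+1,\dots,n\}$. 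This yields the truncation of the summation range at the lower-right end (the dependence on $n-j$). The term $Q^{(1)}$ involves $(\Sigma')^{-1}$, but it is supported in the upper-left $p\times p$ corner, since the first $p$ rows of $U_\phi$ only touch coordinates $1,\dots,p$. Hence every entry of $Q$ with $\max(i,j)>p$ equals $Q^{(2)}_{ij}$ and is already given explicitly.

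The second step handles the corner via stationarity. Since the process is stationary, $\Sigma$ is a symmetric Toeplitz matrix, hence persymmetric: $J\Sigma J = \Sigma$ for the exchange matrix $J$. Therefore $JQJ = Q$, i.e.\ $Q_{ij} = Q_{n+1-j,\,n+1-i}$. Assuming first $n\ge 2p$, any pair $(i,j)$ in the upper-left $p\times p$ corner is mapped to a pair in the lower-right corner with both indices $>p$. That entry has already been computed from $Q^{(2)}$, and the reflection turns the truncation at $n-j$ into a truncation at $j-1$ (respectively $i-1$). This gives the stated form: a sum of products $\phi_k\phi_{k+|j-i|}$ with the range of $k$ cut by the distance to both boundaries and by $p-|j-i|$. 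Symmetry $Q_{ij}=Q_{ji}$ is immediate from $Q=U_\phi D_n^{-1}U_\phi^{\top}$.

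I expect the main obstacle to be the bookkeeping of the summation limits. I would verify them carefully, since the upper limit as printed, $\min(p-i-j,\, j-1,\, n-j)$, appears to need reading as $\min(p-|j-i|,\, i-1,\, n-j)$, and checking the edge cases is where errors creep in. A secondary issue is $n<2p$, where the reflected corner still meets $\{1,\dots,p\}$. There I would instead embed the sample into a longer one of length $N\ge 2p$ and use the Gohberg--Semencul form $Q = AA^{\top} - BB^{\top}$, with $A,B$ triangular Toeplitz built from $(\phi_0,\dots,\phi_p)$. This form follows from the same persymmetry argument and is valid for all $n\ge p$, and expanding it recovers the same entrywise formula.
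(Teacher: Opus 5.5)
For $n\ge 2p$ your argument is the paper's own. You factor $Q=U_\phi D_n^{-1}U_\phi^{\top}$, read off the entries not touched by $(\Sigma')^{-1}$, and fill the upper-left corner by persymmetry $Q_{ij}=Q_{n+1-j,\,n+1-i}$. You handle two points better than the paper does:
\begin{itemize}
\item You note that $(\Sigma')^{-1}$ only affects the $p\times p$ corner. The paper instead calls $D_n^{-1}$ diagonal, which is false in general, since $\Sigma'$ is not diagonal.
\item Your limit $\min(p-|j-i|,\,i-1,\,n-j)$ is the correct one, because the reflection sends $n-j$ to $i-1$. The limit $\min(p-|i-j|,\,j-1,\,n-j)$ at the end of the paper's proof would give $Q_{12}=-\phi_1+\phi_1\phi_2$. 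This contradicts the paper's own AR($2$), $n=8$ example, where $Q_{12}=-\phi_1$.
\end{itemize}

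The gap is your treatment of $n<2p$. There the formula is false, so no expansion can recover it. For $p=2$, $n=3$ one has
\begin{equation*}
Q=\begin{pmatrix} 1 & -\phi_1 & -\phi_2\\ -\phi_1 & 1+\phi_1^2-\phi_2^2 & -\phi_1\\ -\phi_2 & -\phi_1 & 1 \end{pmatrix},
\end{equation*}
whereas the formula predicts $Q_{22}=1+\phi_1^2$. As a check, take $\phi_1=0$, $\phi_2=\tfrac{1}{2}$. The odd- and even-indexed subsequences are then independent AR($1$) chains, so $X_2$ is independent of $(X_1,X_3)$ with variance $\tfrac{4}{3}$, and $Q_{22}=\tfrac{3}{4}\neq 1$.

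In the Gohberg--Semencul form the discrepancy is exactly $-(BB^{\top})_{22}=-\phi_2^2$. When $n<2p$, for $n-p<i\le j\le p$ the $BB^{\top}$ term subtracts products $\phi_k\phi_{k+j-i}$ with $k\ge i$ that $AA^{\top}$ never contained. So the two truncations do not merge into the single range $0\le k\le\min(p-|j-i|,\,i-1,\,n-j)$, and your claim that ``expanding it recovers the same entrywise formula'' fails.

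Embedding into a longer sample does not help either. The precision matrix of $(X_1,\dots,X_n)$ is a Schur complement of the longer one, not a principal submatrix: the matrix above is the leading $3\times 3$ block of the $n=4$ precision matrix minus $vv^{\top}$, with $v=(0,-\phi_2,-\phi_1)^{\top}$. In addition, with $A,B$ built from $(\phi_0,\dots,\phi_p)$, the Gohberg--Semencul form needs $n\ge p+1$, not $n\ge p$; for $n=p=1$ one has $Q=1-\phi^2$, not $1$.

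The correct repair is to add the hypothesis $n\ge 2p$. The paper's proof also uses this tacitly when it asserts that for $j>n-p$ every index $j+k$ exceeds $p$. Under that hypothesis your first two steps are already a complete proof.
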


This formula was proven in \cite{siddiqui1958inversion}; here we provide a simpler argument: 

\begin{proof}
Since $U_\phi$ is upper triangular with ones on the diagonal and $D_n^{-1}$ is diagonal, $Q = U_\phi D_n^{-1} U_\phi^\top$ is symmetric, and the banded structure of $U_\phi$ (bandwidth $p$) implies $Q$ is banded with bandwidth $p$.

For $i<j$, since $(U_\phi)_{i,u}\neq 0$ only for $u\in\{i,\dots,i+p\}$ and $(U_\phi)_{j,u}\neq 0$ only for $u\in\{j,\dots,j+p\}$, the product
\begin{equation*}
Q_{ij} = \sum_{u=1}^{n} (U_\phi)_{i,u}\,(D_n^{-1})_{u,u}\,(U_\phi)_{j,u}
\end{equation*}
only involves $u=j+k$ with $k=0,\dots,\min(p-|i-j|,\,n-j)$. Writing $(U_\phi)_{j,j+k}=-\phi_k$ and $(U_\phi)_{i,j+k}=-\phi_{k+|i-j|}$ (using $\phi_0=-1$ to unify $k=0$ with the diagonal entries), each term becomes $\phi_k\,\phi_{k+|i-j|}\,(D_n^{-1})_{j+k,j+k}$, so that
\begin{equation*}
Q_{ij} = \sum_{k=0}^{\min(p-|i-j|,\,n-j)} \phi_k\,\phi_{k+|i-j|}\,(D_n^{-1})_{j+k,j+k}.
\end{equation*}
For $j>n-p$, all indices $j+k$ exceed $p$, so $(D_n^{-1})_{j+k,j+k}=1$ and the sum reduces exactly to $\sum_{k=0}^{\min(p-|i-j|,\,n-j)} \phi_k\phi_{k+|i-j|}$, determining $Q$ on the last $p$ columns without reference to the boundary terms of $D_n$.

The remaining entries follow by persymmetry: since $X$ is stationary, $\Sigma$ is Toeplitz, hence persymmetric, and therefore so is its inverse $Q$, i.e.\ $Q_{ij}=Q_{n+1-j,\,n+1-i}$. Reflecting the formula above onto the first $p$ columns replaces $n-j$ by $j-1$, yielding
\begin{equation*}
Q_{ij} = \sum_{k=0}^{\min(p-|i-j|,\,j-1,\,n-j)} \phi_k\,\phi_{k+|i-j|}, \qquad i<j,
\end{equation*}
and $Q_{ij}=Q_{ji}$ for $i>j$, completing the proof.
\end{proof}

\begin{example}

In the \emph{AR($2$)} case with $n=8$, the precision matrix $Q$ takes the form:
\begin{equation*} \begin{adjustbox}{max width=\textwidth} \small $\begin{bmatrix} 1 & -\phi_1 & -\phi_2 & 0 & 0 & 0 & 0 & 0 \\ -\phi_1 & 1+\phi_1^2 & -\phi_1+\phi_1\phi_2 & -\phi_2 & 0 & 0 & 0 & 0 \\ -\phi_2 & -\phi_1+\phi_1\phi_2 & 1+\phi_1^2+\phi_2^2 & -\phi_1+\phi_1\phi_2 & -\phi_2 & 0 & 0 & 0 \\ 0 & -\phi_2 & -\phi_1+\phi_1\phi_2 & 1+\phi_1^2+\phi_2^2 & -\phi_1+\phi_1\phi_2 & -\phi_2 & 0 & 0 \\ 0 & 0 & -\phi_2 & -\phi_1+\phi_1\phi_2 & 1+\phi_1^2+\phi_2^2 & -\phi_1+\phi_1\phi_2 & -\phi_2 & 0 \\ 0 & 0 & 0 & -\phi_2 & -\phi_1+\phi_1\phi_2 & 1+\phi_1^2+\phi_2^2 & -\phi_1+\phi_1\phi_2 & -\phi_2 \\ 0 & 0 & 0 & 0 & -\phi_2 & -\phi_1+\phi_1\phi_2 & 1+\phi_1^2 & -\phi_1 \\ 0 & 0 & 0 & 0 & 0 & -\phi_2 & -\phi_1 & 1 \end{bmatrix}$ \end{adjustbox} \end{equation*}
\end{example}

Similarly, for the AR($3$) case, the precision matrix $Q$ extends naturally by including one upper and lower additional band of nonzero entries corresponding to the parameter $\phi_3$.

\subsection{The \texorpdfstring{AR($1$)}{AR(1)} Model}

We now consider the first-order autoregressive process, denoted AR($1$), defined as:
\begin{equation*}
    X_t = \phi X_{t-1} + Z_t, \quad Z_t \sim \mathcal{N}(0, \sigma^2), \quad t = 1, \dots, n.
\end{equation*}
Provided that $|\phi| < 1$, the process is stationary with variance
$
    \operatorname{Var}(X_t) = \dfrac{\sigma^2}{1 - \phi^2},
$ and the log-likelihood function (up to an additive constant) is
\begin{equation*}
    \ell(\phi,\sigma^2) \propto -\log \det \Sigma - \mathbf{x}^{\top} \Sigma^{-1} \mathbf{x},
\end{equation*}
where $\mathbf{x} = (x_1, \dots, x_n)^{\top}$ and $\Sigma$ denotes the covariance matrix of $(X_1,\dots, X_n)^{\top}$.  The ML solutions are given by the following theorem

\begin{theorem}
The ML degree for the \emph{AR($1$)} with parameters $(\phi,\sigma^2)$  is $3$ for any sample size $n>1$. 
Defining the constants
\begin{align*}
b_3 = -(n-1) \sum_{i=2}^{n-1} x_i^2, \quad  b_2 = (n-2) \sum_{i=1}^{n-1} x_i x_{i+1}, \\ b_1 = (n+1) \sum_{i=2}^{n-1} x_i^2 + (x_1^2 + x_n^2), \quad  b_0 = -n\sum_{i=1}^{n-1} x_i x_{i+1},
\end{align*}
 the three solutions of $\phi$ are given by 
    \begin{equation*}
\phi_k = -\frac{b_2}{3b_3}
+ \frac{1}{3b_3}\left(
\omega_k \sqrt[3]{U} + \omega_k^2 \sqrt[3]{V}
\right),
\qquad k = 1,2,3
\end{equation*}
where
\begin{align*}
p &= \frac{3b_3 b_1 - b_2^2}{3b_3^2}, 
\qquad
q = \frac{2b_2^3 - 9b_3 b_2 b_1 + 27b_3^2 b_0}{27b_3^3}, \\[1mm]
\Delta &= -\left(\frac{q}{2}\right)^2 - \left(\frac{p}{3}\right)^3, \qquad
U = -\frac{q}{2} + \sqrt{\Delta}, 
\qquad
V = -\frac{q}{2} - \sqrt{\Delta}, \\[1mm]
\omega_1 &= 1, \quad 
\omega_2 = e^{2\pi i/3} = -\tfrac{1}{2} + i\tfrac{\sqrt{3}}{2}, \quad 
\omega_3 = e^{4\pi i/3} = -\tfrac{1}{2} - i\tfrac{\sqrt{3}}{2}.
\end{align*}
and $\sigma^2$ is then determined by:
\begin{equation*}
    \sigma^2=\frac{1}{n}\mathbf{x}^{\top}R^{-1}\mathbf{x}
\end{equation*}
with $R=R(\phi)$ the autocorrelation matrix of $(X_1,\dots,X_n)^{\top}$.
\end{theorem}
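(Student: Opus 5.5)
The plan is to avoid the dense matrix $\Sigma$ and work with the precision matrix given by the proposition of \cite{siddiqui1958inversion}, then reduce the score equations to a single univariate polynomial in $\phi$. For $p=1$ that proposition (with $\sigma^2$ restored) gives $\Sigma^{-1}=\sigma^{-2}Q(\phi)$. Here $Q$ is tridiagonal, with diagonal $(1,1+\phi^2,\dots,1+\phi^2,1)$ and off-diagonal entries $-\phi$. Writing $Q=U_\phi D_n^{-1}U_\phi^\top$, whose factors have unit-diagonal triangular structure and $\det D_n^{-1}=1-\phi^2$, gives $\det Q=1-\phi^2$. Hence, up to constants,
\begin{equation*}
\ell(\phi,\sigma^2)= -n\log\sigma^2+\log(1-\phi^2)-\frac{1}{\sigma^2}\,\mathbf{x}^\top Q(\phi)\mathbf{x}.
\end{equation*}
The quadratic form is explicit:
\begin{equation*}
\mathbf{x}^\top Q\mathbf{x}=S_0+\phi^2 S_1-2\phi P,
\end{equation*}
where $S_0=\sum_{i=1}^n x_i^2$, $S_1=\sum_{i=2}^{n-1}x_i^2$ and $P=\sum_{i=1}^{n-1}x_ix_{i+1}$.

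First I will set $\partial\ell/\partial\sigma^2=0$. This gives the unique value $\sigma^2=\tfrac1n\mathbf{x}^\top Q\mathbf{x}$, which is the stated formula $\tfrac1n\mathbf{x}^\top R^{-1}\mathbf{x}$ once the normalization $\Sigma=\sigma^2R$ is matched with $Q$. Substituting it back leaves the profile function $\log(1-\phi^2)-n\log(S_0+\phi^2S_1-2\phi P)$. Differentiating and clearing the denominators $(1-\phi^2)(S_0+\phi^2S_1-2\phi P)$ gives
\begin{equation*}
-\phi\,(S_0+\phi^2S_1-2\phi P)-n(1-\phi^2)(\phi S_1-P)=0 .
\end{equation*}
Expanding and multiplying by $-1$ yields $b_3\phi^3+b_2\phi^2+b_1\phi+b_0=0$, with exactly the stated coefficients. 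The only nontrivial identification is the linear coefficient: $S_0+nS_1=(n+1)S_1+x_1^2+x_n^2$.

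Next I will apply Cardano's formula after the standard depression $\phi=t-b_2/(3b_3)$. This produces $p$, $q$, $\Delta$, $U$, $V$ and the three roots $\phi_k$, with the cube roots of $U$ and $V$ chosen so that their product is $-p/3$. Each root $\phi_k$ determines $\sigma^2$ uniquely, so every critical point corresponds to exactly one pair $(\phi,\sigma^2)$.

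The main obstacle is justifying that the count is exactly $3$ for generic data and every $n>1$. This requires four genericity conditions:
\begin{enumerate}
\item the leading coefficient $b_3\neq0$;
\item the cubic has three distinct roots, i.e. its discriminant is not identically zero in $\mathbf{x}$;
\item no root satisfies $\phi^2=1$ or $\mathbf{x}^\top Q\mathbf{x}=0$, since those points were introduced by clearing denominators and are not genuine critical points;
\item the count holds for all $n>1$, not just small $n$.
\end{enumerate}
For the first condition, $b_3\neq0$ whenever $n\ge3$ and $S_1\neq0$. For $n=2$, $S_1=0$ and the polynomial degenerates, so I will check $n=2$ separately (I expect to need care there). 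For the third condition, evaluating the cubic at $\phi=\pm1$ gives $\mp(S_0-S_1\mp 2P)\cdot(\text{const})$, which is nonzero for generic $\mathbf{x}$. For the second condition, it suffices to exhibit one data vector with three distinct admissible roots, for instance $\mathbf{x}$ with $P=0$. Then $b_2=b_0=0$, the roots are $0,\pm\sqrt{-b_1/b_3}$, and these are distinct and different from $\pm1$. Since all the conditions are Zariski-open in $\mathbf{x}$, they then hold generically.
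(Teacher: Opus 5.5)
Your derivation of the cubic matches the paper's. You profile out $\sigma^2$, use the tridiagonal precision matrix $Q=R^{-1}$ with $\det Q=1-\phi^2$, clear the denominators $(1-\phi^2)\,\mathbf{x}^\top Q\mathbf{x}$, and read off $b_3,\dots,b_0$; your coefficients are correct. Your genericity argument goes beyond the paper, which only says ``Cardano's formula''. It covers the leading coefficient, three distinct roots via a data vector with $P=0$, and exclusion of the spurious roots, and it is sound for $n\ge 3$. There is one slip: the cubic at $\phi=\pm1$ equals $\pm\,\mathbf{x}^\top Q(\pm1)\mathbf{x}=\pm(S_0+S_1\mp 2P)$, so $S_1$ enters with a plus sign, not as $S_0-S_1$. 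This is still generically nonzero, so nothing breaks.

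The two steps you postpone, however, cannot be completed, because the statement is wrong at exactly those points.

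\textbf{The case $n=2$.} Here the sum defining $S_1$ is empty and $n-2=0$, so $b_3=b_2=0$ identically. The score equation becomes the linear equation $(x_1^2+x_2^2)\phi=2x_1x_2$. This gives exactly one genuine critical point, so the ML degree is $1$, and the displayed formulas, which divide by $b_3$, are undefined. This agrees with the implicit value $1$ at $n=2$ in Table~\ref{tab:Ma1}, since for $n=2$ both AR($1$) and MA($1$) are dense in the $2\times 2$ symmetric Toeplitz matrices. The degree-$3$ claim holds only for $n\ge 3$.

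\textbf{The Cardano formula.} Cardano does not produce the displayed $\phi_k$. After your substitution $\phi=t-b_2/(3b_3)$ you get $t^3+pt+q=0$, and the roots are $\phi_k=-\frac{b_2}{3b_3}+\omega_k\sqrt[3]{U}+\omega_k^2\sqrt[3]{V}$ with $U,V=-\frac{q}{2}\pm\sqrt{-\Delta}$. The printed formula has two errors: a spurious factor $\frac{1}{3b_3}$, and $\sqrt{\Delta}$ in place of $\sqrt{-\Delta}$. Your own side condition detects this. For the printed $U,V$ one has $UV=\frac{q^2}{2}+\frac{p^3}{27}$, so no choice of cube roots has product $-p/3$ unless $(q/2)^2+(p/3)^3=0$. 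For example, take $n=3$ and $\mathbf{x}=(1,1,-1)$. The cubic is $-2\phi^3+6\phi$, with roots $0,\pm\sqrt{3}$, but the printed formula with real cube roots gives $0,\pm i\sqrt{3}/6$.

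The paper's proof has the same two omissions. What your argument actually proves is the corrected statement: $n\ge 3$, with the standard Cardano expressions.
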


\begin{proof}

Note that $\Sigma = \sigma^2 R$ and solving the score equation with respect to $\sigma^2$ gives $\sigma^2 = \frac{1}{n} \mathbf{x}^{\top} R^{-1} \mathbf{x}$, which substituted back into the log-likelihood yields (up to additive constant):
\begin{equation*}
    \ell(\phi) \propto -\log \det R - n \log\left( \mathbf{x}^{\top} R^{-1} \mathbf{x} \right).
\end{equation*}
Differentiating with respect to $\phi$, the score equation becomes
\begin{align}
    \frac{\partial \ell}{\partial \phi}(\phi)
    &= \operatorname{tr}\left(\frac{\partial R^{-1}}{\partial \phi} R \right)
    - n \frac{\mathbf{x}^{\top} \frac{\partial R^{-1}}{\partial \phi} \mathbf{x}}{\mathbf{x}^{\top} R^{-1} \mathbf{x}} = 0. \label{Eq:ScoreAR}
\end{align}
The correlation matrix $R$ has a compact form $ R_{ij} = \frac{\phi^{|i-j|}}{1 - \phi^2}$, with $i,j = 1, \dots, n,
$, namely:
\begin{equation*}
R = \frac{1}{1 - \phi^2}
\begin{pmatrix}
1 & \phi & \phi^2 & \cdots & \phi^{n-1} \\
\phi & 1 & \phi & \cdots & \phi^{n-2} \\
\phi^2 & \phi & 1 & \cdots & \phi^{n-3} \\
\vdots & \vdots & \vdots & \ddots & \vdots \\
\phi^{n-1} & \phi^{n-2} & \phi^{n-3} & \cdots & 1
\end{pmatrix}, \quad R^{-1} =
\begin{pmatrix}
1 & -\phi & 0 & \cdots & 0 \\
-\phi & 1 + \phi^2 & -\phi & \cdots & 0 \\
0 & -\phi & 1 + \phi^2 & \cdots & 0 \\
\vdots & \vdots & \vdots & \ddots & \vdots \\
0 & 0 & 0 & \cdots & 1
\end{pmatrix}.
\end{equation*}
Differentiating $R^{-1}$ with respect to $\phi$ and substituting into \eqref{Eq:ScoreAR}  yields:
\begin{equation*}
    -2\phi X^{\top} R^{-1}X - (1-\phi^2)nX^{\top}\frac{\partial R^{-1}}{\partial \phi}X = 0.
\end{equation*}

Expanding the quadratic forms, we obtain:
\begin{align*}
& -2\phi \Bigg(
\sum_{i=2}^{n-1} (1+\phi^2) X_i^2
- 2\phi \sum_{i=1}^{n-1} X_i X_{i+1}
+ X_1^2 + X_n^2
\Bigg) \\
& - n(1-\phi^2) \Bigg(
2\phi \sum_{i=2}^{n-1} X_i^2
- 2 \sum_{i=1}^{n-1} X_i X_{i+1}
\Bigg) = 0.
\end{align*}
Rearranging terms, with the defined $b_i$ coefficients, this leads to the cubic equation: 
\begin{equation*}
    b_3 \phi^3 + b_2 \phi^2 + b_1 \phi + b_0 = 0.
\end{equation*}
The expressions for the three solutions are given by Cardano's formula.
\end{proof}

\subsection{\texorpdfstring{AR($p$)}{AR(p)} Model}
Unlike moving average models, the algebraic complexity of an autoregressive process depends on the order $p$ rather than the sample size $n$. In particular, the ML degree is intrinsic to the model structure and remains constant regardless of the number of observations.  

We computed the ML degrees for several autoregressive processes of increasing order. The results, obtained using \cite{HomotopyContinuation.jl} are summarized in Table~\ref{tab:ml_degree_ar}.

\begin{table}[H]
    \centering
    \begin{tabular}{|c | c c c c c c|}
        \hline
        AR$(p)$ & 1 & 2 & 3 & 4 & 5 & 6 \\
        \hline
        ML degree & 3 & 7 & 17 & 41 & 190 & 239 \\
        \hline
    \end{tabular}
    \caption{ML degree for different AR($p$) models.}
    \label{tab:ml_degree_ar}
\end{table}

\subsection{Simulation Study of Parameter Estimation}\label{Estimation:AR}
In the statistical literature \cite{brockwell2009time,hamilton1994time,box2015time}, the most common approach for estimating autoregressive models assumes that the first observations, typically $X_1$ and $X_2$, are known and fixed. Under this assumption, the likelihood simplifies substantially, allowing for straightforward analytical or numerical estimation of the parameters.  
In contrast, our formulation treats all observations as random and derives the full likelihood without conditioning on the initial values, leading to a more general and exact estimation framework. Here, the precision matrix explicitly accounts for the boundary terms in the process definition.  
We evaluate this approach for the AR($1$) model using the same two estimation strategies as in Section~\ref{section:Estimation_MA_process}: \texttt{optim} and \texttt{HomotopyContinuation.jl} \cite{HomotopyContinuation.jl}. The resulting parameter estimates are summarized in Tables~\ref{tab:optim_estimation} and~\ref{tab:homotopy_estimation}.

\begin{table}[H]
\centering
\begin{tabular}{lcccc}
\toprule
 & True Value & Mean & Bias & Std Dev \\
\midrule
\(\phi\) & 0.5 & 0.3582 & -0.1418 & 0.0753 \\
\(\sigma^2\) & 1.0 & 0.8800 & -0.1201 & 0.2000 \\
\bottomrule
\end{tabular}
\caption{AR($1$) parameter estimation results using \texttt{optim} ($n = 15$).}
\label{tab:optim_estimation}
\end{table}

\begin{table}[H]
\centering
\begin{tabular}{lcccc}
\toprule
 & True Value & Mean & Bias & Std Dev \\
\midrule
\(\phi\) & 0.5 & 0.5604 & 0.0604 & 0.2703 \\
\(\sigma^2\) & 1.0 & 0.9265 & -0.0735 & 0.3939 \\
\bottomrule
\end{tabular}
\caption{AR($1$) parameter estimation results using \texttt{HomotopyContinuation.jl} ($n=15$).}
\label{tab:homotopy_estimation}
\end{table}

The results show a trade-off similar to that observed for the moving average models: the \texttt{HomotopyContinuation.jl} approach yields smaller bias for both $\phi$ and $\sigma^2$ compared to \texttt{optim}, but at the cost of substantially higher variability. These findings reinforce the conclusion that, despite its higher computational cost, the algebraic approach provides a more reliable benchmark for assessing the true maximum likelihood estimator.

\section*{Acknowledgments}

The authors are grateful to Benjamin Hollering and Bernd Sturmfels for helpful discussions. The authors acknowledge funding by the Deutsche Forschungsgemeinschaft (DFG, German Research Foundation) – CRC/TRR 388 ``Rough Analysis, Stochastic Dynamics and Related Fields'' – Project B01,  ID 516748464.

\bibliographystyle{alpha}
\bibliography{ref.bib}

\vspace{0.25cm}

\noindent{\bf Authors' addresses:}
\smallskip
\small 

\noindent Carlos Am\'{e}ndola,
Technische Universit\"at Berlin
\hfill {\tt amendola@math.tu-berlin.de}

\noindent Gabriel Riffo,
Technische Universit\"at Berlin
\hfill {\tt riffo@math.tu-berlin.de}

\end{document}